\documentclass[letterpaper,11pt,reqno]{amsart}
\usepackage[margin=1in]{geometry}
\usepackage{mathrsfs,xfrac} 
\usepackage[colorlinks=true,linkcolor=blue,citecolor=blue,urlcolor=blue]{hyperref} 
\usepackage{amsmath,amssymb,amsthm,amsfonts,amsbsy,latexsym,dsfont,color} 
\usepackage[foot]{amsaddr}
\usepackage{mathtools,microtype,needspace,booktabs}

\usepackage[T1]{fontenc}
\usepackage[utf8]{inputenc}
\usepackage{comment}

\usepackage[textsize=tiny]{todonotes}
\usepackage{regexpatch}
\makeatletter
\xpatchcmd{\@todo}{\setkeys{todonotes}{#1}}{\setkeys{todonotes}{inline,#1}}{}{}
\makeatother

\usepackage{enumitem}

\newenvironment{enumeratea}{\begin{enumerate}[label=\textup{\alph*)},leftmargin=2em,itemsep=1ex]}{\end{enumerate}}

\allowdisplaybreaks

\newtheorem{thm}{Theorem}[section]
\newtheorem{lem}[thm]{Lemma}
\newtheorem{cor}[thm]{Corollary}
\newtheorem{prop}[thm]{Proposition}

\theoremstyle{definition}

\newtheorem{rem}[thm]{Remark}

\renewcommand{\le}{\leqslant} 
\renewcommand{\ge}{\geqslant} 
\renewcommand{\leq}{\leqslant} 
\renewcommand{\geq}{\geqslant}

\newcommand{\ind}{\mathds{1}}
\newcommand{\eps}{\varepsilon}

\newcommand{\norm}[1]{\left\Vert#1\right\Vert}
\newcommand{\abs}[1]{\left\vert#1\right\vert}

\def\qed{ \hfill $\blacksquare$}  
\let\ga=\alpha \let\gb=\beta \let\gc=\gamma  
     \let\gl=\lambda           \let\gs=\sigma  
  
 \let\gD=\Delta  \let\gL=\Lambda

\newcommand{\cB}{\mathcal{B}}
\newcommand{\cD}{\mathcal{D}}\newcommand{\cE}{\mathcal{E}}\newcommand{\cF}{\mathcal{F}}
\newcommand{\cG}{\mathcal{G}}\newcommand{\cH}{\mathcal{H}}
\newcommand{\cK}{\mathcal{K}}

\newcommand{\cV}{\mathcal{V}}
\newcommand{\cZ}{\mathcal{Z}}  
\newcommand{\vone}{\mathbf{1}}

\newcommand{\sP}{\mathscr{P}}

\DeclareMathOperator{\E}{\mathds{E}}
\DeclareMathOperator{\pr}{\mathds{P}}

\DeclareMathOperator{\tr}{Tr} 
  
\DeclareMathOperator{\diag}{diag}

\DeclareMathOperator{\geom}{geom}
\DeclareMathOperator{\dist}{dist}

\DeclareMathOperator{\cen}{cent}

\title[Eigenvector Preferential Attachment Trees]{Persistent Hubs and Sharp Asymptotics in Eigenvector Preferential Attachment Trees}
\author[Dey]{Partha S.~Dey}
\address{Department of Mathematics, University of Illinois Urbana-Champaign}
\email{psdey@illinois.edu}
\date{\today}
\keywords{Preferential attachment, eigenvector centrality, persistent hub, random trees}

\begin{document}
\begin{abstract}
	We prove sharp almost-sure asymptotics for eigenvector preferential
	attachment trees in which each attachment uses eigenvector centrality
	with probability $\ga\in(0,1]$ and uniform selection with probability
	$1-\ga$. A unique persistent hub eventually has the maximum degree,
	which is asymptotic to $\gc^2t^{2/3}$, where
	$\gc=(9\ga/10)^{1/3}$. The Perron--Frobenius eigenvalue is
	asymptotic to $\gc t^{1/3}$, and the second-largest degree is
	asymptotic to $2\gc t^{1/3}$. The hub and its neighbors each carry
	asymptotically half of the squared Perron mass. We also identify the
	centrality profile by degree and an explicit mixture of geometric laws
	for the degree of a uniformly sampled vertex. A limiting distance profile
	quantifies the dandelion geometry, and vertices beyond the hub's
	neighbors have maximum degree of order at most $\log t$.
\end{abstract}

\maketitle

\begin{figure}[hbtp]
	\centering
    \vspace*{-1cm}
	\setlength{\fboxsep}{-0pt}
	\begin{minipage}[c]{.42\linewidth}
		\includegraphics[width=\linewidth]{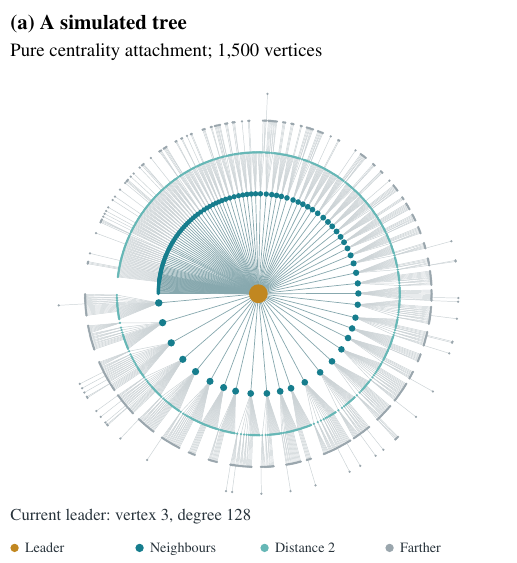}
	\end{minipage}
	\begin{minipage}[c]{.48\linewidth}
		\includegraphics[width=\linewidth]{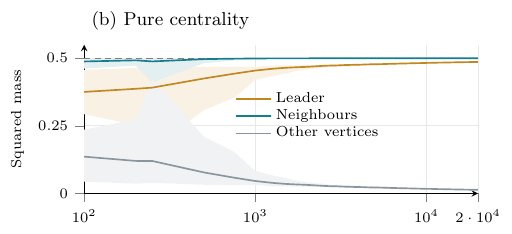}\\[-4pt]
		\includegraphics[width=\linewidth]{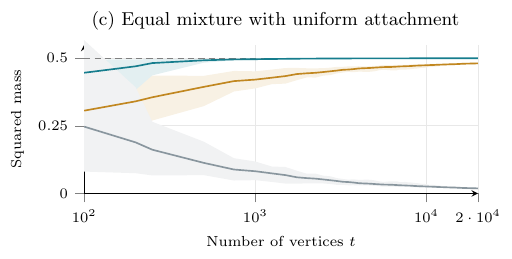}
	\end{minipage}
	\caption{
		(a) A simulated $1500$-vertex tree with $\ga=1$, colored by distance from its current degree leader.
		(b)--(c) Squared Perron mass on the current leader, its neighbors, and the remaining vertices for $\ga=1$ and $\ga=1/2$, respectively. Curves and shading show means and observed ranges of $12$ runs per regime; the shading is not a confidence interval. The predicted limits are $1/2$, $1/2$, and $0$. 
		}
	\label{fig:simulation}
\end{figure}

\section{Introduction}\label{sec:intro}
Classical preferential attachment favors vertices of high degree~\cite{BarabasiAlbert1999}, while eigenvector centrality rewards vertices whose neighbors are themselves central~\cite{Bonacich1972}. Adami, Ebadi, and Nattagh-Najafi~\cite{Adami2024} introduced an attachment rule based on eigenvector centrality and observed a \emph{dandelion} structure numerically. Their mean-field analysis predicts degree growth $t^{2/3}$ for the hub and $t^{1/3}$ for its neighbors. For a single-edge seed and pure centrality attachment, our model is their tree case. We prove almost-sure persistence and sharp asymptotics, allow arbitrary fixed tree seeds and a fixed uniform component, and identify an explicit limiting degree law together with its missing first moment. The general $m\ge 2$ case will be analyzed in an upcoming paper.

The dependence on the whole tree is the main obstacle. Vertices of the same degree can have different centrality scores, and even an attachment elsewhere changes their scores. Thus persistence results for degree-based rules~\cite{Galashin2016,BanerjeeBhamidi2021} and the branching-process approach to local attachment rules~\cite{RudasTothValko2007} do not apply directly. Persistence of other centrality notions has also been studied. Jog and Loh~\cite{JogLoh2018} prove persistence for centroid, or Jordan, centrality in growing trees, where the score of a vertex is the largest component size after deleting that vertex. Banerjee and Bhamidi~\cite{BanerjeeBhamidi2022} establish persistence of the optimal $K$ Jordan centers under general conditions on the degree-based attachment function. This centrality notion differs from the Perron score that drives the present dynamics.

Eigenvector localization in networks is a well-established phenomenon~\cite{MartinZhangNewman2014}. Here it emerges jointly with the growth dynamics --- a persistent hub carries half the squared Perron mass, although its normalized attachment weight tends to zero. The other half is spread over its neighbors. Our finite-core competition argument proves that the global reinforcement selects a single surviving hub. The subsequent shell balances determine the sharp constants and typical-vertex laws.

\subsection{Model}\label{ssec:model}
We start from a fixed connected seed tree on $[t_0]$, where $t_0\geq2$ is fixed.
For the tree $T_t$, let $\gl_t$ be its Perron--Frobenius
eigenvalue. Let $v(t)=(v_u(t):u\in[t])$ be its positive eigenvector
normalized by $\sum_{u=1}^t v_u(t)^2=1$. Define
$S_t:=\sum_{u=1}^t v_u(t)$. We fix 
\begin{align*}
\ga\in(0,1]\text{ and define }\gb:=1-\ga.
\end{align*}
At time $(t+1)$, attach the new vertex to the parent $J_{t+1}\in[t]$
with probability
\begin{align*}
	a_t(u)=\ga\cdot \frac{v_u(t)}{S_t}+\gb\cdot \frac{1}{t}.
\end{align*}
Let $d_u(t)$ be the degree of vertex $u$ in $T_t$, and let
$D^\star_t$ and $D^{\star\star}_t$ be the largest and second-largest
degrees, counting multiplicity. Throughout, define the constants
\begin{align*}
	\theta=2/3,\quad
	\gc
	=\left(9\ga/{10}\right)^{1/3}.
\end{align*}
For $b>0$, we write $\geom_b$ for the geometric probability mass function
$
	\geom_b(k)=b^{k-1}/(1+b)^{k}, k\geq1,
$
and define $\geom_0(k)=\ind_{\{k=1\}}$. This distribution is supported on
$\{1,2,\ldots\}$ and has mean $1+b$.

\subsection{Main result}\label{ssec:mainres}

For $k\geq1$, let $N_k(t)$ count the degree-$k$ vertices of $T_t$.
The following theorem collects the persistence, growth, localization,
and degree-distribution conclusions proved in Sections~\ref{sec:competition}
and~\ref{sec:sharp}.

\begin{thm}\label{thm:main}
Almost surely, the following conclusions hold.
\begin{enumeratea}
\item\label{main:a} \emph{(Persistence and growth.)} There is a unique random vertex $h$ that is the unique degree maximizer for all sufficiently large $t$, and as $t\to\infty$,
\begin{align*}
 \frac{D^\star_t}{\gl_t^2}\to1,\qquad
 \frac{D^{\star\star}_t}{2\gl_t}\to1,\qquad
 \frac{\gl_t}{\gc t^{1/3}}\to1.
\end{align*}
\item\label{main:b}  \emph{(Spectral localization and centrality.)} With
$\kappa_t=S_t^{-1}\sum_u v_u(t)^3$, as $t\to\infty$
\begin{align*}
 \frac{S_t}{\gl_t}\to\frac{1+\theta}{\sqrt2},\qquad
 \gl_t\kappa_t\to\frac1{2(1+\theta)},\qquad
 v_h(t)^2\to\frac12,\qquad
 \sum_{u\sim h}v_u(t)^2\to\frac12.
\end{align*}
For every fixed $k\geq1$,
\begin{align*}
A_{k}(t):= \frac1{S_t}\sum_{u:d_u(t)=k}v_u(t)
 \to \frac{\theta}{1+\theta}\geom_{\gb+\theta}(k).
\end{align*}
\item\label{main:c}  \emph{(Typical degree.)} For every fixed $k\geq1$,
\begin{align*}
 \frac1tN_k(t)\to \pi_k
 :=\frac{\ga}{1+\theta}\geom_{\gb+\theta}(k)
   +\frac{\gb+\theta}{1+\theta}\geom_\gb(k).
\end{align*}
The conditional law, given $T_t$, of the degree of an independent uniform vertex converges almost surely in total variation to $\pi$. Its unconditional law has the same limit.
\end{enumeratea}
\end{thm}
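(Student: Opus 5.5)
The proof splits into two stages: a competition stage that selects a unique persistent hub and establishes the scale $t^{1/3}$ for $\gl_t$, followed by a sharp stage in which shell balance equations, closed by stochastic approximation, fix all the constants.

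\textbf{Stage 1: structure of the eigenvector in a star-dominated tree.} The deterministic input is a perturbation lemma for trees of the following shape: one vertex $h$ of degree $D$, whose neighbors have degrees $d_u\ll D$, with the rest of the tree having bounded local spectral radius $o(\sqrt D)$. In such a tree,
\[
\gl^2=D+\sum_{u\sim h}(d_u-1)\frac{v_u}{\sum_{w\sim h}v_w}+\text{lower order}.
\]
The eigenvector equations then give $v_u\approx v_h/\gl$ for $u\sim h$, and $v_w\approx v_u/\gl$ one step further out. Consequently $v_h^2\approx\sum_{u\sim h}v_u^2\approx 1/2$, because $\sum_{u\sim h}v_u^2\approx Dv_h^2/\gl^2$ and $\gl^2\approx D$. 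The remaining quantities follow by summing over shells. A neighbor of degree $k$ has, to first order, $v_u\propto 1+(k-1)/\gl^2\cdot\gl$, so $v_u$ is linear in $k$. This yields $S_t\approx v_h+\sum_{u\sim h}v_u+\dots$, which is of order $\gl\,v_h$, and $\kappa_t$ is dominated by $v_h^3/S_t$.

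\textbf{Stage 2: competition and persistence.} I would argue as in the paper's finite-core competition argument. The hub gains degree at rate $\ga v_h/S_t\asymp 1/\gl$, while a neighbor gains at rate $\asymp 1/\gl^2$ and a non-neighbor at even lower rates. If two candidates $h,h'$ both carried degree of order $\gl^2$, the eigenvector would concentrate on the one with larger $D$. Indeed, $\gl$ is governed by $\max(D_h,D_{h'})$ up to exponentially small interaction, since distance is at least $2$ and the mixing term is of order $D^{-1/2}$. The ratio of their attachment probabilities therefore behaves like $v_h/v_{h'}$, which grows with the degree gap. This positive feedback gives a.s. separation, via a Pólya-urn-with-reinforcement or Borel--Cantelli argument on the degree difference. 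Combined with the fact that only finitely many vertices ever reach degree $\ge\gl$, which comes from the low attachment rates outside the core, this yields a unique persistent $h$.

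Next, $\frac{d}{dt}D_h\approx\ga v_h/S_t$, and I would show that $S_t/\gl_t\to c$. Then $\frac{d}{dt}\gl^2\approx \ga/(c\gl)$, which gives $\gl^3\sim \frac{3\ga}{2c}t$. Matching $\gc^3=9\ga/10$ requires $c=5/(3\sqrt2)=(1+\theta)/\sqrt2$. That value is exactly what the shell computation produces once the neighbor degree profile is known, and it is consistent with $\theta=2/3$ being the exponent of $D$. The second-largest degree is the largest neighbor degree. The earliest neighbors gain at rate $\ga v_u/S_t\approx \ga/(c\gl^2)$, and integrating this in $t$ gives growth $\asymp t^{1/3}$; the constant $2$ in $D^{\star\star}\sim 2\gl$ comes from the same integral.

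\textbf{Stage 3: degree profiles and the main obstacle.} For $A_k$ and $N_k$, I would write the stochastic-approximation recursions
\[
N_k(t+1)-N_k(t)\approx \ga\bigl(A_{k-1}-A_k\bigr)+\frac{\gb}{t}\bigl(N_{k-1}-N_k\bigr)+\ind_{k=1}.
\]
The analogous recursion for $S_tA_k$ involves the fact that a degree-$k$ neighbor has mass $\propto k\,v_h/\gl$ and that newborn leaves contribute mass. Because $S_t$ grows like $t^{1/3}$, the rate $\theta$ enters through the normalization. Solving the resulting fixed-point system gives the geometric mixtures stated in the theorem. Almost-sure convergence then follows from standard ODE-method arguments with martingale error bounds.

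Total-variation convergence for a uniform vertex follows from pointwise convergence together with $\sum_k\pi_k=1$, by Scheffé's lemma. The unconditional statement follows by bounded convergence.

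I expect the main obstacle to be Stage 2. There I must control the exponentially small cross-interaction of the eigenvector between competing high-degree vertices uniformly in time. I must also rule out an infinite sequence of challengers, and I would handle this with a union bound over birth times using the rate $\gl^{-2}\asymp t^{-2/3}$.
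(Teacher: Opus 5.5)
Your overall architecture matches the paper's: finite-core competition, then shell balances, then the exact $N_k$ recursion and Scheff\'e. However, the persistence step, which you flag as the main obstacle, is asserted rather than proved, and your heuristic for it misidentifies the difficulty.

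\textbf{Persistence.} Core candidates may be adjacent, and for nonadjacent ones the effective coupling is polynomially, not exponentially, small. In any case, small coupling is harmless; the real issue is a lasting near-tie. After eliminating the exterior, the candidates see $\cH_t(\gl_t)=\diag(d_i(t)/\gl_t)+O(1)$. The $O(1)$ terms (adjacency, and second-shell sizes divided by $\gl_t^3$) are of the same order as a degree gap of size $O(\gl_t)$. Inside that window the Perron law is genuinely shared, and the drift of the degree gap need not have the sign of the gap. At exact effective balance the mean drift vanishes. So ``positive feedback plus Borel--Cantelli on the degree difference'' does not exclude lingering near this unstable balance forever.

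The paper supplies three ingredients you lack. Write $I$ for the active set, $r=|I|$, and $z_t$ for the candidates' Perron weights normalized to sum one.
\begin{enumeratea}
\item Degree spreads above $K\gl_t$ have outward drift and return with summable probability. Hence coexistence forces $\norm{\cen(d_i(t):i\in I)}=O(\gl_t)$.
\item A Perron-repulsion identity shows that selection pushes the exact row-sum imbalance $Y_t=\cen(\cH_t(\gl_t)\vone)$ outward: $Y_t\cdot(z_t-r^{-1}\vone)\geq c\norm{Y_t}^2$, uniformly even as couplings vanish.
\item The noise bound $\E_t\norm{\gD Y_t}^2\gtrsim\gl_t^{-3}$ makes $\norm{Y_t}^2$ have drift $\gtrsim1/t$ even at $Y_t=0$. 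A bounded stopped version of this statistic must therefore exit.
\end{enumeratea}
This argument also covers active sets of three or more candidates, which your two-candidate picture ignores.

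\textbf{Circular a priori bounds.} Your plan uses $\gl_t\asymp t^{1/3}$ and an exterior rate $\gl_t^{-2}$ before any structure is known. That rate comes from a cavity bound that presupposes exterior degrees at most $\eps\gl_t^2$. The paper breaks this circle with a priori bounds: $D^\star_t=O(t^{2/3})$ via $\gl_tS_t=\sum_ud_u(t)v_u(t)$, and $\gl_t^2=\Omega(t^{2/7})\gg t^{1/4}$. It then uses an exponential comparison with the core ancestor at the first crossing of $\eps\gl_t^2$. The threshold must be of order $\gl_t^2$. Your claim that only finitely many vertices ever reach degree $\gl_t$ is false: every fixed hub neighbor ends with degree $\sim2\gl_t$ (the very mechanism behind $D^{\star\star}_t\sim2\gl_t$), and the hub has infinitely many neighbors.

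\textbf{The sharp stage.} There is a concrete error here and a circular step.

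First, the error. For a hub neighbor of degree $k$, the eigenvector equation gives $v_u=v_h/\bigl(\gl-(k-1)/\gl+\cdots\bigr)=(v_h/\gl)\bigl(1+O(k/\gl^2)\bigr)$. Since $k=O(\gl)$, all first-shell coordinates equal $(v_h/\gl)(1+O(\gl^{-1}))$. They are not proportional to $1+(k-1)/\gl$, and certainly not to $k$. Moreover, fixed-degree first-shell vertices contribute nothing to $\lim_tA_k(t)$: only $o(\gl_t^2)$ of them have degree $k$, each with $v_u/S_t=O(\gl_t^{-2})$.

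The profile lives on the second shell. There every coordinate is $v_h\gl_t^{-2}(1+O(\gl_t^{-1}))$, so each such vertex is selected with probability $(\gb+\theta+o(1))/t$. The degree-$k$ second-shell counts $B_k(t)$ then satisfy a linear recursion with $B_k(t)/t\to\frac{\ga}{1+\theta}\geom_{\gb+\theta}(k)$. Finally, $A_k(t)\approx B_k(t)v_h(t)/(\gl_t^2S_t)$.

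Second, the circular step. You obtain $S_t/\gl_t\to(1+\theta)/\sqrt2$ by matching the stated $\gc$ rather than deriving it. Your drift $\ga/(c\gl)$ also drops the factor $v_h\to2^{-1/2}$, so with your $c$ it would give $\gl_t^3\sim(9\sqrt2/10)\ga t$.

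The missing idea is the coupled shell balance. Let $H_t=d_h(t)$, let $B_t$ be the second-shell size, and let $X_t=S_t/(v_h(t)\gl_t)$. Then $X_t=1+B_t/\gl_t^3+o(1)$, while $H_t^{3/2}$ and $B_t/\theta$ have the same drift $\ga/(\theta X_t)+o(1)$. Their difference is therefore $o(t)$ without knowing $X_t$. This forces $B_t/H_t^{3/2}\to\theta$, then $X_t\to1+\theta$, and finally $\gl_t^3/t\to\ga/(\theta(1+\theta))=9\ga/10$.
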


\begin{proof}[Proof of Theorem~\ref{thm:main}]
Proposition~\ref{prop:scales} gives the matching growth scales.
Theorems~\ref{thm:hub},~\ref{thm:sharp}, and~\ref{thm:second} yield part~\ref{main:a};
Theorems~\ref{thm:sharp} and~\ref{thm:profile} give part~\ref{main:b};
Theorem~\ref{thm:degree} gives part~\ref{main:c}.
\end{proof}

The hub's normalized centrality $v_h(t)/S_t$ tends to zero despite
$v_h(t)^2\to1/2$. Vertices at distances one and two from the hub carry
normalized centrality converging to $1/(1+\theta)$ and
$\theta/(1+\theta)$, respectively.
These are fractions of the $\ell_1$ mass, whereas the localization
statement in Theorem~\ref{thm:main} concerns squared $\ell_2$ mass.
The mean of $\pi$ is $2-\ga/(1+\theta)$, whereas the mean degree of $T_t$ tends to $2$. The missing first moment is carried collectively by the vanishing fraction of vertices adjacent to the hub. The hub itself has sublinear degree, and it receives a vanishing fraction of all edges. Corollaries~\ref{cor:distance} and~\ref{cor:hierarchy} further describe the geometry. They give a limiting distance profile and show that degrees beyond the hub's neighbors are $O(\log t)$.

\begin{rem}[Pure centrality attachment]
When $\ga=1$, the typical degree law becomes
\begin{align*}
 \pi_1=\frac{19}{25},\quad
 \pi_k=\frac9{25}\left(\frac25\right)^{k-1}\text{ for } k\geq2,\text{ with mean }
 \sum_{k\geq1}k\pi_k=\frac75.
\end{align*}
The missing first moment is $3/5$, carried by the hub's neighbors as described in the proof.
\end{rem}

In this article, all results concern fixed $\ga>0$. At $\ga=0$ the model is uniform attachment, which is outside the scope of the growth and persistence assertions. Although the formula for $\pi_k$ has the limiting value $2^{-k}$, we do not prove estimates uniform in $\ga\downarrow0$.

Figure~\ref{fig:simulation} uses $12$ runs from a single edge for each of $\ga=1$ and $\ga=1/2$, up to time $20000$. All $24$ degree leaders were unique and unchanged from $t=631$ through $20000$. These finite simulations
illustrate the hub picture. Eventual persistence is established by the proof below.

\subsection{Heuristics and proof strategy}\label{ssec:heu}

The following scaling argument motivates the exponent $\theta=2/3$ fixed above. 
First, we recall that for a star graph with hub degree $d$, the Perron-Frobenius eigenvalue is $\sqrt{d}$ with unit eigenvector $(1/\sqrt 2, 1/\sqrt{2d},\ldots, 1/\sqrt{2d})$. Its $\ell_1$ norm is $(1+\sqrt d)/\sqrt2$. In particular, the squared Perron mass of the hub is $1/2$, and of its neighbors is also $1/2$. More generally, the unit positive Perron vector of any connected
bipartite graph has squared mass $1/2$ on each of its two parts.

Now, for the eigenvector preferential attachment tree model, if the hub degree $H$ has scale $t^\vartheta$,
the star-like geometry to be proved suggests that $\gl$ and $S$
have order $\sqrt H$, while the hub coordinate stays of constant
order. Its centrality attachment probability is then of order
$H^{-1/2}\asymp t^{-\vartheta/2}$. Summing over time gives the degree
scale $t^{1-\vartheta/2}$; the uniform component contributes only
logarithmically. Matching powers gives
$
	\vartheta=1-{\vartheta}/{2},
	\text{ hence } \vartheta=\theta=2/3.
$
A fixed neighbor of the hub has an attachment rate of order
$H^{-1}\asymp t^{-\theta}$, giving degree scale
$t^{1-\theta}=t^{\theta/2}$.

Births into the second shell occur at asymptotically $\gl$ times the
hub attachment rate. A hub attachment increases $H^{1/\theta}$
asymptotically by $\gl/\theta$. Comparing these drifts and controlling
martingale errors gives second shell size
$\sim\theta H^{1/\theta}\sim\theta\gl^3$. The first two shells contribute asymptotically $1/(1+\theta)$ and
$\theta/(1+\theta)$ of $S$, which gives hub attachment rate
$\ga/((1+\theta)\gl)$. The drift of $H^{1/\theta}$ then yields
\begin{align*}
	S\sim\frac{1+\theta}{\sqrt2}\gl,\quad
	\gl^3\sim\frac{\ga t}{\theta(1+\theta)}.
\end{align*}
Section~\ref{sec:sharp} makes these balances rigorous, and
Remark~\ref{rem:constants} summarizes the resulting constants.

The first hurdle is to establish this geometry without assuming a hub.
Using spectral and moment estimates, we first obtain polynomial upper and lower bounds for the maximum degree $D^\star_t$ and Perron eigenvalue $\gl_t$, and confine vertices with degrees of order $\gl^2$
to a finite old core. Until any exterior degree reaches a small fraction
of $\gl^2$, late vertices have much smaller centrality than their
core ancestors. An exponential comparison controls the first crossing.

Several vertices in the core may still compete for dominance.
Surviving competitors have equal leading degrees, and outward drift
confines lasting near ties to a window of width $O(\gl)$.
Eliminating all other vertices gives a finite symmetric matrix whose centered row sums measure the competitors' imbalance. Before exit from fixed deterministic bounds, Perron reinforcement and attachment noise make the conditional drift of their squared norm at least a positive constant times $1/t$. Permanent coexistence would contradict boundedness of the stopped
statistic because $\sum_t1/t$ diverges.
Applying this argument to all finite candidate sets leaves one persistent hub.

Uniform shell estimates then give the sharp second-largest degree.
The selection rates $(1+\theta-\ga+o(1))/t$ in the second shell
and $(1-\ga+o(1))/t$ farther away yield the geometric mixture through
linear degree-count recursions.

\subsection{Notation and probability conventions}
In addition to the notation above, define $M_t:=\max_u v_u(t)$.
Define $\geom_b(0)=N_0(t)=A_0(t)=0$. We also use the constants
\begin{align*}
	q={\ga}/{(1+\theta)},\quad b_2=\gb+\theta.
\end{align*}
Write $\cF_t=\gs(T_{t_0},J_{t_0+1},\ldots,J_t)$ for the graph
history and $\E_t=\E(\,\cdot\mid\cF_t)$. For a process $X_t$,
write $\gD X_t=X_{t+1}-X_t$. Unless a different lower limit is
specified, $\sum_{s<t}$ means $\sum_{s=t_0}^{t-1}$.
Here, $x_t\asymp y_t$ (equivalently, $x_t=\Theta(y_t)$) means that
$x_t/y_t$ is eventually bounded above and below by positive constants.
$x_t\gtrsim y_t$ (equivalently, $x_t=\Omega(y_t)$) means an eventual
positive lower bound.
An eventual bound with random constants is always used by fixing a
deterministic finite label set, a starting time, and rational constants,
and stopping at the first failure of the corresponding adapted bounds.
Martingale estimates are proved for these stopped processes. Only then
do we take the countable union of events on which no failure occurs.
Transition laws are never conditioned on a future successful-core or
hub event. All growth bounds below refer to almost-sure paths. Their constants may be random until localized and may depend on $\ga$. We use the following elementary martingale normalization repeatedly, which follows from martingale convergence and Kronecker's lemma~\cite[Chapters~11--12]{Williams1991}.

\begin{lem}\label{lem:martingale-normalization}
Let $(\xi_{t+1})$ be a square-integrable martingale difference sequence adapted to $(\cF_t)$, and let $b_t>0$ be deterministic, nondecreasing, and tend to infinity. If
$
 \sum_{t\geq t_0} {\E\xi_{t+1}^2}/{b_t^2}<\infty,
$
then $\sum_{s<t}\xi_{s+1}=o(b_t)$ almost surely.
\end{lem}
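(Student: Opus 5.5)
The argument will combine the $L^2$ martingale convergence theorem with Kronecker's lemma. Set
\[
M'_t:=\sum_{s<t}\frac{\xi_{s+1}}{b_s},
\]
where the $s$-th summand uses the deterministic weight $b_s$. Because $b_s$ is deterministic and positive, each increment $\xi_{s+1}/b_s$ is again a square-integrable martingale difference with respect to $(\cF_s)$. Orthogonality of martingale increments in $L^2$ gives
\[
\E (M'_t)^2=\sum_{s<t}\frac{\E\xi_{s+1}^2}{b_s^2}.
\]
By hypothesis this sum is bounded uniformly in $t$. Hence $(M'_t)$ is an $L^2$-bounded martingale. By the martingale convergence theorem \cite[Chapter~12]{Williams1991}, it converges almost surely, and in $L^2$, to a finite limit $M'_\infty$.

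Next I apply Kronecker's lemma pathwise on the almost-sure event where $M'_t$ converges. The lemma says that if $0<b_s\uparrow\infty$ and $\sum_s x_s/b_s$ converges, then $b_t^{-1}\sum_{s<t}x_s\to0$. The proof is summation by parts. Write $x_s=b_s(M'_{s+1}-M'_s)$; then
\[
\frac1{b_t}\sum_{s<t}x_s=M'_t-\frac{1}{b_t}\sum_{s<t}(b_s-b_{s-1})M'_s+\text{(boundary term from $t_0$)}.
\]
Monotonicity of $b$ makes the weights $(b_s-b_{s-1})/b_t$ nonnegative, with total mass $\to1$. A Cesàro (Toeplitz) argument then shows that the weighted average converges to $M'_\infty$, so the difference tends to $0$. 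The boundary term is $O(1/b_t)\to0$. Taking $x_s=\xi_{s+1}$ gives $\sum_{s<t}\xi_{s+1}=o(b_t)$ almost surely.

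There is no real obstacle here. The only points to check are indexing and weights. The weight attached to $\xi_{s+1}$ must be $b_s$, matching the hypothesis, which is why $b$ is only required to be deterministic rather than predictable. Nondecreasing monotonicity is used exactly in the Toeplitz step. Divergence $b_t\to\infty$ is used to kill both the boundary term and the finitely many initial terms.
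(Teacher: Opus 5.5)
Your proposal is correct and follows the paper's route exactly: the paper just invokes $L^2$ martingale convergence for $\sum_s\xi_{s+1}/b_s$ followed by Kronecker's lemma. One small indexing repair is needed in your summation-by-parts display. The correct identity is
$$\frac1{b_t}\sum_{s<t}\xi_{s+1}=\frac{b_{t-1}}{b_t}M'_t-\frac1{b_t}\sum_{t_0<s<t}(b_s-b_{s-1})M'_s,$$
equivalently the weighted sum should run over $t_0<s\le t$ with leading term $M'_t$. As written with $s<t$, the weights have total mass $(b_{t-1}-b_{t_0})/b_t$, which need not tend to $1$ if $b$ jumps. Alternatively, divide by $b_{t-1}$ to get $o(b_{t-1})$ and then use $b_{t-1}\le b_t$.
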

The quantities $v_u(t)/S_t$ and $A_k(t)$ always refer to normalized
$\ell_1$ centrality. Squared Perron mass means a sum of $v_u(t)^2$.

\subsection{Organization of the paper}
Section~\ref{sec:growth} proves the initial upper and lower bounds without
assuming a hub. Section~\ref{sec:core} constructs a finite old core and
upgrades these bounds to matching growth scales in
Proposition~\ref{prop:scales}. Section~\ref{sec:competition} resolves the
competition within that core and proves the persistent-hub theorem.
Section~\ref{sec:sharp} derives the sharp constants, centrality profile,
uniform-vertex degree law, and second-largest-degree asymptotic, and
proves the distance-profile and degree-hierarchy corollaries.

\subsection*{Declaration on the use of AI}
AI tools were used at the final revision stage to assist with proofreading, grammar checking, and reviewing the mathematical arguments.  All AI suggestions incorporated into the manuscript were reviewed by the author, who takes full responsibility for the content of the paper.

\subsection*{Acknowledgments.} The author thanks Neeladri Maitra and Jaya M.~Manthripragada for many enlightening discussions.

\section{Spectral and initial growth estimates}\label{sec:growth}

We first state and prove several classical spectral estimates for trees. We use these results to obtain initial upper and lower bounds for the maximum degree in our model. These first estimates hold almost surely and do not assume localization or persistence. Proposition~\ref{prop:upper} gives $D^\star_t=O(t^{2/3})$. Hence, $\gl_t=O(t^{1/3})$ by Lemma~\ref{lem:spectral}. This upper bound makes the predictable
martingale gates used below eventually equal to one. On the gates,
conditional variances have deterministic bounds that become summable
after the indicated normalization.

Proposition~\ref{prop:bootstrap} supplies the initial lower bound
$\gl_t^2=\Omega(t^{2/7})$. This exponent is sufficient because
$2/7>1/4$, so $\gl_t^2\gg t^{1/4}$ eventually, even when the
positive comparison constant is random. At a first crossing of the
exterior-degree barrier $\eps\gl_t^2$, the exponential estimate in
Proposition~\ref{prop:finite-core} then has a summable error probability.
This constructs the finite old core. Its positive Perron mass lets
Proposition~\ref{prop:scales} improve the initial bounds to
$D^\star_t=\Theta(t^{2/3})$ and
$\gl_t,S_t=\Theta(t^{1/3})$. These matching scales are established
before the competition argument and supply its growth and variance
estimates.

\begin{lem}\label{lem:spectral}
	For every tree on at least three vertices, write $d_u$ for its vertex
	degrees. Let $v$ be its unit positive Perron vector, $\gl$ its
	Perron eigenvalue, $D^\star$ its maximum degree, and $S=\sum_u v_u$. Then, we have
	\begin{align*}
		\gl S=\sum_u d_uv_u,\quad \sqrt{D^\star}\leq\gl
		\leq
        2\sqrt{D^\star-1},
        \quad \max_u v_u\leq2^{-1/2}.
	\end{align*}
	With $\kappa=S^{-1}\sum_u v_u^3$, we have $\kappa\geq1/S^2$ and
	\begin{align}\label{eq:kappa-tree}
		\kappa\geq\frac{\gl}{4S^2}
		\geq\frac{\gl^3}{4Q},\quad Q=\sum_u d_u^2.
	\end{align}
\end{lem}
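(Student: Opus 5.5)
The identities and the degree bounds are classical, and I would dispose of them first. For $\gl S=\sum_u d_uv_u$, I would sum the eigen-equation $\gl v_u=\sum_{w\sim u}v_w$ over $u$; each $v_w$ then appears exactly $d_w$ times.

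For $\sqrt{D^\star}\le\gl$, I would note that the tree contains a star $K_{1,D^\star}$ as a subgraph. Perron eigenvalues are monotone under taking subgraphs, by the Rayleigh quotient with nonnegative test vectors, and the star has eigenvalue $\sqrt{D^\star}$.

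For the upper bound I would root the tree at a leaf $r$, which exists since the tree has at least three vertices, and write $p(u)$ for the parent of $u\ne r$. Then $\gl=2\sum_{u\ne r}v_uv_{p(u)}$. With $c=\sqrt{D^\star-1}$, each term satisfies $2v_uv_{p(u)}\le c\,v_u^2+c^{-1}v_{p(u)}^2$. Collecting terms vertex by vertex:
\begin{itemize}
\item a non-root vertex gets $c\,v_u^2$ as a child, plus at most $(D^\star-1)c^{-1}v_u^2$ as a parent;
\item the root gets only $c^{-1}v_r^2$, since it has one child.
\end{itemize}
So every coefficient is at most $2c$, which gives $\gl\le2\sqrt{D^\star-1}$.

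For $\max_uv_u\le2^{-1/2}$, I would use bipartiteness. Flipping the sign of $v$ on one colour class gives an eigenvector for $-\gl$. This vector is orthogonal to $v$, so each class carries squared mass exactly $1/2$. Since every coordinate belongs to one class, $v_u^2\le1/2$.

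For the $\kappa$ bounds, $\kappa\ge1/S^2$ follows from Cauchy--Schwarz:
\begin{align*}
1=\sum_u v_u^{3/2}v_u^{1/2}\le\Bigl(\sum_u v_u^3\Bigr)^{1/2}S^{1/2},
\end{align*}
so $\sum_u v_u^3\ge1/S$. The second inequality in \eqref{eq:kappa-tree} is equivalent to $\gl S\le\sqrt Q$. This follows from $\gl S=\sum_ud_uv_u\le(\sum_ud_u^2)^{1/2}(\sum_uv_u^2)^{1/2}$.

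The main obstacle is the inequality $S\sum_uv_u^3\ge\gl/4$. Naive Cauchy--Schwarz or H\"older arguments on edge sums only reproduce $S\sum_u v_u^3\ge1$ and lose the factor $\gl$. I would instead orient each edge towards the endpoint with the larger coordinate, breaking ties arbitrarily. For each $u$, let $C(u)$ be the set of neighbours whose edges are oriented into $u$, and set $\gs_u=\sum_{w\in C(u)}v_w$. Every edge is counted once, so:
\begin{itemize}
\item $\gl=2\sum_u v_u\gs_u$;
\item $\sum_u\gs_u\le S$, since each vertex lies in at most one set $C(u)$ per edge and its coordinate is counted once per edge it loses;
\item by the eigen-equation, $\gs_u\le\gl v_u$.
\end{itemize}
One caveat needs checking: a vertex may lose several edges, in which case $\sum_u\gs_u$ can exceed $S$. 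If so, I would refine the orientation, for instance by rooting the tree and assigning each edge to the parent when the parent is larger, so that each vertex is charged at most once. This bookkeeping is exactly where I expect to have to be careful. Granting $\sum_u\gs_u\lesssim S$, I would use
\begin{align*}
v_u\gs_u=v_u\min(\gl v_u,\gs_u)\le v_u\sqrt{\gl v_u\gs_u},
\end{align*}
and then Cauchy--Schwarz:
\begin{align*}
\gl\le2\sqrt\gl\sum_u v_u^{3/2}\gs_u^{1/2}\le2\sqrt\gl\Bigl(\sum_uv_u^3\Bigr)^{1/2}S^{1/2}.
\end{align*}
This yields $S\sum_uv_u^3\ge\gl/4$, that is, $\kappa\ge\gl/(4S^2)$. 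The constant $4$ leaves room to absorb a factor lost in the charging step.
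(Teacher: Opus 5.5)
Everything except the inequality $S\sum_u v_u^3\ge\lambda/4$ is correct. Several of your routes differ from the paper's: subgraph monotonicity in place of $\sqrt{D^\star}=\|Ae_h\|\le\|A\|$, a leaf as root, and the bipartite sign flip in place of the vertexwise Cauchy--Schwarz $\sum_{w\sim u}v_w^2\ge\lambda^2v_u^2/d_u\ge v_u^2$. All of these are valid.

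The gap is at the crux. You leave $\sum_u\sigma_u\le S$ unproved, fall back on ``$\lesssim S$'', and assert that the constant $4$ can absorb a lost factor. It cannot. If $\sum_u\sigma_u\le KS$, your chain gives exactly $\lambda\le 4K\,S\sum_uv_u^3$, which for $K>1$ is strictly weaker than the lemma. Moreover, $4$ is asymptotically sharp. For the star $K_{1,d}$ the hub coordinate is $2^{-1/2}$, the leaf coordinates are $(2d)^{-1/2}$, and $\lambda=\sqrt d$, so $S\sum_uv_u^3=(\sqrt d+2+1/\sqrt d)/4=\lambda/4+O(1)$. Your proposed refinement does not repair this either. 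If the remaining edges go to the child, it is just the larger-endpoint orientation described relative to a root, which still allows multiple charges. If those edges are dropped instead, the identity $\lambda=2\sum_uv_u\sigma_u$ fails.

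The missing observation is short. The comparison of coordinates is never used in your chain. You need only three things: each edge is charged to exactly one endpoint, $\sigma_u\le\lambda v_u$ (true for any subset of neighbours, by the eigen-equation), and each vertex is charged at most once. So root the tree anywhere and let $C(p)$ be all children of $p$, regardless of their sizes. Then $\sum_p\sigma_p=S-v_{\mathrm{root}}\le S$ exactly, and your chain yields the constant $4$. This is precisely the paper's proof. There it is written as the edge Cauchy--Schwarz inequality $\bigl(\sum_{p\to c}v_pv_c\bigr)^2\le\bigl(\sum_{p\to c}v_p^2v_c\bigr)\bigl(\sum_{p\to c}v_c\bigr)\le\lambda S\sum_pv_p^3$, together with $\sum_{p\to c}v_pv_c=\lambda/2$.

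Your worry about the original orientation was in fact unfounded. With that orientation, $\sum_u\sigma_u=\sum_{\{a,b\}\in E}\min(v_a,v_b)$. For any choice of root, each edge's minimum is at most the coordinate of its child endpoint, so $\sum_u\sigma_u\le\sum_{c\ne\mathrm{root}}v_c\le S$. A vertex that loses several edges is always compensated elsewhere.
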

\begin{proof}
	Summing the coordinates of $Av=\gl v$ gives the first identity. If $h$ has maximum degree, then
	$\sqrt{D^\star}=\norm{Ae_h}\leq\norm A=\gl$, which gives the lower bound. For the upper bound,
	root the tree and define $b=\sqrt{D^\star-1}$. On an oriented edge $p\to c$,
	$2\abs{x_px_c}\leq x_p^2/b+bx_c^2$. Summing over all oriented edges gives at most $2b\sum_u x_u^2$. Here, we used the fact that a nonroot has at most $b^2$ children, and the root
	coefficient is $D^\star/b\leq2b$. The Rayleigh principle gives the upper bound of $2b$.
	At any vertex $u$, the Cauchy--Schwarz inequality gives
	$\sum_{w\sim u}v_w^2\geq\gl^2v_u^2/d_u\geq v_u^2$.
	Normalization gives $2v_u^2\leq1$.

	The Cauchy--Schwarz inequality also gives
	$1=\sum_u v_u^{1/2}v_u^{3/2}\leq(S\sum_u v_u^3)^{1/2}$.
	For the stronger tree inequality, orient edges from parent to child to obtain
	\begin{align*}
		\frac{\gl^2}{4}
		=\left(\sum_{p\to c}v_pv_c\right)^2
		\leq\left(\sum_{p\to c}v_p^2v_c\right)
		\left(\sum_{p\to c}v_c\right)
		\leq\gl S\sum_p v_p^3.
	\end{align*}
	Finally, $\gl S\leq\sqrt Q$, proving~\eqref{eq:kappa-tree}.
	We will also use the forest consequence $\norm{A_F}\leq2\sqrt{\gD_F}$,
	where $\gD_F$ is its maximum degree. Apply the tree bound to each component with at least three vertices; a single edge has norm $1$, and an isolated vertex has norm $0$.
\end{proof}

\begin{lem}[Eigenvalue update in one step]\label{lem:pendant}
	If a new leaf is attached at $u$ in a connected bipartite graph
	on at least two vertices, then
	$
		2v_u^2\leq\gl'^2-\gl^2\leq1,
	$
    where $\gl,\gl'$ are the Perron-Frobenius eigenvalues of the old and new graph, respectively.
	Consequently, in the growth process, we have
	\begin{align}\label{eq:cubic-increment}
		3\gl_t v_{J_{t+1}}(t)^2
		\leq\gl_{t+1}^3-\gl_t^3
		\leq\tfrac32\sqrt{\gl_t^2+1}.
	\end{align}
\end{lem}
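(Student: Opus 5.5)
The plan is to prove the spectral inequality by a two-sided Rayleigh-quotient comparison, and then read off the cubic increments from it by elementary algebra. Let $A$ be the adjacency matrix of the old graph $G$ on $n\geq2$ vertices, and $A'$ that of $G'$, obtained by adding a leaf $w$ at $u$.

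\emph{Lower bound.} Since $G'$ is again connected and bipartite, $\gl'^2$ is the top eigenvalue of $A'^2$, so $\gl'^2\geq\langle x,A'^2x\rangle=\norm{A'x}^2$ for every unit vector $x$. I take $x=(v,0)$, extending the old Perron vector by zero at $w$. Then $A'x=(\gl v,\,v_u)$, since the new row of $w$ picks up only $v_u$. Hence $\gl'^2\geq\gl^2+v_u^2$, which is only half of what is needed. To reach $2v_u^2$ I would instead test with $x_\eps=(v,\eps)/\sqrt{1+\eps^2}$ in the ordinary Rayleigh quotient. This gives
\begin{align*}
\gl'\geq\frac{\gl+2\eps v_u}{1+\eps^2}.
\end{align*}
Optimizing at $\eps\approx v_u/\gl$ yields only $\gl'\geq\gl+v_u^2/\gl$ to first order, so $\gl'^2\geq\gl^2+2v_u^2$ up to higher-order terms. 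To get the exact inequality, I would use the exact characteristic relation for a pendant edge instead. The Perron vector $v'$ of $G'$ satisfies $v'_w=v'_u/\gl'$. Restricted to $G$, it therefore solves $(A+\gl'^{-1}e_ue_u^\top)y=\gl'y$. Thus $\gl'$ is the top eigenvalue of $A+\gl'^{-1}e_ue_u^\top$, and testing with $v$ gives $\gl'\geq\gl+v_u^2/\gl'$. Multiplying by $\gl'$ gives $\gl'^2\geq\gl\gl'+v_u^2$. Since $\gl'\geq\gl$, I expect a sharper squared version $\gl'^2\geq\gl^2+2v_u^2$ to follow by combining this with the $A'^2$ computation on the bipartite side. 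There I would test $A'^2$ with $(v,\,v_u/\gl)$ normalized, and I expect this step to be the main obstacle to get exactly right.

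\emph{Upper bound.} I would use the same characterization $\gl'=\lambda_{\max}(A+\gl'^{-1}e_ue_u^\top)\leq\gl+1/\gl'$, which gives $\gl'^2\leq\gl\gl'+1$. Writing $\gl'=\gl+\delta$ with $\delta\geq0$, this becomes $\gl\delta+\delta^2\leq1$. Hence
\begin{align*}
\gl'^2-\gl^2=2\gl\delta+\delta^2\leq2(\gl\delta+\delta^2)\leq2.
\end{align*}
To improve the constant to $1$, I would again use bipartiteness. Put $B=A'^2$ on the part containing $w$'s neighbor side, and compare $\gl'^2$ with $\gl^2+1$ via $\norm{A'}^2\leq\norm{A}^2+\norm{e_ue_w^\top+e_we_u^\top}\cdot(\text{cross terms})$. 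The precise tool is interlacing for the bipartite reduction $A'^2|_{\text{part}}=A^2|_{\text{part}}+e_ue_u^\top$ when $u$ lies on the appropriate side. A rank-one positive perturbation of norm $1$ raises the top eigenvalue by at most $1$. I expect the bookkeeping of which side $u$ lies on to be the delicate part.

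\emph{Consequence \eqref{eq:cubic-increment}.} Set $x=\gl_t^2$ and $y=\gl_{t+1}^2$, so that $y-x\in[2v_{J}^2,1]$. For the lower bound, $y^{3/2}-x^{3/2}\geq\tfrac32\sqrt x\,(y-x)\geq3\gl_tv_J^2$ by convexity of $s\mapsto s^{3/2}$. For the upper bound, the mean value theorem gives $y^{3/2}-x^{3/2}\leq\tfrac32\sqrt y\,(y-x)\leq\tfrac32\sqrt{x+1}$. The seed tree has at least two vertices and every $T_t$ is a connected bipartite tree, so the lemma applies at every step.
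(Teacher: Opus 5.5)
\textbf{Verdict: genuine gap in the lower bound.} Your derivation of \eqref{eq:cubic-increment} from the two-sided bound is correct. Your rank-one block argument for $\gl'^2-\gl^2\le1$ is the paper's argument. The gap is the lower bound $\gl'^2\ge\gl^2+2v_u^2$. Nothing you actually compute reaches it, and the final step is stated only as an expectation.
\begin{itemize}
\item The zero extension $(v,0)$ gives exactly $\gl^2+v_u^2$.
\item The $x_\eps$ test gives the factor $2$ only to first order.
\item Your pendant-edge relation $\gl'^2\ge\gl\gl'+v_u^2$ cannot give it at all, even after solving the quadratic exactly. It yields $\gl'^2\ge\tfrac12\bigl(\gl^2+2v_u^2+\gl\sqrt{\gl^2+4v_u^2}\bigr)$, which is strictly smaller than $\gl^2+2v_u^2$ whenever $v_u>0$. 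For the path $a$--$b$--$c$ extended at $a$ it is about $2.47$, versus $5/2$.
\end{itemize}
The lower half of \eqref{eq:cubic-increment} needs the exact factor $2$, so this step has to be supplied.

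\textbf{The missing ingredient is the half-mass identity.} It also settles your ``which side'' bookkeeping. Put $u$ in the first part $P_1$, so that $A=\bigl(\begin{smallmatrix}0&B\\B^\top&0\end{smallmatrix}\bigr)$ and the new leaf appends the column $e_u$ to $B$. Since $BB^\top$ and $B^\top B$ share their nonzero spectrum, and likewise for $B'$, we get $\gl^2=\lambda_{\max}(BB^\top)$ and $\gl'^2=\lambda_{\max}(BB^\top+e_ue_u^\top)$. Write $v=(v_1,v_2)$. The equations $Bv_2=\gl v_1$ and $B^\top v_1=\gl v_2$ give $\gl\norm{v_1}^2=v_1^\top Bv_2=\gl\norm{v_2}^2$, so $\norm{v_1}^2=1/2$.

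The Rayleigh quotient of $BB^\top+e_ue_u^\top$ at the unit vector $\sqrt2\,v_1$ is then exactly $\gl^2+2v_u^2$. Weyl's inequality on the same block gives $\gl'^2\le\gl^2+1$. Your $(v,0)$ lost half the gain because half of its mass lies on $P_2$, whose block does not see the perturbation.

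Your tentative candidate $(v,v_u/\gl)$ does work, but only after a computation you did not carry out. Its Rayleigh quotient for $A'^2$ is $(\gl^2+3v_u^2+v_u^2/\gl^2)/(1+v_u^2/\gl^2)$. This is at least $\gl^2+2v_u^2$ exactly when $2v_u^2\le1$, which again follows from the half-mass identity.
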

\begin{proof}
	Choose the bipartition with $u$ in its first part and write the
	adjacency matrix as $A=\bigl(\begin{smallmatrix}0&B\\B^\top&0\end{smallmatrix}\bigr)$. Thus $AA^\top = \bigl(\begin{smallmatrix}BB^\top&0\\0&B^\top B\end{smallmatrix}\bigr)$.
	The eigenvector equations give squared Perron mass $1/2$ on each part.
	Adding the leaf replaces $BB^\top$ by $BB^\top+e_ue_u^\top$.
	Rayleigh testing at the unit vector $\sqrt2\cdot (v$ restricted to the first
	part) gives the lower bound $\gl'^2\ge \gl^2+2v_u^2$. The operator norm of the update gives
	the upper bound. Finally, we apply the fact that $(x+a)^{3/2}-x^{3/2}=\frac32 \int_0^a \sqrt{x+u}\,du$ to obtain the last display.
\end{proof}

\begin{prop}[Initial upper bound for the maximum degree]\label{prop:upper}
	Almost surely, $\limsup_t (D^\star_t)^{3/2}/t\leq 3$.
\end{prop}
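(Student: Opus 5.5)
The plan is to track a single vertex $u$ and show that its degree obeys $d_u(t)^{3/2}\le (3+o(1))t$ uniformly, and then take the maximum. The key input is that the centrality attachment probability of $u$ is small when $d_u$ is large. Since $v_u(t)\le 2^{-1/2}$ by Lemma~\ref{lem:spectral}, we need a lower bound on $S_t$ in terms of $d_u$. From $\gl_t v_u=\sum_{w\sim u}v_w$ and $\sum_{w\sim u}v_w^2\le 1/2$, I would argue as follows. For $u$ of degree $d$, $S_t\ge v_u+\sum_{w\sim u}v_w=v_u(1+\gl_t)$, but this degenerates when $v_u$ is small. A cleaner route is $S_t\ge \gl_t\ge \sqrt{D^\star_t}$. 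This holds because for a tree $S_t^2\ge \sum v^2=1$, and more sharply $\gl_t S_t=\sum_w d_wv_w\le \max_w d_w^{1/2}\cdot\ldots$, which is the wrong direction. Instead I would use $S_t\ge \sum_w v_w^2/M_t\ge \sqrt2$ together with the Rayleigh bound $\gl_t\le \norm{A}_{\infty\to\infty}$ type argument. The intended inequality is $a_t(u)\le \ga v_u/S_t+\gb/t$ with
\[
\frac{v_u}{S_t}\le \frac{v_u}{\sum_{w\sim u}v_w}\cdot\frac{\sum_{w\sim u}v_w}{S_t}\le \frac{1}{\gl_t}\le \frac{1}{\sqrt{d_u(t)}},
\]
using $\sum_{w\sim u}v_w=\gl_tv_u$, $S_t\ge \sum_{w\sim u}v_w$, and $\gl_t\ge\sqrt{D^\star_t}\ge\sqrt{d_u(t)}$.

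With $a_t(u)\le d_u(t)^{-1/2}+\gb/t$ in hand, I would compare $X_t:=d_u(t)^{3/2}$ to a supermartingale. When $u$ receives a leaf, $\gD X_t=(d+1)^{3/2}-d^{3/2}\le \tfrac32(d+1)^{1/2}$, so
\[
\E_t\gD X_t\le \tfrac32\sqrt{d+1}\left(\frac{1}{\sqrt d}+\frac{\gb}{t}\right)\le \tfrac32+O(d^{-1})+O\!\left(\sqrt{d}/t\right).
\]
Since $d\le t$, the last term is $O(t^{-1/2})$, which sums to $O(\sqrt t)$. The drift term sums to $\tfrac32 t+o(t)$, which is even better than the claimed constant $3$. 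The slack of a factor $2$ can absorb a cruder estimate, for example the term $\gb\sqrt{d}/t$ or a weaker centrality bound. The martingale part $\xi_{t+1}=\gD X_t-\E_t\gD X_t$ has $\E_t\xi_{t+1}^2\le \tfrac94(d+1)\,a_t(u)\le C\sqrt{d}\le Ct^{1/2}$. Hence Lemma~\ref{lem:martingale-normalization} with $b_t=t$ gives $\sum\xi=o(t)$. This yields $\limsup d_u(t)^{3/2}/t\le 3/2$ almost surely for each fixed $u$.

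The main obstacle is passing from fixed $u$ to the maximum over all vertices, including vertices born late. Rather than union-bounding over $u$, I would work directly with $Y_t:=(D^\star_t)^{3/2}$. $D^\star$ increases by one only if the parent is a current maximizer. The probability of this is at most $N^\star_t\cdot\max_u a_t(u)$, where $N^\star_t$ counts the maximizers, and this is problematic under ties. To handle it, I would instead use the potential $Z_t=\sum_u d_u(t)^{p}$ for a large fixed $p$, or, more simply, $\Phi_t=\sum_u \bigl(d_u(t)^{3/2}-c\bigr)_+$ restricted to degrees above a threshold. Alternatively, a union bound with exponential (Freedman-type) tail estimates can be used: for each birth time $s$, the vertex $s$ has $P(d_s(t)^{3/2}>(3/2+\eps)t+K)$ summably small in $s,t$. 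I expect this uniformity step to be where the factor $3$ (versus $3/2$) is spent: one crude way is to note that ties increase $D^\star$ by at most one per step, and that each maximizer's drift bound $\tfrac32$ applies to $Y_t$ whenever the maximizer is unique, with the tie case costing at most another $\tfrac32$ in expected increment.
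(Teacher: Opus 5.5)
Your single-vertex analysis is sound. The bound $v_u(t)/S_t\le1/\gl_t$ via $\sum_{w\sim u}v_w=\gl_tv_u\le S_t$ is correct, although the abandoned side claim $S_t\ge\gl_t$ is false in general (e.g.\ for a star with many leaves). But there is a genuine gap exactly at the step you call the main obstacle: the proposal never controls $D^\star_t$. The per-vertex bound $a_t(u)\le\ga/\gl_t+\gb/t$ only yields $\pr(D^\star_{t+1}=D^\star_t+1\mid\cF_t)\le N^\star_t(\ga/\gl_t+\gb/t)$. Nothing you have bounds $N^\star_t$ by a constant; a priori it can be as large as $2(t-1)/D^\star_t$. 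So the closing heuristic that ``the tie case costs at most another $\tfrac32$'' is unjustified. The potential-function option is not analyzed. The union-bound option needs exponential tail bounds for $d_u(t)$ that are summable over $u\le t$ and over $t$. Your Kronecker-lemma argument gives only an almost-sure statement for each fixed $u$, with no rate, so it cannot feed a union bound.

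The missing idea is to bound the \emph{total} attachment probability of all maximizers at once, using the identity $\gl_tS_t=\sum_ud_u(t)v_u(t)$ of Lemma~\ref{lem:spectral}. It gives $\sum_{u:d_u(t)=D^\star_t}v_u(t)\le\gl_tS_t/D^\star_t$, while $N^\star_t\le2(t-1)/D^\star_t$ because the degrees sum to $2(t-1)$. Together with $\gl_t\le2\sqrt{D^\star_t-1}$ this gives, regardless of ties,
\[
\pr(D^\star_{t+1}=D^\star_t+1\mid\cF_t)\le\frac{\ga\gl_t+2\gb(1-1/t)}{D^\star_t}\le\frac{2}{\sqrt{D^\star_t}}.
\]
Now apply your own drift and martingale computation to $W_t=(D^\star_t)^{3/2}$ instead of $d_u(t)^{3/2}$. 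This gives $\E_t\gD W_t\le3\sqrt{1+1/D^\star_t}$ and $\E_t(\gD W_t)^2\le9\sqrt t$, hence the constant $3$. In particular, the factor $2$ over your $\tfrac32$ comes from the spectral upper bound $\gl_t\le2\sqrt{D^\star_t}$, not from ties. Your union-bound route could be completed, for example by dominating the waiting times between centrality selections of $u$ by independent geometric variables of means $\sqrt{k}/\ga$ and applying a Chernoff bound. It would even give the constant $3\ga/2$, but that estimate is absent from the proposal.
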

\begin{proof}
	Only attachment to a current degree maximizer increases $D^\star_t$. The
	spectral identity and $\sum_u d_u(t)=2(t-1)$ give
	\begin{align*}
		\pr(D^\star_{t+1}=D^\star_t+1\mid\cF_t)
		\leq\frac{\ga\gl_t+2\gb(1-1/t)}{D^\star_t}
		\leq\frac2{\sqrt{D^\star_t}}, \quad t\geq3.
	\end{align*}
	Define $W_t=(D^\star_t)^{3/2}$. Then
	$\E_t\gD W_t\leq3\sqrt{1+1/D^\star_t}$, and
	$\E_t(\gD W_t)^2\leq9\sqrt{D^\star_t}\leq9\sqrt t$.
	For the Doob martingale $L_t=W_t-W_{t_0}-\sum_{s<t}\E_s\gD W_s$,
	the series $\sum_s\gD L_s/s$ converges almost surely and in $L^2$,
	because its total variance is at most $C\sum_s s^{-3/2}<\infty$.
	Kronecker's lemma gives $L_t=o(t)$. If $D^\star_t$ stays bounded, the result is immediate. Otherwise, $D^\star_t\uparrow\infty$ and the Ces\`aro mean of
	the drift bound tends to $3$.
\end{proof}

\begin{prop}[Initial polynomial lower bound]\label{prop:bootstrap}
	Almost surely, $\gl_t=\Omega(t^{1/7})$.
\end{prop}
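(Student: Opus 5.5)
We plan to prove $\gl_t=\Omega(t^{1/7})$ through the cubic increment bound~\eqref{eq:cubic-increment}. Taking conditional expectations there gives
\begin{align*}
\E_t(\gl_{t+1}^3-\gl_t^3)\geq 3\gl_t\sum_u a_t(u)v_u(t)^2\geq 3\ga\gl_t\frac{\sum_u v_u(t)^3}{S_t}=3\ga\gl_t\kappa_t .
\end{align*}
By Lemma~\ref{lem:spectral}, $\kappa_t\geq\gl_t^3/(4Q_t)$ with $Q_t=\sum_u d_u(t)^2$. So the drift of $\gl^3$ is at least $\tfrac34\ga\gl_t^4/Q_t$. It is also at least $3\ga\gl_t/S_t^2\geq 3\ga/(\gl_t t)$, using $\kappa_t\geq S_t^{-2}$ and $S_t^2\leq t$. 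The plan is to bound $Q_t$ from above in terms of $\gl_t$ and $t$, and then integrate the resulting differential inequality.

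\textbf{Controlling $Q_t$.} We use $\sum_u d_u=2(t-1)$ and $d_u\leq D^\star_t\leq\gl_t^2$, which give $Q_t\leq 2tD^\star_t\leq 2t\gl_t^2$. The drift of $\gl^3$ is then at least $c\gl_t^2/t$.
\begin{itemize}
\item If $\gl_t\leq t^{\delta}$, this drift satisfies $d(\gl^3)/dt\gtrsim \gl^2/t$, i.e. $d\gl/dt\gtrsim 1/t$. That only yields logarithmic growth, which is too weak.
\item Combining with the other bound, $3\ga/(\gl t)$, is also weak.
\end{itemize}
Sharper control is needed: most vertices have small degree, so $Q_t$ should be nearly linear in $t$ when $\gl$ is small. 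We will bound $Q_t$ by a separate martingale argument. Attachment to $u$ has probability at most $\ga v_u/S_t+\gb/t$, and $v_u\leq\gl_t^{-1}\sum_{w\sim u}v_w$. The drift of $\sum_u d_u^2$ is $\sum_u(2d_u+1)a_t(u)$. Its centrality part equals $\ga(2\gl_t S_t+S_t)/S_t=\ga(2\gl_t+1)$ by the spectral identity, and its uniform part is at most $5\gb$. Hence $\E_t\gD Q_t\leq C(\gl_t+1)$, and $Q_t\leq C\sum_{s<t}\gl_s+o(t)$. Here $o(t)$ is the martingale error from Lemma~\ref{lem:martingale-normalization}; its variance increments are at most $D^\star_s\gl_s\lesssim s$, so we normalize by $b_t=t^{1-\eta}$ using Proposition~\ref{prop:upper}. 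Since $\gl_s=O(s^{1/3})$, this already gives $Q_t\lesssim t^{4/3}$.

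\textbf{Integrating and bootstrapping.} Feeding $Q_t\lesssim t^{4/3}$ into the drift bound gives $\E_t\gD(\gl^3)\gtrsim\gl_t^4t^{-4/3}$. With $y=\gl^3$ this reads $y'\gtrsim y^{4/3}t^{-4/3}$, i.e. $(y^{-1/3})'\lesssim -t^{-4/3}$. This is integrable and gives no growth from a small start, so we also keep the floor drift $3\ga/(\gl_t t)$, i.e. $(\gl^4)'\gtrsim 1/t$, which gives $\gl\gtrsim(\log t)^{1/4}$.

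The intended exponent $1/7$ suggests a cleaner route. Take $Q_t\lesssim t\,\bar\gl_t$, where $\bar\gl_t$ is the running average of $\gl_s$ up to $t$, obtained when the hub-type term dominates. Then $\E_t\gD(\gl^3)\gtrsim\gl^4/(t\bar\gl)$, and a self-consistent power $\gl\asymp t^{a}$ needs $3a-1\leq 4a-1-a$, which is an equality. We would therefore mix in the max-degree drift: attachment to a maximizer $h$ has rate at least $\ga v_h/S_t$. The choice of mixing should balance exponents at $1/7$, e.g. $D^\star\gtrsim t/(Q\text{-scale})$ combined with $\gl^2\geq D^\star$.

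The main obstacle is precisely this step: finding the right combination of the $Q_t$ upper bound and the lower bound on $\kappa_t$ that closes at $t^{1/7}$. After that, the conclusion follows from Lemma~\ref{lem:martingale-normalization} applied to $\gl^3$ minus its compensator, whose increments are bounded by $\tfrac32\sqrt{\gl^2+1}=O(t^{1/3})$. We then compare the sum of the drifts with the integral of the differential inequality, arguing on the stopped event $\{\gl_s\leq cs^{1/7}\}$.
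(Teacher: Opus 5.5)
There is a genuine gap, and it is the one you flag yourself: nothing in the proposal produces the exponent $1/7$. Your power-law check comes out as an identity only because you compared exponents and dropped the constants. Keep them: take $\gl_t=At^{a}$ and $Q_t\approx\sum_{s<t}\E_s\gD Q_s\approx2\ga\sum_{s<t}\gl_s\approx2\ga At^{1+a}/(1+a)$. Then your inequality $\E_t\gD(\gl_t^3)\geq\frac34\ga\gl_t^4/Q_t$ becomes $3a\geq\frac38(1+a)$, i.e.\ $a\geq1/7$. So the useful move is to compare $\gl$ with $Q$ rather than with $t$. The bound $Q_t\lesssim t^{4/3}$ discards exactly this information, and the max-degree drift is not needed. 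With $r_t=\gl_t^2$, the same ingredients give
\begin{align*}
\frac{\E_t\gD r_t}{r_t}\geq\frac{\ga\gl_t}{2Q_t},\qquad
\frac{\E_t\gD Q_t}{Q_t}=\frac{2\ga\gl_t+O(1)}{Q_t},
\end{align*}
so $\log r_t$ grows at least a quarter as fast as $\log Q_t$, suggesting $Q_t=O(\gl_t^8)$. The paper proves this pathwise. It first shows $r_t\to\infty$. It then uses that $Q_tf(r_t)$, with $f(r)=r^{-4}e^{K/\sqrt r}$, is a nonnegative supermartingale once $r_t$ is large; the factor $e^{K/\sqrt r}$ absorbs the $O(1)$ part of the $Q$-drift. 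Next, $Q_t\geq2(t-1)$ gives $\gl_t\gtrsim t^{1/8}$. This lets the compensator of $Q_t$, which is at least $2\ga\cB_t$ with $\cB_t=\sum_{s<t}\gl_s$, dominate the martingale error, so $\gl_t\gtrsim\cB_t^{1/8}$. Finally, $\gD\cB_t=\gl_t\gtrsim\cB_t^{1/8}$ yields $\cB_t\gtrsim t^{8/7}$ and hence $\gl_t\gtrsim t^{1/7}$.

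Your martingale steps are also too weak as stated. The conditional second moment of $\gD Q_t$ is of order $\gl_t^3$, which is bounded a priori only by $O(t)$. Normalizing by $t^{1-\eta}$ therefore leads to the divergent series $\sum_s s^{-1+2\eta}$. Lemma~\ref{lem:martingale-normalization} gives only $o(t^{1+\eta})$ (the paper uses $t^{9/8}$). That error is negligible against the compensator only after a preliminary bound such as $\gl_t\gtrsim t^{1/8}$, which is why the paper bootstraps in two stages. More seriously, the closing step cannot work as you describe it. Increments of $\gl^3$ have size $O(\gl_t)$, so even on your stopped event $\{\gl_s\leq cs^{1/7}\}$, Lemma~\ref{lem:martingale-normalization} controls the martingale part only at scale $t^{9/14+\eta}$. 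That is far above the target size $\gl_t^3\asymp t^{3/7}$. This is why the paper avoids an additive compensator for $\gl^3$ at this stage. It works instead with $r_t$, whose increments lie in $[0,1]$, inside a multiplicative Lyapunov function that is a nonnegative supermartingale, so no normalization is required.
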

\begin{proof}
	Write
	\begin{align*}
		r_t=\gl_t^2,\qquad Q_t=\sum_u d_u(t)^2,\qquad
		x_t=\gD r_t,\qquad y_t=\gD Q_t.
	\end{align*}
	We first compare $Q_t$ with a power of $r_t$, then compare $Q_t$
	with its compensator. These two comparisons yield the lower bound.

\smallskip\noindent\textbf{Step 1: Drift estimates and divergence of $r_t$.}
	Lemma~\ref{lem:pendant}, Lemma~\ref{lem:spectral}, and the attachment
	rule give
	\begin{align}
		&0\leq x_t\leq1,\qquad
		\E_t x_t\geq2\ga\kappa_t+2\gb/t
		\geq\frac{\ga\gl_t^3}{2Q_t},\label{eq:rho-drift}\\
		&y_t=2d_{J_{t+1}}(t)+2,\qquad
		b_t:=\E_t y_t=2\ga\gl_t+2\ga+6\gb-4\gb/t.
		\label{eq:Q-drift}
	\end{align}
	The formula for $b_t$ uses
	$\E_t d_{J_{t+1}}(t)=\ga\gl_t+2\gb(t-1)/t$.
	Also, $\kappa_t\geq1/S_t^2\geq1/t$, so $\E_t x_t\geq2/t$.
	For $c=1-e^{-1}$, the inequality
	$e^{-x}\leq1-cx$ on $[0,1]$ therefore gives
	\begin{align*}
		\E_t e^{-r_{t+1}}\leq e^{-r_t}(1-2c/t).
	\end{align*}
	Iteration shows $\E e^{-r_t}\to0$. Since $r_t$ is nondecreasing,
	bounded convergence implies $r_t\to\infty$ almost surely.

\smallskip\noindent\textbf{Step 2: A supermartingale comparison for $Q_t$.}
	Choose a deterministic $K>8+24\gb/\ga$ and put
	$f(r)=r^{-4}e^{K/\sqrt r}$. Direct differentiation gives
	\begin{align*}
		-\frac{f'(r)}{f(r)}=\frac4r+\frac{K}{2r^{3/2}},\qquad
		\frac{f''(r)}{f(r)}
		=\frac{20}{r^2}+\frac{19K}{4r^{5/2}}+\frac{K^2}{4r^3}.
	\end{align*}
	Both $f$ and $f''$ are decreasing. Taylor's theorem and $x^2\leq x$
	thus imply, for $r\geq1$ and $0\leq x\leq1$,
	\begin{align*}
		f(r+x)-f(r)
		\leq-f(r)\left(\frac4r+\frac{K}{2r^{3/2}}
		-\frac{C_K}{r^2}\right)x,
	\end{align*}
	where $C_K$ is a finite deterministic constant.
	Define, $R_t=Q_tf(r_t)$. Since $y_t\geq0$ and $f$ is decreasing,
	\begin{align*}
		\gD R_t
		&=Q_t\bigl(f(r_t+x_t)-f(r_t)\bigr)+y_tf(r_t+x_t)\\
		&\leq Q_t\bigl(f(r_t+x_t)-f(r_t)\bigr)+y_tf(r_t).
	\end{align*}
	For all sufficiently large $r_t$, the coefficient of $x_t$ in
	the Taylor bound is negative. Applying~\eqref{eq:rho-drift}
	and~\eqref{eq:Q-drift} then gives
	\begin{align*}
		\E_t\gD R_t
		\leq f(r_t)\left(2\ga+6\gb-\frac{K\ga}{4}
		+\frac{C_K\ga}{2\sqrt{r_t}}-\frac{4\gb}{t}\right)
		\leq0.
	\end{align*}
	The last inequality holds above a deterministic threshold $r_0$,
	because $2\ga+6\gb-K\ga/4<0$.

	To justify this comparison without conditioning on future growth,
	fix a deterministic $n$ and let $G_n=\{r_n\geq r_0\}\in\cF_n$.
	Monotonicity of $r_t$ shows that
	$(\ind_{G_n}R_t)_{t\geq n}$ is a nonnegative supermartingale.
	Its initial value is integrable: at the fixed time $n$ there are
	only finitely many possible trees, and $r_n\geq1$.
	It therefore converges and is bounded almost surely.
	Since $r_t\to\infty$, almost every path belongs to some $G_n$.
	Taking the countable union over $n$, and using
	$f(r)^{-1}=r^4e^{-K/\sqrt r}\leq r^4$, yields
	\begin{align}\label{eq:Q-rho}
		Q_t=O(r_t^4)=O(\gl_t^8)\quad\hbox{almost surely}.
	\end{align}

\smallskip\noindent\textbf{Step 3: Control of the compensator error.}
	Let
	\begin{align*}
		L_t=Q_t-Q_{t_0}-\sum_{s<t}b_s.
	\end{align*}
	This is a martingale, with $\gD L_t=y_t-b_t$.
	Since every degree is at least one and
	$D^\star_t\leq\gl_t^2$, we have
	\begin{align*}
		y_t^2&\leq16d_{J_{t+1}}(t)^2,\\
		\E_t d_{J_{t+1}}(t)^2
		&\leq D^\star_t\E_t d_{J_{t+1}}(t)
		\leq\gl_t^2(\ga\gl_t+2\gb).
	\end{align*}
	Consequently $\E_t(\gD L_t)^2\leq\E_t y_t^2\leq C\gl_t^3$.
	Use the predictable gate
	\begin{align*}
		\chi_t=\ind_{\{\gl_t\leq5t^{1/3}\}},\qquad
		\widetilde L_t=\sum_{s<t}\chi_s\gD L_s.
	\end{align*}
	Proposition~\ref{prop:upper} and Lemma~\ref{lem:spectral} imply
	$\chi_t=1$ eventually almost surely. On the other hand, the gated
	variance bound is deterministic:
	\begin{align*}
		\sum_{s\geq t_0}
		\E\left(\left(\frac{\chi_s\gD L_s}{s^{9/8}}\right)^2\right)
		\leq C\sum_{s\geq t_0}s^{-5/4}<\infty.
	\end{align*}
	Thus $\sum_s\chi_s\gD L_s/s^{9/8}$ converges almost surely and
	in $L^2$. Kronecker's lemma gives $\widetilde L_t=o(t^{9/8})$.
	The increments of $L_t$ and $\widetilde L_t$ differ at only finitely
	many times almost surely, so
	\begin{align*}
		L_t=o(t^{9/8})\quad\hbox{almost surely}.
	\end{align*}

\smallskip\noindent\textbf{Step 4: The first bootstrap, $\gl_t\gtrsim t^{1/8}$.}
	Since $Q_t\geq\sum_u d_u(t)=2(t-1)$,~\eqref{eq:Q-rho} first gives $\gl_t\gtrsim t^{1/8}$.
	Put $\cB_t=\sum_{s<t}\gl_s$. Then
	$\cB_t\gtrsim t^{9/8}$, and~\eqref{eq:Q-drift} implies
	$\sum_{s<t}b_s\geq2\ga\cB_t$.
	The martingale error from Step~3 is therefore negligible relative
	to this compensator. Hence
	\begin{align*}
		Q_t\sim\sum_{s<t}b_s,\qquad
		Q_t\geq\ga\cB_t\quad\hbox{eventually}.
	\end{align*}
	Combining the latter inequality with~\eqref{eq:Q-rho} gives
	\begin{align*}
		\gl_t\geq c\cB_t^{1/8}\quad\hbox{eventually},
	\end{align*}
	for a positive random constant $c$.

\smallskip\noindent\textbf{Step 5: The second bootstrap, $\gl_t\gtrsim t^{1/7}$.}
	The bound $\gD r_t\leq1$ gives $\gl_t=O(\sqrt t)$
	deterministically. Together with $\cB_t\gtrsim t^{9/8}$,
	this shows
	\begin{align*}
		\frac{\cB_{t+1}}{\cB_t}
		=1+\frac{\gl_t}{\cB_t}\to 1.
	\end{align*}
	For all sufficiently large $t$, the mean value theorem and Step~4
	now give
	\begin{align*}
		\cB_{t+1}^{7/8}-\cB_t^{7/8}
		&\geq\frac78\cB_{t+1}^{-1/8}\gl_t
		\geq\frac{7c}{8}
		\left(\frac{\cB_t}{\cB_{t+1}}\right)^{1/8}
		\geq c'>0.
	\end{align*}
	Summing gives $\cB_t\gtrsim t^{8/7}$, and then
	$\gl_t\gtrsim\cB_t^{1/8}\gtrsim t^{1/7}$.
	This proves the proposition and, in particular, $\gl_t=\Omega(t^{1/7})$. All constants in Steps~4--5 are
	pathwise comparison constants; the martingale estimates above
	were obtained with deterministic bounds and adapted gates.
\end{proof}

\section{Finite-core localization}\label{sec:core}

First, we prove existence of a finite old core.
\begin{prop}[A finite old core]\label{prop:finite-core}
	For every $0<\eps<1/16$, almost surely there is a finite $N$
	such that $d_u(t)\leq\eps\gl_t^2$ for every $t\geq N$ and
	$N<u\leq t$.
\end{prop}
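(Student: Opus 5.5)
We argue by a first-crossing analysis driven by the spectral bounds of Section~\ref{sec:growth}. By Propositions~\ref{prop:upper} and~\ref{prop:bootstrap}, almost surely $c\,t^{2/7}\le\gl_t^2\le C t^{2/3}$ eventually. We localize as the notation section prescribes: fix rational $c,C$ and a starting time, and stop at the first failure of these bounds. Call a vertex $u$ \emph{late at level $n$} if $u>n$. The aim is to show that the event
\begin{align*}
E_n=\{\exists\,t\ge n,\ u\in(n,t]:\ d_u(t)>\eps\gl_t^2\}
\end{align*}
has summable probability along a geometric sequence $n=2^j$. Borel--Cantelli then gives a finite $N$.

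\emph{Step 1: late vertices have small centrality before the barrier.} While every late vertex has degree at most $\eps\gl_t^2$, the Perron vector should put little mass on the late part. Every late vertex $u$ has a unique path to the old tree $[n]$, and the eigenvector equation gives
\begin{align*}
v_u=\gl^{-1}\sum_{w\sim u}v_w .
\end{align*}
We apply this to the forest $F$ spanned by the late vertices. By Lemma~\ref{lem:spectral}, $\norm{A_F}\le2\sqrt{\eps}\,\gl<\gl/2$ for $\eps<1/16$. The resolvent expansion $v_F=(\gl-A_F)^{-1}B v_{\mathrm{old}}$ therefore converges geometrically. It bounds the centrality of a late vertex $u$ by a constant times $\gl^{-\mathrm{dist}}$ times the centrality of its old ancestor $a(u)$. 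In particular,
\begin{align*}
v_u\le 2\gl_t^{-1}\max_{w}v_w\le \sqrt2\,\gl_t^{-1}.
\end{align*}
Attachment to a given late vertex thus has rate at most
\begin{align*}
\ga\sqrt2/(\gl_tS_t)+\gb/t\le C'/\gl_t^2+\gb/t,
\end{align*}
using $S_t\ge\gl_t/\sqrt2$ from $\gl S=\sum d_uv_u\le \sqrt2 D^\star\cdots$. If this estimate of $S_t$ is too crude, we use $S_t\gtrsim\gl_t$ from the spectral identity together with $v_h\ge$ const.

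\emph{Step 2: exponential comparison at the first crossing.} Fix a late vertex $u$, born at time $s>n$. Up to the first crossing time $\tau$, its degree is stochastically dominated by a pure-birth process with rate $C'/\gl_t^2+\gb/t$. Over the interval $[s,t]$ the expected gain is at most
\begin{align*}
\int_s^t C'/\gl_r^2\,dr+\gb\log(t/s).
\end{align*}
The barrier $\eps\gl_t^2\ge \eps c\,t^{2/7}$, by contrast, is polynomially large. The drift is the delicate part, because $\int_s^t r^{-2/7}dr$ can exceed $t^{2/7}$. The bound must therefore use a sharper estimate. The rate $v_u/S_t$ equals $\gl^{-1}v_{\mathrm{parent}}/S$, and $\sum_{u\,\text{late child of }w}$ is controlled by $\gl v_w$. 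A vertex's own rate is thus at most $d_u/(\gl^2)$ times the maximal old rate, which is a Yule-type rate. A Yule process with rate $d/\gl_t^2$ integrated over time gives growth $\exp(\int dr/\gl_r^2)$. We expect this to reproduce the heuristic $t^{1/3}$ order for neighbors, which is far below $\eps\gl^2\asymp t^{2/3}$ once the scales are matched.

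A Chernoff/exponential-martingale bound for $e^{\lambda d_u}$ then gives
\begin{align*}
\pr(d_u\text{ crosses }\eps\gl^2\text{ before }\tau)\le \exp(-c''t^{1/4}),
\end{align*}
using $2/7>1/4$ as the paper indicates. A union bound over at most $t$ vertices and all $t\ge n$ then gives a summable error.

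\emph{Main obstacle.} The hard step is Step 2. We must control the crossing using only the initial bounds $\gl^2\gtrsim t^{2/7}$ and $\gl\lesssim t^{1/3}$, which do not match. The centrality comparison of Step 1 has to be fed into an exponential supermartingale of the form $\exp(\lambda d_u(t)-\lambda\eps\gl_t^2/2)$, stopped at the first crossing. The growth of $\gl_t^2$ itself, via Lemma~\ref{lem:pendant} and $\E_t\gD\gl^2\ge 2\ga\kappa_t$, should outpace late degrees. I would try first to compare $d_u$ with the increment of $\gl^2$ contributed by its old ancestor, using $\gD(\gl^2)\ge 2v_{a(u)}^2$ and $v_u\lesssim v_{a(u)}/\gl$.
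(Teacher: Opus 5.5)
Your Step~1 is essentially the paper's cavity estimate. Step~2, however, has a genuine gap, and none of the routes you sketch closes it with the information available at this point.

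\textbf{Why Step~2 fails as proposed.} You correctly note that an absolute bound $a_s(u)\le C'/\gl_s^2+\gb/s$ is useless when only $\gl_s^2\gtrsim s^{2/7}$ is known: its time integral can be of order $t^{5/7}\gg\eps\gl_t^2$. The Yule-type heuristic then appeals to matched scales ($\eps\gl_t^2\asymp t^{2/3}$, neighbour degrees of order $t^{1/3}$). Those come from Proposition~\ref{prop:scales}, which uses Corollary~\ref{cor:coremass}, which is itself derived from the statement you are proving. So that step is circular. Moreover, the cavity recursion bounds $v_u$ by roughly $v_{p(u)}/\gl$ independently of $d_u$, so no rate of the form $d_u/\gl^2$ follows from it.

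The comparison with $\gl_t^2$ does not close either. The centrality part of the rate at $u$ is of order $v_{a(u)}/(\gl S)$, while the ancestor's contribution to the drift of $\gl^2$ is of order $v_{a(u)}^3/S$. Their ratio $1/(\gl v_{a(u)}^2)$ is unbounded when the ancestor has small centrality. Switching to the total drift $2\ga\kappa_t\ge\ga\gl_t/(2S_t^2)$ would require an a priori bound like $S_t\lesssim\eps\gl_t^2$. That bound is not available here; the paper gets $S_t=O(\gl_t)$ only in Proposition~\ref{prop:scales}.

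\textbf{The missing idea.} Do not discard the relative bound $v_u\lesssim\gl^{-\dist}v_{a(u)}$, which you state and then replace by $\max_w v_w$. Instead, compare degrees with degrees.
\begin{itemize}
\item Fix a deterministic $N$ and work on $\{\gl_N^2\ge R\}$ with $R\ge(2/\eta)^2$ and $\eta=\eps/12$. Let $h(u)\in[N]$ be the nearest core ancestor of $u$; it is fixed at birth.
\item Before the first crossing, the cavity ratios satisfy $v_w/v_{p(w)}\le 2/\gl_s\le\eta$ along the path to the core. Hence $a_s(u)\le\eta\,a_s(h(u))+\gb/s$.
\item One step attaches to at most one of $u$ and $h(u)$. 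This gives $\E_s e^{\gD d_u(s)-3\eta\gD d_{h(u)}(s)}\le e^{K/s}$ with $K=\gb(e-1)$. So $\exp\{d_u-3\eta d_{h(u)}-K\sum_{j=u}^{s-1}j^{-1}\}$, stopped at the crossing, is a nonnegative supermartingale started at most at $e$.
\item The ancestor's degree needs no separate control: $d_{h(u)}(t)\le D^\star_t\le\gl_t^2$ by Lemma~\ref{lem:spectral}. A crossing at time $t$ therefore forces $d_u(t)-3\eta d_{h(u)}(t)>\tfrac34\eps\gl_t^2$.
\item On $\{\gl_t^2\ge t^{1/4}\}$, Markov's inequality and a union bound over $u\le t$ give probability at most $e^{1+K}t^{K+1}e^{-3\eps t^{1/4}/4}$, which is summable. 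Proposition~\ref{prop:bootstrap} gives $\pr(\exists t\ge N:\gl_t^2<t^{1/4})\to0$.
\end{itemize}
Thus $2/7>1/4$ enters only as a lower bound on the size of the barrier, not through the attachment rates. No upper bound on $\gl_t$ is needed. Finally, your events $E_n$ decrease in $n$, so $\pr(E_n)\to0$ already suffices; summability along $2^j$ is unnecessary.
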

\begin{proof}
	Put $\eta=\eps/12$ and
	$R=\max\{(2/\eta)^2,1/\eps\}$. For deterministic $N$, let
	$G_N=\{\gl^2_N\geq R\}$ and stop at the random time
	\begin{align*}
		\tau_N:=\inf\{t\geq N:\exists u>N, u\leq t, d_u(t)>\eps\gl_t^2\}.
	\end{align*}
	The connected old core $[N]$ has one edge to each exterior component.
	Root each component toward the core and let $h(u)\in[N]$ be the
	nearest core ancestor of $u$. This label is fixed at birth. For
	$N\leq s<\tau_N$ on $G_N$, leaf-to-root induction in the fixed tree
	$T_s$ gives, for each exterior child $w$,
	\begin{align*}
		\frac{v_w(s)}{v_{p(w)}(s)}
		=\frac1{\gl_s-\sum_{z:p(z)=w}v_z(s)/v_w(s)}
		\leq\frac1{(1-2\eps)\gl_s}\leq\frac2{\gl_s}.
	\end{align*}
	The base case is a leaf, and there are at most
	$\eps\gl_s^2$ children. Multiplication along the path to
	the core, using $2/\gl_s\leq\eta<1$, gives
	\begin{align}\label{eq:trap-comparison}
		a_s(u)\leq\eta a_s(h(u))+\gb/s.
	\end{align}
	Let $K=\gb(e-1)$. The two possible degree increments in~\eqref{eq:trap-comparison} are mutually exclusive, so
	\begin{align*}
		\E_s e^{\gD d_u(s)-3\eta\gD d_{h(u)}(s)}
		\leq1+a_s(h(u))\,\big(\eta(e-1)+e^{-3\eta}-1\big)+K/s
		\leq e^{K/s}.
	\end{align*}
	The expression in parentheses is negative because
	\begin{align*}
		\eta(e-1)+e^{-3\eta}-1
		\leq(e-4)\eta+\tfrac92\eta^2<0
		\text{ for }0<\eta<1/192.
	\end{align*}
	For a fixed label $u>N$, define
	$I_u=G_N\cap\{\tau_N>u\}\in\cF_u$. For $s\geq u$, define
	\begin{align*}
		L_s^{(u)}=\ind_{I_u}
		\exp\left\{d_{s\wedge\tau_N}(u)-3\eta d_{s\wedge\tau_N}(h(u))
		-K\sum_{j=u}^{s\wedge\tau_N-1}j^{-1}\right\}.
	\end{align*}
	On $I_u^c$ the process is defined as zero, so no unborn degree is
	evaluated. It is a nonnegative supermartingale starting at at most
	$e$, since $d_u(u)=1$. The random ancestor is $\cF_u$-measurable.

	On $G_N$ a newborn cannot cause the first violation, because
	$1\leq\eps\gl^2_N$. If $\tau_N=t<\infty$ and
	$\gl_t^2\geq t^{1/4}$, some existing $u<t$ therefore has
	\begin{align*}
		d_u(t)-3\eta d_{h(u)}(t)-K\sum_{j=u}^{t-1}j^{-1}
		>\tfrac34\eps t^{1/4}-K(1+\log t),
	\end{align*}
	where $d_{h(u)}(t)\leq\gl_t^2$. Every such offender satisfies $I_u$.
	Markov's inequality and a union bound give
	\begin{align*}
		\pr(G_N,\tau_N=t,\gl_t^2\geq t^{1/4})
		\leq e^{1+K}t^{K+1}e^{-3\eps t^{1/4}/4}.
	\end{align*}
	Consequently
	\begin{align*}
		\pr(\tau_N<\infty)\leq{} & \pr(G_N^c)
		+\pr(\exists t\geq N:\gl_t^2<t^{1/4})+\sum_{t\geq N}e^{1+K}t^{K+1}e^{-3\eps t^{1/4}/4}.
	\end{align*}
	The last series has vanishing tails. The first two terms tend to zero
	by Proposition~\ref{prop:bootstrap}, since $2/7>1/4$. Thus,
	$\pr(\tau_N<\infty)\to0$, and
	$
		\pr\left(\cap_N\{\tau_N<\infty\}\right)
		\leq\inf_N\pr(\tau_N<\infty)=0.
	$
	This proves the existence of a successful core.
\end{proof}
As a corollary, we get the following lower bound on the maximum Perron mass at a single site.
\begin{cor}[Positive core mass]\label{cor:coremass}
	Almost surely, $\liminf_t M_t>0$.
\end{cor}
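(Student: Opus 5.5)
Fix $\eps\in(0,1/16)$ and let $N$ be the finite random index from Proposition~\ref{prop:finite-core}. On the event of that proposition, for all $t\geq N$ every exterior vertex $u\in(N,t]$ has $d_u(t)\leq\eps\gl_t^2$. The plan is to show that the Perron vector cannot spread most of its squared mass over the exterior. A positive fraction of that mass must then sit on the finitely many core vertices $[N]$, and pigeonhole gives $M_t^2\geq c/N$.

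The first step is the exterior comparison already derived inside the proof of Proposition~\ref{prop:finite-core}. For $t\geq N$ and each exterior child $w$ of $p(w)$,
\begin{align*}
v_w(t)\leq\frac{v_{p(w)}(t)}{(1-2\eps)\gl_t}.
\end{align*}
This is valid once $\gl_t$ is large, which holds by Proposition~\ref{prop:bootstrap}. Write $F$ for the exterior forest, so $F$ is $T_t$ with $[N]$ deleted. Each component of $F$ hangs from a unique core vertex by one edge.

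The second step bounds the exterior squared mass spectrally. Restrict the eigen-equation $Av=\gl_t v$ to the exterior vertex set $E$:
\begin{align*}
\gl_t v_E=A_Fv_E+B^\top v_{[N]},
\end{align*}
where $B$ records the core-to-exterior edges. Every exterior vertex has degree at most $\eps\gl_t^2$. By the forest bound at the end of Lemma~\ref{lem:spectral}, $\norm{A_F}\leq2\sqrt{\eps}\,\gl_t\leq\gl_t/2$ since $\eps<1/16$. Hence
\begin{align*}
\norm{v_E}\leq\frac{\norm{B^\top v_{[N]}}}{\gl_t-\norm{A_F}}\leq\frac{2\norm{B^\top v_{[N]}}}{\gl_t}.
\end{align*}
Now $\norm{B^\top v_{[N]}}^2=\sum_{u\in[N]}e_u v_u^2$, where $e_u\leq d_u(t)\leq\gl_t^2$ counts the exterior neighbors of $u$. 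This gives only $\norm{v_E}^2\leq4\sum_{u\le N}v_u^2$. Combined with $\norm{v_E}^2+\sum_{u\leq N}v_u^2=1$, it yields
\begin{align*}
\sum_{u\leq N}v_u(t)^2\geq\tfrac15.
\end{align*}
Therefore $M_t^2\geq 1/(5N)$ for all $t\geq N$, and $\liminf_t M_t\geq(5N)^{-1/2}>0$ almost surely.

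The main obstacle is making the constant in the resolvent bound genuinely less than one, so that the core keeps a fixed positive fraction of the mass. The bound must not degenerate if a core vertex has degree comparable to $\gl_t^2$. The factor $2$ absorbs this, because $e_u/\gl_t^2\leq D^\star_t/\gl_t^2\leq1$ by Lemma~\ref{lem:spectral}. If that margin were too tight, I would first shrink $\eps$ so that $\norm{A_F}\leq\gl_t/4$ and redo the count. Measurability is not an issue: $N$ comes from a countable union of events $\{\tau_N=\infty\}$, so the bound holds on each such event and hence almost surely.
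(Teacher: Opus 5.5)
Your proof is correct and is essentially the paper's argument. Both bound $\norm{v_E}\le\norm{B^\top v_{[N]}}/(\gl_t-\norm{A_F})$ using the forest bound on $\norm{A_F}$ and the diagonal structure of $BB^\top$ (entries at most $D^\star_t\le\gl_t^2$), then conclude by normalization and pigeonhole. The paper takes $\eps=1/64$ to get the factor $4/3$ and $M_t\ge 3/(5\sqrt N)$, while your $\eps<1/16$ gives factor $2$ and $M_t\ge(5N)^{-1/2}$; your first step (the cavity comparison) is never used and can be dropped.
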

\begin{proof}
	Choose a successful core $C=[N]$ with $\eps=1/64$. Write
	$A_t=\bigl(\begin{smallmatrix}A_C&B\\B^\top&A_F\end{smallmatrix}\bigr)$
	and $v=(z,w)$. The forest bound gives
	$\norm{A_F}\leq2\sqrt\eps\gl_t$. Since each exterior
	component attaches once to the core, $BB^\top$ is diagonal, with
	entries at most $D^\star_t\leq\gl_t^2$. Thus
	\begin{align*}
		\norm{w}\leq\frac{\norm{B}}{\gl_t-\norm{A_F}}\norm{z}
		\leq\frac{\norm{z}}{1-2\sqrt\eps}.
	\end{align*}
	Normalization implies $1= \norm{z}^2 +\norm{w}^2\le  (1+ (4/3)^2)\norm{z}^2=(5/3)^2\norm{z}^2$ and thus,  $M_t\geq\norm{z}/\sqrt N\geq3/(5\sqrt N)$.
\end{proof}
Finally, we prove the optimal growth rate for $D^\star_t, \gl_t,S_t$.
\begin{prop}[Matching scales]\label{prop:scales}
	Almost surely
	\begin{align*}
		D^\star_t=\Theta(t^{\theta}),\quad \gl_t,S_t=\Theta(t^{\theta/2}),\quad
		\liminf_t\gl_t\kappa_t>0.
	\end{align*}
\end{prop}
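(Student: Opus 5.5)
Corollary~\ref{cor:coremass} gives a random $m>0$ with $M_t\geq m$ eventually, and this positive core mass drives everything. The plan is to run a cubic drift argument for $\gl_t^3$ from below, using Lemma~\ref{lem:pendant}. The upper bounds $D^\star_t=O(t^{2/3})$ and $\gl_t=O(t^{1/3})$ are already available from Proposition~\ref{prop:upper} and Lemma~\ref{lem:spectral}. Since $S_t\leq\sqrt t$ gives nothing useful, the bound $S_t=O(\gl_t)$ will instead come from the finite-core structure.

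\textbf{Step 1: $S_t=O(\gl_t)$.} Fix a successful core $C=[N]$ with $\eps=1/64$ from Proposition~\ref{prop:finite-core}. In each exterior component, the leaf-to-root ratio bound $v_w/v_{p(w)}\leq 1/((1-2\eps)\gl_t)$ from that proof holds. Summing over children of a core vertex $c$ gives the exterior mass attached directly to $c$ as at most $d_c(t)v_c/((1-2\eps)\gl_t)\leq C\gl_t v_c$. The same ratio bound applies at each further level, since a vertex $w$ has at most $\eps\gl_t^2$ children. Hence the total mass of the subtree of $w$, excluding $w$ itself, is at most a geometric series. Concretely, I would prove by leaf-to-root induction that the subtree mass below $w$ is at most $c_0\gl_t v_w$ for a fixed $c_0$, using $\sum_{z:p(z)=w}v_z\leq v_w\gl_t$ from the eigen-equation at $w$. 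Then $S_t\leq \sum_{c\in C}(1+c_0\gl_t)v_c\leq C'\gl_t$. The lower bound $S_t\geq \gl_t M_t\cdot$const also follows: the eigen-equation at the heaviest core vertex $u$ gives $\sum_{w\sim u}v_w=\gl_t v_u\geq m\gl_t$. So $S_t\asymp\gl_t$.

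\textbf{Step 2: lower bound on $\gl_t$.} By \eqref{eq:cubic-increment}, $\E_t\gD(\gl_t^3)\geq 3\gl_t\sum_u a_t(u)v_u^2\geq 3\ga\gl_t M_t^3/S_t\geq c>0$ eventually, by Step~1 and Corollary~\ref{cor:coremass}. The increments are bounded by $O(\gl_t)=O(t^{1/3})$. Lemma~\ref{lem:martingale-normalization} with $b_t=t$ applies, after gating with $\ind_{\{\gl_t\leq 5t^{1/3}\}}$ and stopping at failure of $M_t\geq m$, $S_t\leq C'\gl_t$ over rational constants as in the paper's conventions. It makes the martingale part $o(t)$, since $\sum t^{2/3}/t^2<\infty$. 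Hence $\gl_t^3\gtrsim t$, so $\gl_t,S_t=\Theta(t^{1/3})$. For $D^\star_t$, note that $D^\star_t\leq\gl_t^2=O(t^{2/3})$. For the lower bound, pick a core vertex $u$ with $v_u\geq m$. Its attachment probability is at least $\ga m/S_t\gtrsim t^{-1/3}$, so $d_u(t)\gtrsim t^{2/3}$ by the same martingale normalization.

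\textbf{Step 3: $\liminf\gl_t\kappa_t>0$.} This is immediate: $\kappa_t\geq M_t^3/S_t\gtrsim 1/\gl_t$ by Step~1.

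I expect Step~1 to be the main obstacle. The recursive subtree-mass bound must be uniform over exterior trees of arbitrary depth with only $d\leq\eps\gl^2$ per vertex. The induction must close with a constant $c_0$ independent of depth: if the subtree mass below $w$ is bounded by $\sum_{z}(v_z+c_0\gl v_z)$, naive iteration blows up. The fix I would try is to bound the weighted sum $\sum_z v_z$ against $v_w\cdot \gl\sum_k(2/\gl)^k(\eps\gl^2)^{k}$ in a sharper way. If that fails, I would use the operator bound from Corollary~\ref{cor:coremass}, namely $w=(\gl-A_F)^{-1}B^\top z$, together with $\vone^\top(\gl-A_F)^{-1}\leq$ a Neumann series estimate in $\ell_1$ of each forest component.
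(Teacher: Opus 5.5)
Your Steps 2 and 3 are essentially the paper's argument: the cubic drift of $\gl_t^3$ with a gated martingale error, then $\kappa_t\ge M_t^3/S_t$ and $S_t\ge\gl_tM_t$. Step 1, however, has a genuine gap that no sharper static induction or $\ell_1$ Neumann estimate can close. The facts available at this point do not imply $S_t=O(\gl_t)$. Those facts are a successful core with exterior degrees at most $\eps\gl_t^2$, $\liminf_tM_t>0$, and $t^{1/7}\lesssim\gl_t\lesssim t^{1/3}$.

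Here is a configuration consistent with all of them. Take a vertex $h$ with $D$ neighbors, each carrying $k$ leaves. The eigen-equations give $\gl^2=D+k$, with coordinates $\gl v_h/D$ on the neighbors and $v_h/D$ on the leaves. Hence $S=v_h(1+\gl+k)$ and $\sum_uv_u^2=v_h^2(2+2k/D)$. Now choose $k=\eps\gl^2/2$. Every exterior degree is below $\eps\gl^2$, your cavity ratio bound holds, and $v_h$ stays of order one. Also $t\asymp\eps\gl^4$ is compatible with the known polynomial bounds. Yet $S/\gl\asymp\eps\gl\to\infty$.

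Your per-vertex subtree bound even holds at each neighbor in this example. But the exterior mass below the core vertex $h$ is $v_h(\gl+k)\gg\gl v_h$, so the aggregation $S_t\le\sum_{c\in C}(1+c_0\gl_t)v_c$ fails. Correspondingly, the level ratio in your geometric series and the $\ell_1$ operator norm of $A_F/\gl$ are both of order $\eps\gl\gg1$.

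The missing idea is that $S_t=O(\gl_t)$ is a dynamical fact. The paper uses Cauchy--Schwarz, $\gl_tS_t=\sum_ud_u(t)v_u(t)\le U_t:=\sqrt{Q_t}$, and shows that $U_t$ cannot outgrow $\gl_t^2$. Before the adapted stop $\{M_t<m\}$, the increments satisfy the following.
\begin{itemize}
\item $0\le\gD U_t\le2$ with $\E_t\gD U_t\le(2\ga+3\gb)\gl_t/U_t$.
\item $0\le\gD\gl_t^2\le1$ with $\E_t\gD\gl_t^2\ge2\ga\kappa_t\ge2\ga m^3\gl_t/U_t$, since $\kappa_t\ge M_t^3/S_t\ge m^3\gl_t/U_t$.
\end{itemize}
Convex chord bounds then make $\exp\{a(U_t-C\gl_t^2)\}$ a nonnegative supermartingale for small $a>0$ and $C=(\log2)/a$. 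It is therefore bounded, which gives $U_t\le C\gl_t^2+O(1)$, $S_t\le2C\gl_t$, and $\gl_t\kappa_t\ge m^3/(2C)$. In the example above $U\asymp\eps\gl^3\gg\gl^2$, which is exactly what this comparison excludes. With this replacement, your Step 2 goes through.

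A minor point: for $D^\star_t=\Omega(t^\theta)$, use $\gl_t\le2\sqrt{D^\star_t-1}$ directly. Your fixed-vertex argument needs care because the vertex attaining $M_t$ may change with $t$; summing attachment rates over the finite core repairs it.
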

\begin{proof}
	First derive $S_t=O(\gl_t)$ dynamically. Fix a deterministic
	$m>0$ and a starting time $n$, and define the adapted stopping time
	$\tau=\inf\{t\geq n:M_t<m\}$. Put $U_t=\sqrt{Q_t}$, $z_t=\gD U_t$, and
	$x_t=\gD\gl_t^2$. Before this adapted stop,
	\begin{align*}
		0\leq z_t\leq2,\quad 0\leq x_t\leq1,\quad
		\E_t z_t\leq\frac{(2\ga+3\gb)\gl_t}{U_t},\quad
		\E_t x_t\geq\frac{2\ga m^3\gl_t}{U_t}.
	\end{align*}
	Here concavity of the square root gives
	$z_t\leq(d_{J_{t+1}}(t)+1)/U_t$, and
	$S_t\leq U_t/\gl_t$ gives
	$\kappa_t\geq m^3\gl_t/U_t$.
	Choose $a>0$ so small that
	$c_a(2\ga+3\gb)\leq\ga m^3/2$, where $c_a=(e^{2a}-1)/2$, and put
	$C=(\log2)/a$. Convex chord bounds yield
	\begin{align*}
		e^{az-aCx}\leq(1+c_a z)(1-x/2)\leq1+c_a z-x/2
		\quad(0\leq z\leq2,\ 0\leq x\leq1).
	\end{align*}
	In particular, before $\tau$, we have
	\begin{align*}
		\E_t e^{az_t-aCx_t}
		\leq 1+c_a\E_t z_t-\tfrac12\E_t x_t
		\leq 1-{\ga m^3\gl_t}/(2U_t)\leq1.
	\end{align*}
	No independence of the two increments is needed for this estimate.
	Thus, $(e^{a(U_{t\wedge\tau}-C\gl_{t\wedge\tau}^2)})_{t\geq n}$
	is a nonnegative supermartingale. Its initial value is integrable,
	since there are finitely many possible trees at the fixed time $n$.
	It converges and is therefore bounded almost surely. On the event that this stop never occurs,
	$U_t\leq C\gl_t^2+O(1)$, so $S_t\leq2C\gl_t$ eventually.
	The same bounds give
	$\gl_t\kappa_t\geq m^3\gl_t/S_t\geq m^3/(2C)>0$.
	Taking the countable union over $n$ and $m=1/k$ and using
	Corollary~\ref{cor:coremass} proves $S_t=O(\gl_t)$ and
	$\liminf_t\gl_t\kappa_t>0$.
	Finally, let $V_t:=\gl_t^3$. By~\eqref{eq:cubic-increment},
	\begin{align*}
		\E_t\gD V_t\geq3\gl_t(\ga\kappa_t+\gb/t).
	\end{align*}
	For its Doob martingale $L_t$, use the predictable gate
	\begin{align*}
		\chi_t=\ind_{\{\gl_t\leq5t^{\theta/2}\}},\quad
		\E_t(\chi_t\gD L_t)^2\leq Ct^{\theta}.
	\end{align*}
	The displayed variance bound follows from~\eqref{eq:cubic-increment}. Hence
	$\sum_t\E_t(\chi_t\gD L_t)^2/t^2
	\leq C\sum_t t^{-4/3}<\infty$, so
	$\sum_t\chi_t\gD L_t/t$ converges in $L^2$ and almost surely.
	As $\chi_t=1$ eventually by Proposition~\ref{prop:upper},
	$L_t=o(t)$. The positive pathwise lower bound on
	$\gl_t\kappa_t$ now gives $\gl_t^3\gtrsim t$.
	Combine with $D^\star_t\geq1+\gl_t^2/4$ and Proposition~\ref{prop:upper}.
	The spectral estimates give $\gl_t=\Theta(t^{\theta/2})$.
	If $h$ maximizes $v(t)$, its eigenvector equation gives
	$S_t\geq(1+\gl_t)M_t$. Core mass and the established upper bound
	give $S_t=\Theta(t^{\theta/2})$.
\end{proof}

\section{Competition and a persistent hub}\label{sec:competition}
In this section, we prove the uniqueness and persistence of the hub. For a vector with $r$ coordinates, $\cen(x)$ denotes the
vector obtained by subtracting the mean of its coordinates. Thus
\begin{align*}
	\cen(x)=x-\frac1r\sum_i x_i\cdot \vone.
\end{align*}
Here, $\vone$ is the all-ones vector of the relevant dimension. Matrix norms are
Euclidean operator norms and vector norms are Euclidean norms.

    \begin{thm}[One persistent hub]\label{thm:hub}
	Almost surely, there is a finite random vertex $h$ such that
	\begin{align*}
		d_h(t)=\gl_t^2+O(\gl_t),\quad
		\max_{u\ne h}d_u(t)=O(\gl_t),\quad
		v_h(t)\to2^{-1/2},\quad
		\max_{u\ne h}v_u(t)=O(\gl_t^{-1}).
	\end{align*}
	Thus $h$ is eventually the unique maximum-degree vertex.
\end{thm}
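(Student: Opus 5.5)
The plan follows the strategy sketched in Section~\ref{ssec:heu}: a reduction to a finite core, then a competition argument inside it, then a bootstrap to the sharp error terms.

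\textbf{Step 1: reduction to the core.} Fix a successful core $C=[N]$ from Proposition~\ref{prop:finite-core} with small $\eps$. Write $v=(z,w)$ as in Corollary~\ref{cor:coremass}. Eliminating the exterior forest gives an $N\times N$ Schur-complement equation
\[
\gl_t z=\bigl(A_C+B(\gl_t-A_F)^{-1}B^\top\bigr)z .
\]
Because each exterior component hangs off a single core vertex, the correction $B(\gl_t-A_F)^{-1}B^\top$ is diagonal. Its $i$th entry is $(d_i^{\rm ext}(t)+E_i(t))/\gl_t$, where the error $E_i(t)$ comes from exterior grandchildren. Using Proposition~\ref{prop:scales} ($\gl_t\asymp t^{1/3}$, $S_t=O(\gl_t)$) and the leaf-to-root ratio bound $v_w/v_{p(w)}\le 2/\gl_t$, I would show that $E_i=O(\text{second shell mass})$, so the diagonal entries equal $d_i/\gl_t+O(1)$. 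Thus $\gl_t^2 z\approx\diag(d_i)z+O(\gl_t)z$. This forces the following: every core vertex $i$ carrying non-negligible mass $z_i$ has $d_i=\gl_t^2+O(\gl_t)$, and every other core vertex has $d_i\le\gl_t^2-c\gl_t$ with $z_i=O(\gl_t^{-1})$ relative mass. Attachment rates are $\ga v_i/S_t\asymp z_i/\gl_t$. A core vertex with degree gap $\gg\gl_t$ therefore grows at rate $O(\gl_t^{-2})$. Comparing with the lead $\gl_t^2$, which grows like $t^{2/3}$, the gap keeps widening (the outward drift), so such vertices never return.

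\textbf{Step 2: competition.} Let $I\subseteq C$ be the set of candidates whose degrees stay within $O(\gl_t)$ of the maximum for infinitely long. I aim to show $|I|=1$. The reduced matrix on $I$ is $\gl_t^{-1}\diag(d_i)+O(1)$, nearly a multiple of the identity. Its top eigenvector is determined by the centered degree vector $x_t=\cen(d_i(t))_{i\in I}/\gl_t$: if $x_t$ is small, $z$ is nearly uniform, and a small imbalance shifts mass toward the leader. The increment of $d_i$ has mean $\ga z_i/S_t$, and the $z_i$ are increasing in $x$. Hence $\E_t\gD\norm{x_t}^2\ge (c/t)\,\norm{x_t}^2$ from reinforcement, plus $\ge c'/t$ from the attachment noise: each step adds a unit to one coordinate, and the variance is of order $1/(\gl_t^2\cdot\gl_t)\cdot\gl_t^2\asymp 1/t$ after scaling, using $\gl^3\asymp t$. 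Stop when $\norm{x_t}$ exits a fixed deterministic ball of radius $\rho$, with $\rho$ chosen so that the linearization remains valid. Then $\norm{x_{t\wedge\tau}}^2-\sum c'/s$ is a submartingale bounded by $\rho^2$, which forces $\tau<\infty$ a.s. After exit the imbalance is at least $\rho$, and the drift then amplifies it. Iterating shows that the gap grows past $O(\gl_t)$ scale, so some candidate drops out. Applying this to each of the finitely many subsets of $C$, with a countable union over stopping parameters as in the conventions, leaves exactly one survivor $h$. \textbf{I expect this step to be the main obstacle.} The difficulty is proving that the exit is irreversible: after exit the imbalance must not relax back into coexistence. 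I would handle this with a second supermartingale on $\log$ of the gap, exploiting the outward drift established in Step~1.

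\textbf{Step 3: conclusions.} With $h$ the unique survivor, the Schur identity on the single vertex $h$ gives $\gl_t^2=d_h(t)+O(\gl_t)$. The remaining mass bounds give $v_u=O(\gl_t^{-1})$ for $u\ne h$. For neighbors of $h$ this comes from $v_u\le 2v_h/\gl_t$. For other core vertices it comes from Step~1. For exterior vertices it comes from the ratio bound. Degrees $u\ne h$ are then $O(\gl_t)$: their attachment rates are $O(\gl_t^{-1}/S_t)=O(t^{-2/3})$, summing to $O(t^{1/3})$, and a martingale concentration argument via Lemma~\ref{lem:martingale-normalization}, made uniform over vertices by an exponential supermartingale as in Proposition~\ref{prop:finite-core}, completes it. Finally, $\sum_{u\sim h}v_u^2=\gl_t^2v_h^2/d_h\cdot(1+o(1))$ combined with the bipartite half-mass identity gives $v_h^2\to 1/2$.
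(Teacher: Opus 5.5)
Your Steps 1 and 3 are broadly compatible with the paper. Step 2, however, has a genuine gap that goes beyond the irreversibility issue you flag: the statistic $x_t=\cen(d_i(t))_{i\in I}/\gl_t$ does not control the Perron law on $I$.

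\textbf{The imbalance statistic.} Write $\bar d$ for the mean candidate degree. The reduced matrix is $\cH_t(\gl_t)=\gl_t^{-1}\bar d\,\mathrm{Id}+\diag(x_t)+W_t$ with $W_t=O(1)$. In the window, where $x_t=O(1)$ too, $W_t$ is not negligible. For instance, its diagonal contains $\gl_t^{-3}(BA_F^2B^\top)_{ii}$, which counts the paths $i\to a\to b$ with $a,b\in F$. For a candidate carrying positive mass this count is typically of order $t\asymp\gl_t^3$, so the term is of order one and in general differs between candidates. Hence your claims that the top eigenvector is ``determined by'' $x_t$, and that $z$ is nearly uniform when $x_t$ is small, are false.

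Concretely, the reinforcement part of the drift is
$2x_t\cdot\E_t\gD x_t=\frac{2P_t(1-\epsilon_t)}{\gl_t}\,x_t\cdot(z_t-r^{-1}\vone)+O(\gl_t^{-3}\norm{x_t})$ with $P_t\asymp\gl_t^{-1}$. When $x_t$ is smaller than, and points against, $\cen(\diag W_t)$, Perron mass shifts toward the candidate with the \emph{smaller} degree. This term is then negative of order $\norm{x_t}\gl_t^{-2}$, which swamps the noise term of order $\gl_t^{-3}\asymp 1/t$. So $\E_t\gD\norm{x_t}^2\geq c'/t$ fails.

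Moreover, nonadjacent candidates have coupling $0\le\cH_{ij}\le 2/\gl_t$. The Perron vector is therefore near-uniform only for imbalances of size $O(\gl_t^{-1})$, and no fixed radius $\rho$ makes a linearization valid.

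\textbf{What is missing.} You need the centered row sums $Y_t=\cen(\cH_t(\gl_t)\vone)$ of the \emph{exact} Schur matrix, together with the Perron repulsion identity
\[
Y\cdot(p-r^{-1}\vone)=\frac1r\sum_{i<j}A_{ij}\Bigl(\sqrt{\psi_j/\psi_i}-\sqrt{\psi_i/\psi_j}\Bigr)^2\geq c_L\norm{Y}^2\quad(\norm{Y}\le L),
\]
which requires no lower bound on the couplings. You also need an update of $Y_t$ at the fixed radius, with the shift in $\gl_t$ absorbed by the derivative bound:
\begin{itemize}
\item $\gD Y_t=(e_i-r^{-1}\vone)/\gl_t+O(\gl_t^{-2})$ on a candidate hit;
\item $\gD Y_t=O(\gl_t^{-3})$ on an exterior hit, via Sherman--Morrison.
\end{itemize}
This gives $\E_t\gD\norm{Y_t}^2\geq c_5/t$ before exit from \emph{any} bounded region.

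\textbf{Irreversibility is then unnecessary.} The paper never analyzes what happens after exit. By the bounded-stopped-drift lemma, with the exit-step overshoot controlled, each fixed region $\{\norm{Y_t}\le L,\ U_t\le K\gl_t\}$ intersected with the growth, exterior and mass gates is left almost surely. On a coexistence event $\cE_I$ with $|I|\geq2$, Lemma~\ref{lem:window} gives $U_t=O(\gl_t)$, hence $Y_t=O(1)$. So the path eventually stays forever in one such region with rational parameters, and a countable cover gives $\pr(\cE_I)=0$.

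Your planned log-gap supermartingale would not close the argument as stated. The outward drift from Step 1 exists only for degree spreads $\gg\gl_t$, not in the range $\rho\le\norm{x_t}\le K$, where the drift can point inward, as shown above.

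\textbf{Step 1 needs more as well.} The window argument shows that a large excursion of the spread returns only with summable probability. To exclude an excursion that never returns, it uses $U_t=o(\gl_t^2)$. That comes from the ratio convergence $d_i/\gl_t^2\to1$ on the active set (Lemma~\ref{lem:active}), a separate stochastic-approximation argument with Lyapunov function $\phi(x)=\max_ix_i-\sum_ix_i^2$ on core degree fractions. Defining $I$ as the vertices ``within $O(\gl_t)$ of the maximum for infinitely long'' does not supply it.
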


The proof has three stages. 
\begin{enumeratea}
    \item First, only finitely many vertices can retain degrees of order $\gl_t^2$. The finite active-set Lemma~\ref{lem:active} identifies the possible survivors. 
    \item Next, the competition-window Lemma~\ref{lem:window} shows that surviving candidates must have degrees within $O(\gl_t)$ of one another. 
    \item Finally, a squared imbalance statistic for their effective Perron matrix has positive drift even at perfect balance. A bounded stopped version of this statistic rules out two or more survivors.
\end{enumeratea}
We first strengthen the exterior bound in Proposition~\ref{prop:finite-core}.
Choose a successful connected old core $C=[N]$, with exterior degrees
at most $\eps\gl_t^2$, where $\eps<1/16$.
The directed tree recursion
$$
r_{u\to p}=(\gl_t-\sum_{w\text{ child of }u}r_{w\to u})^{-1}
$$
and induction from the leaves give $r_{u\to p}\leq2/\gl_t$.
Every exterior vertex therefore has
$v_u(t)\leq2\max_{i\in C}v_i(t)/\gl_t$. By Proposition~\ref{prop:scales},
\begin{align*}
	a_t(u)\leq ct^{-\theta}+\gb/t\text{ for }u\notin C.
\end{align*}
On a fixed stopped localization region, the number $X_t(u)$ of
post-start attachments to any specified vertex satisfies
$\E e^{X_t(u)}\leq\exp(c't^{\theta/2})$. This follows by iterating
$\E(e^{\gD X_s}\mid\cF_s)\leq
	\exp((e-1)(cs^{-\theta}+\gb/s))$. Unborn vertices have zero indicators.
For sufficiently large $A$, the probability of
$d_u(t)>At^{\theta/2}$ is at most $C_{\mathrm{loc}}e^{-ct^{\theta/2}}$.
Here $C_{\mathrm{loc}}$ and $c>0$ depend only on the fixed
localization bounds and starting time. The union bound over $u\leq t$
is summable over $t$.
Borel--Cantelli lemma and the countable localization convention yield
\begin{align}\label{eq:exterior}
	\max_{u\notin C}d_u(t)=O(\gl_t)\quad\text{almost surely}.
\end{align}

\begin{lem}[Finite Schur estimates]\label{lem:schur}
	Fix a nonempty finite set $I$ of $r$ vertices, all already born. Define $F=V(T_t)\setminus I$.
	Write the adjacency blocks as $A_I,B,A_F$. Suppose
	$\max_{u\in F}d_u(t)\leq C\gl_t$ and $t\leq C\gl_t^3$. For
	$x\in[\gl_t/2,2\gl_t]$, define
	\begin{align*}
		R(x)=(x\mathrm{Id}-A_F)^{-1},\quad
		\cH(x)=A_I+BR(x)B^\top,\quad \cK(x)=x\cH(x).
	\end{align*}
	For all sufficiently large $t$, uniformly on this interval,
	\begin{align}
		\cH(\gl_t)v_I & =\gl_tv_I,                                                        &
		\cK(x)            & =\diag(d_i(t):i\in I)+O(\gl_t),\label{eq:schur-size}   \\
		\cK'(x)           & =O(1),                                                                &
		\cH'(x)           & =-\diag(d_i(t)/x^2:i\in I)+O(\gl_t^{-1}).
		\label{eq:schur-derivative}
	\end{align}
	The off-diagonal part of $\cH(x)$ is nonnegative and bounded, and
	$\norm{BR(x)e_u}=O(\gl_t^{-1})$ for every $u\in F$.
\end{lem}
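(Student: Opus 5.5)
The plan is to treat $F$ as a forest whose spectral radius is much smaller than $\gl_t$, and to read off everything from the Neumann expansion of $R(x)$.

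\textbf{Step 1: the resolvent is well defined and uniformly bounded.} The forest bound after Lemma~\ref{lem:spectral} gives
\begin{align*}
	\norm{A_F}\leq2\sqrt{\max_{u\in F}d_u}\leq2\sqrt{C\gl_t}=o(\gl_t),
\end{align*}
because $\gl_t\to\infty$. Hence, for $x\in[\gl_t/2,2\gl_t]$ and all large $t$, $R(x)$ exists with $\norm{R(x)}\leq 4/\gl_t$, and $R(x)\geq0$ entrywise by the Neumann series.

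\textbf{Step 2: the eigenvalue identity.} Write $v=(v_I,v_F)$. The block form of $A_tv=\gl_tv$ gives $v_F=R(\gl_t)B^\top v_I$. Substituting into the $I$-row yields $\cH(\gl_t)v_I=\gl_tv_I$.

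\textbf{Step 3: the size estimate \eqref{eq:schur-size}.} Expand
\begin{align*}
	R(x)=x^{-1}\mathrm{Id}+x^{-2}A_F+x^{-2}A_FR(x)A_F.
\end{align*}
Then
\begin{align*}
	\cK(x)=xA_I+BB^\top+x^{-1}BA_FB^\top+x^{-1}BA_FR(x)A_FB^\top.
\end{align*}
Each term is handled as follows.
\begin{enumeratea}
	\item $xA_I=O(\gl_t)$, since $I$ is finite.
	\item $(BB^\top)_{ii}$ is the number of $F$-neighbors of $i$, that is, $d_i(t)$ minus the number of $I$-neighbors. So $BB^\top=\diag(d_i)+O(1)$ plus off-diagonal entries counting common $F$-neighbors of $i,j$. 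In a tree, two vertices share at most one common neighbor, so these entries are $O(1)$.
	\item The entry $(BA_FB^\top)_{ij}$ counts paths $i\to u\to w\to j$ with $u,w\in F$. Acyclicity means such a path determines the middle edge uniquely, so this is $O(1)$ and the term is $O(\gl_t^{-1})$.
	\item For the last term, use $\norm{A_FB^\top e_i}^2=\sum_{w}(\#\text{$F$-nbrs of $i$ adjacent to }w)^2$. By the tree property each $w\in F$ is adjacent to at most one neighbor of $i$, so this sum equals the number of length-two walks $i\to u\to w$ with $u,w\in F$. It is at most $\sum_{u\sim i}d_u\leq d_i\,C\gl_t$.
\end{enumeratea}
The degree bound $d_i\leq D^\star_t\leq\gl_t^2$ then gives a bound of $C\gl_t^3\cdot x^{-1}\cdot 4/\gl_t=O(\gl_t)$.

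This last term is the main obstacle. The crude bound $\norm{B}^2\norm{A_F}^2\norm R/x$ is too weak, so I would use the bound just described. If it turns out to be insufficient, I would fall back on the path-counting structure of trees: only one $F$-edge emanates toward each branch.

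\textbf{Step 4: the remaining claims.} The same expansion gives the off-diagonal part of $\cH$:
\begin{align*}
	(A_I)_{ij}+x^{-1}(BB^\top)_{ij}+O(\gl_t^{-1}),
\end{align*}
which is nonnegative (since $R\geq0$) and bounded. For $u\in F$, $BR(x)e_u=x^{-1}Be_u+O(\norm{B A_F R e_u})$. Here $Be_u$ has at most one entry per $I$-neighbor, so is $O(1)$ in norm, and the correction is $O(\gl_t^{-1})$ by the same walk counting. Hence $\norm{BR(x)e_u}=O(\gl_t^{-1})$.

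For the derivatives, $R'(x)=-R(x)^2$, so
\begin{align*}
	\cK'(x)=A_I+B\bigl(R(x)-xR(x)^2\bigr)B^\top.
\end{align*}
Writing $R-xR^2=-R A_F R$ (from $xR=\mathrm{Id}+A_FR$), we get $\cK'(x)=A_I-BRA_FRB^\top$. Expanding $R$ once more, the leading piece is $x^{-2}BA_FB^\top=O(\gl_t^{-2})$, and the remainder is controlled by Step 3's walk bound, giving $O(1)$. Finally $\cH=\cK/x$ gives
\begin{align*}
	\cH'(x)=\frac{\cK'(x)}{x}-\frac{\cK(x)}{x^2}=-\diag\!\left(\frac{d_i}{x^2}\right)+O(\gl_t^{-1}),
\end{align*}
using \eqref{eq:schur-size}.
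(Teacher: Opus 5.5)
Your argument does not establish the last assertion, $\norm{BR(x)e_u}=O(\gl_t^{-1})$. The exact identity is $BR(x)e_u=x^{-1}Be_u+x^{-1}BA_FR(x)e_u$. The walk count from your Step 3 gives only $\norm{BA_F}^2\le\sum_{i\in I}d_i\max_{w\in F}d_w=O(\gl_t^{3})$. So the correction is bounded by $x^{-1}\norm{BA_F}\norm{R(x)}=O(\gl_t^{-1/2})$, not $O(\gl_t^{-1})$. The loss matters downstream, where the Sherman--Morrison update $E_u$ must be $O(\gl_t^{-3})$.

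A global count over all two-step walks from $i$ cannot capture that $u$ communicates with $i$ through a single branch. The missing idea is localization by components of $F$. The resolvent $R(x)$ is block diagonal over these components. A component contains at most one neighbor $w_i$ of a given $i\in I$, since two would close a cycle through $i$. Hence $(BR(x)e_u)_i$ is either zero or the single entry $R(x)_{w_iu}$. This gives $|(BR(x)e_u)_i|\le\norm{R(x)}\le4/\gl_t$, and so $\norm{BR(x)e_u}\le4\sqrt r/\gl_t$. The paper uses exactly this component observation together with the cavity bound $0\le R(x)_{ab}\le(2/x)^{\dist_F(a,b)+1}$. That bound also yields the explicit estimate $0\le\cH_{ij}(x)\le(A_I)_{ij}+2/x$.

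The rest of your proposal is correct and differs only mildly from the paper. The paper bounds $\norm{BA_F}^2\le\tr(BA_F^2B^\top)\le rt$ and then invokes $t\le C\gl_t^3$. You instead count two-step walks as at most $d_i\max_{w\in F}d_w\le C\gl_t^3$, using $d_i\le D^\star_t\le\gl_t^2$. This makes the hypothesis $t\le C\gl_t^3$ unnecessary. Your closed-form remainder $x^{-1}BA_FRA_FB^\top$ and the quotient rule $\cH'=\cK'/x-\cK/x^2$ cleanly replace the paper's termwise Neumann-series bounds.

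There is one small misstatement. With your norm bounds, the remainder in the off-diagonal entries of $\cH$ is $x^{-2}\norm{BA_FRA_FB^\top}=O(1)$, not $O(\gl_t^{-1})$. Nonnegativity (from $R\ge0$) and boundedness still follow, and that is all the lemma asserts.
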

\begin{proof}
	Applying the forest bound of Lemma~\ref{lem:spectral} componentwise,
	with norm zero for an edgeless component, gives
	$\norm{A_F}\leq2\sqrt{\max_{u\in F}d_u(t)}=O(\sqrt{\gl_t})$. Eliminating the exterior Perron
	equation gives the first identity, while the convergent expansion is
	\begin{align*}
		\cK(x)=xA_I+BB^\top+\sum_{k\geq1}x^{-k}BA_F^kB^\top.
	\end{align*}
	Both $BB^\top-\diag(d_i(t):i\in I)$ and $BA_FB^\top$ are
	bounded. The diagonal discrepancy in the first is the bounded
	internal degree, and its off-diagonal entries count length-two paths.
	The second counts length-three paths through $F$. Each such path is
	unique in a tree. Each entry of $A_FB^\top$ is at most one, so
	$\norm{BA_F}^2\leq\tr(BA_F^2B^\top)\leq rt$.
	Consequently
	$\norm{BA_F^kB^\top}\leq rt\norm{A_F}^{k-2}$ for $k\geq2$.
	Since $x\asymp\gl_t$ and $\norm{A_F}/x=O(\gl_t^{-1/2})$,
	the terms with $k\geq2$ have total norm $O(t/x^2)=O(\gl_t)$.
	Their derivatives in $\cK$ have total norm $O(t/x^3)=O(1)$.
	For $\cH=\cK/x$, the corresponding derivative remainder is
	$O(t/x^4)=O(\gl_t^{-1})$. The terms $xA_I$, $BB^\top$ and
	$x^{-1}BA_FB^\top$ satisfy the asserted bounds directly. This proves~\eqref{eq:schur-size}--\eqref{eq:schur-derivative}.

	The same cavity induction, now with exterior maximum degree
	$O(\gl_t)=o(x^2)$, gives
	\begin{align*}
		0\leq R(x)_{ab}\leq(2/x)^{\dist_F(a,b)+1}
	\end{align*}
	within each component, and zero between components. The diagonal Green
	entry is at most $2/x$, and the unique path contributes one cavity
	factor, at most $2/x$, per edge. A component of $F$ contains at most
	one neighbor of any specified $i\in I$, since otherwise there would
	be a cycle through $i$. Hence each coordinate of $BR(x)e_u$ is at
	most $2/x$, proving the final assertion. For distinct $i,j\in I$,
	at most one exterior component can connect a neighbor of $i$ to a
	neighbor of $j$, and both endpoint neighbors in that component are
	unique; otherwise the tree would contain a cycle. The Green bound
	therefore gives the explicit estimate
	$
		0\leq\cH_{ij}(x)\leq(A_I)_{ij}+2/x.
	$
\end{proof}

The next Lyapunov argument follows the stochastic-approximation
viewpoint discussed in~\cite{Benaim1999}. We give its convergence
steps directly, since the effective Schur matrix depends on time.

\begin{lem}[A finite active set]\label{lem:active}
	Almost surely, some nonempty finite random set $I$ satisfies
	\begin{align*}
		\max_{i\in I}\abs{d_i(t)/\gl_t^2-1}\to 0,\quad
		\max_{u\notin I}d_u(t)=O(\gl_t),\quad
		\sum_{i\in I}v_i(t)\geq c>0
	\end{align*}
	eventually. Moreover $\gl_t^2=D^\star_t+O(\gl_t)$.
\end{lem}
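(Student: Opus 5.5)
We work inside the successful old core $C=[N]$ of Proposition~\ref{prop:finite-core}, with the exterior bound~\eqref{eq:exterior}. The plan is to show that each core vertex has degree either of order $\gl_t^2$ or $O(\gl_t)$. We then take $I$ to be the set of core vertices of the first kind and show that they all have $d_i(t)\sim\gl_t^2$.

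\textbf{Step 1: Schur reduction on the whole core.} We apply Lemma~\ref{lem:schur} with $I=C$. Its hypotheses hold eventually: $\max_{u\notin C}d_u(t)=O(\gl_t)$ by~\eqref{eq:exterior}, and $t\le C\gl_t^3$ by Proposition~\ref{prop:scales}. Then $\gl_t v_C=\cH(\gl_t)v_C$. Multiplying by $\gl_t$ gives
\begin{align*}
\gl_t^2 v_C=\cK(\gl_t)v_C=\bigl(\diag(d_i(t))+O(\gl_t)\bigr)v_C .
\end{align*}
Thus $\gl_t^2$ is the top eigenvalue of a finite symmetric matrix that is diagonal up to an $O(\gl_t)$ perturbation. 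Weyl's inequality then gives $\gl_t^2=\max_{i\in C}d_i(t)+O(\gl_t)=D^\star_t+O(\gl_t)$. Exterior degrees are $O(\gl_t)$, so $D^\star_t$ is attained in $C$ eventually. This proves the last assertion.

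\textbf{Step 2: Localization of $v_C$.} Consider the coordinate equation $(\gl_t^2-d_i(t))v_i=O(\gl_t)\norm{v_C}$. Any $i$ with $\gl_t^2-d_i(t)\ge\delta\gl_t^2$ therefore has $v_i=O(1/(\delta\gl_t))$. Since $\norm{v_C}^2\ge(3/5)^2$ by the proof of Corollary~\ref{cor:coremass}, the mass of $v_C$ must concentrate on the coordinates with $d_i(t)\ge(1-\delta)\gl_t^2$. Hence $\sum_{i:\,d_i\ge(1-\delta)\gl_t^2}v_i(t)\ge c>0$ for every fixed $\delta$, eventually.

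\textbf{Step 3: Dichotomy of core degrees.} This is the main obstacle. Each $i\in C$ is a random-walk-like process. Its attachment rate is $\ga v_i/S_t+\gb/t$, with $S_t\asymp\gl_t\asymp t^{1/3}$. By Step 2, a vertex whose degree ratio $\rho_i=d_i/\gl_t^2$ stays below $1-\delta$ has $v_i=O(\gl_t^{-1})$. Its rate is then $O(t^{-2/3})$, so its degree grows like $O(t^{1/3})=O(\gl_t)$.

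The vertices near the top behave differently. The combined degree of the near-top set grows at rate $\asymp t^{-1/3}$ and matches the growth of $\gl_t^2$. To separate the two regimes, I would run a Lyapunov argument on $\rho_i(t)$ in the spirit of~\cite{Benaim1999}. Using Lemma~\ref{lem:pendant}, $\gl_t^2$ grows by at least $2v_{J}^2$ at each step. A vertex with $\rho_i\le1-\delta$ is fed only at rate $O(t^{-2/3})$, while $\gl_t^2$ grows at rate $\gtrsim t^{-1/3}$. Consequently $\rho_i\to0$, and in fact $d_i=O(\gl_t)$ by the same exponential-moment bound used for~\eqref{eq:exterior}.

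Conversely, suppose $\rho_i$ stays in $[1-\delta,1]$ infinitely often but does not converge to $1$. At any time when $\rho_i<1-\delta$, the argument above pushes it downward, and it can never return. The martingale errors are controlled by Lemma~\ref{lem:martingale-normalization} with $b_t=t^{2/3}$. The variance of the degree increments is $O(t^{-1/3})$ per step, and $\sum_t t^{-1/3}/t^{4/3}<\infty$.

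Making this rigorous requires stopping at the exit from deterministic rational bounds, following the paper's localization convention. Along the way one handles the band $(\delta',1-\delta)$ by showing that it is crossed downward only, with noise $o(\gl_t^2)$.

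\textbf{Step 4: Conclusion.} Let $I=\{i\in C:\rho_i(t)\to1\}$. This set is nonempty, because $D^\star_t/\gl_t^2\to1$ and $C$ is finite. Every other core vertex has degree $O(\gl_t)$ by Step 3, and exterior vertices do as well by~\eqref{eq:exterior}. Finally, Step 2 gives $\sum_{i\in I}v_i(t)\ge c$.
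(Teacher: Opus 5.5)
Your proposal is correct. Steps~1, 2 and 4 are essentially the paper's, but Step~3 replaces the paper's central argument with a different and more elementary one.

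The paper does not argue vertex by vertex. It follows the proportion vector $x_i=d_i(t)/\sum_{j\in C}d_j(t)$ through the statistic $\cV_t=\sum_{i\in C}x_i^2$, whose drift is $\gamma_t(\phi(x)+\epsilon_t)$ with $\phi(x)=\max_ix_i-\sum_ix_i^2\geq0$ and $\gamma_t\asymp1/t$. Boundedness gives $\sum_t\gamma_t\phi(x(t))<\infty$, and slow variation of $x$ upgrades this to $\phi(x(t))\to0$. The zeros of $\phi$ are the uniform laws on nonempty subsets of $C$, so $x(t)$ converges to one of them. Only then does a stopped exponential bound at the fixed threshold $d_j\leq\gl_t^2/2$ bring the inactive degrees down to $O(\gl_t)$.

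Your trapping argument is essentially that last step run at every rational threshold, and it makes the simplex Lyapunov function unnecessary. Suppose $\rho_j(t_1)<1-2\delta$ at a late time. While $\rho_j<1-\delta$, the Schur coordinate equation gives $v_j=O(1/(\delta\gl_t))$, so the compensator of $d_j$ over $[t_1,s]$ is $O(s^{1/3}/\delta)$. The martingale part is $o(s^{2/3})$ already from the trivial bound $\E\xi_{t+1}^2\leq1$ in Lemma~\ref{lem:martingale-normalization} with $b_t=t^{2/3}$, so no random variance bound is needed. Because $\gl_s^2$ is nondecreasing and $\asymp s^{2/3}$, $\rho_j$ never climbs back to $1-\delta$. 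This gives the dichotomy $\rho_j\to1$ or $\rho_j\to0$ for each core vertex. Nonemptiness of $I$ then follows from $\max_{i\in C}\rho_i\to1$ by pigeonhole \emph{together with} the dichotomy; pigeonhole alone only gives some $i$ with $\limsup_t\rho_i=1$.

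Your route buys brevity. The paper's buys an explicit picture of Perron selection reinforcing the current maxima of the core degree vector, the same mechanism that is quantified in the later competition arguments.

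When you write it up, make three points explicit. First, state a margin between the entry and trapping thresholds: with a single $\delta$, the claim that $\rho_i$ can never return is not literally true, since one hit can cross the level. Second, describe Step~3 as a trapping argument rather than a Lyapunov argument. Third, in Step~1 justify that $\gl_t^2$ is the \emph{top} eigenvalue of $\cK(\gl_t)$ before invoking Weyl's inequality. This holds because $\cK(\gl_t)$ is entrywise nonnegative by the Neumann expansion and has the positive eigenvector $v_C$.
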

\begin{proof}
	Apply Lemma~\ref{lem:schur} to $C$ in~\eqref{eq:exterior}.
	The matrix $\cK(\gl_t)$ is nonnegative, since its resolvent has a
	convergent nonnegative Neumann expansion. Its eigenvector $v_C$ is
	strictly positive. Hence, its eigenvalue $\gl_t^2$ is the spectral
	radius. Therefore
	$\gl_t^2=\max_{i\in C}d_i(t)+O(\gl_t)=D^\star_t+O(\gl_t)$.
	The core mass bound of Corollary~\ref{cor:coremass} gives
	$\norm{v_C(t)}\geq c>0$. 

    Define
	$D_t=\sum_{i\in C}d_i(t)$, $x_i=d_i(t)/D_t$, $\cV_t=\sum_{i\in C}x_i^2$,
	and $P_t=\sum_{i\in C}a_t(i)=\Theta(\gl_t^{-1})$.
	Here $D_t=\Theta(\gl_t^2)$. Summing the Schur equation gives
	\begin{align*}
		\frac{\sum_{i\in C}d_i(t)v_i(t)}{\sum_{i\in C}v_i(t)}
		\geq\max_{i\in C}d_i(t)-C\gl_t.
	\end{align*}
	The conditional parent law $p$ on a core hit is the Perron law mixed
	with the uniform law, whose weight is $O(\gl_t/t)$.
	Thus $\sum_i x_ip_i=\max_i x_i+\epsilon_t$, with
	$-C/\gl_t\leq\epsilon_t\leq0$.
	Writing 
    \[
    \phi(x)=\max_i x_i-\sum_i x_i^2\geq0,
    \]
    a core hit at $i$
	updates $x$ to $(D_tx+e_i)/(D_t+1)$ and gives exactly
	\begin{align*}
		\gD\cV_t
		=\frac{2D_t(x_i-\cV_t)+(1-\cV_t)}{(D_t+1)^2}.
	\end{align*}
	Outside a core hit the statistic is unchanged. Averaging this identity
	gives
	\begin{align*}
		\E(\gD \cV_t\mid\cF_t)
		=\gc_t\,(\phi(x(t))+\epsilon_t)+O(P_t/D_t^2),
		\quad\gc_t=\frac{2P_t}{D_t+1}=\Theta(t^{-1}).
	\end{align*}
	The negative error is 
    $O(t^{-(1+\theta/2)})$, and $P_t/D_t^2=O(t^{-(1+\theta)})$.
	On a region stopped at $\tau$, the corrected process
	\begin{align*}
		\cV_{t\wedge\tau}
		+C_1\sum_{s=n}^{t-1}\ind_{\{s<\tau\}}s^{-(1+\theta/2)}
	\end{align*}
	is a bounded submartingale with drift at least
	$\ind_{\{t<\tau\}}\gc_t\phi(x(t))$.
	Its convergence and bounded expectation imply, on $\{\tau=\infty\}$,
	$\cV_t$ converges and $\sum_t\gc_t\phi(x(t))<\infty$.

	Each coordinate of $x$ has drift $O(t^{-1})$ and martingale
	variance bounded by $\sum_tO(t^{-(1+\theta)})<\infty$.
	Its martingale tails therefore tend uniformly to zero, while its
	drift on $[t,(1+\eta)t]$ is $O(\log(1+\eta))=O(\eta)$.
	Suppose that $\phi(x(t))\geq\delta>0$ at arbitrarily late times.
	The function $\phi$ is uniformly continuous on the compact simplex.
	Choose $\varepsilon>0$ so that simplex points at distance at most
	$\varepsilon$ have $\phi$ values differing by at most $\delta/2$.
	Then choose a fixed $\eta>0$ so small that the drift displacement over
	$[t,(1+\eta)t]$ is at most $\varepsilon/2$.
	Uniformly vanishing martingale tails bound their additional displacement
	by $\varepsilon/2$ for all sufficiently large $t$.
	Thus $\phi(x(s))\geq\delta/2$
	throughout every such interval starting at a sufficiently late time
	with $\phi(x(t))\geq\delta$.
    Extract infinitely many disjoint intervals
	of this form. Since $\gc_s\asymp1/s$, each contributes a fixed
	positive amount to $\sum_s\gc_s\phi(x(s))$, a contradiction.
	Therefore $\phi(x(t))\to0$. 
    
    To identify its zero set, let $m=\max_i x_i$. Then
	$\phi(x)=\sum_i x_i(m-x_i)$, so $\phi(x)=0$ exactly when every
	positive coordinate equals $m$. These are the uniform laws on nonempty
	subsets of $C$. This is a finite set. Since $\phi(x(t))\to0$,
	$x(t)$ approaches this set. Also $\gD x(t)\to0$, so it cannot pass
	infinitely often between disjoint sufficiently small neighborhoods of
	different members without leaving their union. It therefore converges
	to one member, uniform on a nonempty subset $I$. The relation $$\gl_t^2=\max_{i\in C}d_i(t)+O(\gl_t)$$ now gives
	$d_i(t)/\gl_t^2\to1$ for $i\in I$, and $d_j(t)/\gl_t^2\to0$ otherwise.
    
	It remains to improve the degree bound for inactive core vertices.
	First, fix a deterministic core $C=[N]$, a label $j\in C$, a starting
	time, and rational core and scale bounds. Stop at the first failure of
	$d_j(t)\leq\gl_t^2/2$ or of these bounds. Before this stop, its Schur equation has
	diagonal gap $\gl_t-\cH_{jj}(\gl_t)\geq\gl_t/3$ for all
	sufficiently large $t$. The bounded off-diagonal entries therefore give
	$v_j\leq C_1/\gl_t$, hence $a_t(j)\leq C_2t^{-\theta}$.
	The stopped exponential concentration argument used in~\eqref{eq:exterior} gives $d_j(t)=O(\gl_t)$ on every event where
	these bounds never fail. For each inactive label the degree-ratio limit
	ensures that some deterministic starting time and bounds succeed.
	Taking the countable union over $N$, labels, starting times and bounds
	proves the required bound for all inactive core vertices.
	The core mass bound then gives the asserted positive active mass.
\end{proof}

The active-set lemma permits several survivors with equal leading
degrees. We now narrow the possible ties. A degree imbalance much larger
than $\gl_t$ has outward drift, and a return to the smaller window
is exponentially unlikely. Such an excursion cannot persist on an
active-set event, where all degree differences are $o(\gl_t^2)$.

\begin{lem}[The competition window]\label{lem:window}
	For a deterministic finite nonempty $I$, define
	$L_t^I=\sum_{i\in I}v_i(t)$,
	$U_t^I=\norm{\cen(d_i(t):i\in I)}$, and
	\begin{align}\label{eq:active-events}
		\cE_I=
		\left\{{d_i(t)}/{\gl_t^2}\to1\ \text{ for }i\in I,\quad
		\max_{u\notin I}d_u(t)=O(\gl_t),\quad
		\liminf_t L_t^I>0\right\}.
	\end{align}
	Simultaneously, for all such deterministic $I$, almost surely on
	$\cE_I$ we have $U_t^I=O(\gl_t)$.
	Consequently, the bound holds for the finite random active set in
	Lemma~\ref{lem:active}.
\end{lem}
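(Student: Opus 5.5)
On $\cE_I$ I would show that the centred degree vector $\cen(d_i(t):i\in I)$ feels an outward drift once it is large compared with $\gl_t$, and that a large excursion, once started, almost surely never returns. Since $\cE_I$ forces all differences to be $o(\gl_t^2)$, this rules out unbounded excursions at scale $\gl_t$.

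\textbf{Step 1 (drift of the Perron weights).} Apply Lemma~\ref{lem:schur} to $I$; its hypotheses hold eventually on $\cE_I$ by Proposition~\ref{prop:scales}. Writing $\cK(\gl_t)v_I=\gl_t^2v_I$ with $\cK=\diag(d_i)+E$, $\norm E=O(\gl_t)$, gives
\begin{align*}
(\gl_t^2-d_i(t))v_i=(Ev_I)_i .
\end{align*}
Since $\liminf L^I_t>0$ and $|I|$ is fixed, each $v_i$ is bounded below for at least one $i$. Comparing two coordinates shows $v_i$ is increasing in $d_i$ up to errors controlled by $\norm E$: $\gl_t^2-d_i=O(\gl_t)$ for the maximizer. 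More precisely, I aim for a linearization
\begin{align*}
v_i-v_j\geq c\,(d_i-d_j)/\gl_t^2-C/\gl_t .
\end{align*}
This follows from first-order perturbation of the Perron vector of the $r\times r$ matrix $\cH(\gl_t)$, using the derivative bounds~\eqref{eq:schur-derivative}. The attachment probabilities $a_t(i)\approx\ga v_i/S_t$ with $S_t\asymp\gl_t$ then satisfy
\begin{align*}
\langle\cen(d),\cen(a_t)\rangle\geq\frac{c}{\gl_t^3}\,U^2-\frac{C}{\gl_t^2}\,U ,
\end{align*}
where $U=U_t^I$.

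\textbf{Step 2 (supermartingale for returns).} Since $\gD\cen(d)$ is a centred unit-vector step with probability $\sum_i a_t(i)=\Theta(\gl_t^{-1})$,
\begin{align*}
\E_t\gD (U^2)\geq 2\langle\cen d,\cen a_t\rangle+c'\gl_t^{-1},
\qquad
\E_t\gD (U^2)\gtrsim U^2/\gl_t^3 \quad\text{once } U\geq A\gl_t
\end{align*}
for a large deterministic $A$. Because $U$ is $1$-Lipschitz per step and the jump rate is $\Theta(\gl_t^{-1})$, an exponential functional $e^{-\mu U_t/\gl_t}$ is a supermartingale while $U_t\ge A\gl_t$. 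Here $\mu$ is small, and $\gl_t$ varies slowly: $\gD\gl_t=O(t^{-2/3})$ by~\eqref{eq:cubic-increment}. The optional stopping bound then says: starting from $U=2A\gl_t$, the probability of ever returning to $A\gl_t$ is at most $e^{-\mu A}$. More strongly, after reaching $kA\gl_t$ the return probability is $e^{-c k}$.

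\textbf{Step 3 (conclusion).} I work with deterministic labels, starting times and rational bounds, stopped at the first failure as in the paper's convention. If $U_t/\gl_t$ is unbounded on $\cE_I$, then there are infinitely many excursions above level $kA\gl_t$ for every $k$. Outside an excursion the drift is still positive (the $c'\gl_t^{-1}$ term) while the noise is bounded. Each upcrossing from $A\gl_t$ to $kA\gl_t$ is followed by a return only with probability $e^{-ck}$. Hence, by a conditional Borel--Cantelli argument, some excursion never returns, and along it $U^2$ grows with drift $\gtrsim U^2/\gl_t^3\asymp U^2/t$. That forces $U\gtrsim t^{c}\cdot U_{\text{start}}$, indeed growth toward order $\gl_t^2$. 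More carefully, integrating $\gD\log U\gtrsim 1/t$ gives $U$ eventually comparable to $\gl_t^2$, contradicting $d_i/\gl_t^2\to1$ for all $i\in I$, i.e.\ $U=o(\gl_t^2)$. The statement for the random active set follows by the countable union over deterministic $I$.

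\textbf{Main obstacle.} The hard part is Step 1: obtaining the linear lower bound on $v_i-v_j$ in terms of $(d_i-d_j)/\gl_t^2$ with an error of only $O(1/\gl_t)$. This needs the off-diagonal part of $\cH$ to be bounded, and a spectral gap argument for the perturbed nearly scalar matrix $\gl_t^{-2}\cK$. If the sharp version fails, I would settle for monotonicity $d_i\ge d_j+A\gl_t\Rightarrow v_i\ge v_j$ (with a quantitative gap), which already suffices for the outward drift.
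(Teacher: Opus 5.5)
\textbf{There is a genuine gap in the drift estimate.} Your overall architecture is the paper's: outward drift once $U_t\gg\gl_t$, exponentially unlikely returns from large excursions, and a contradiction with $U_t=o(\gl_t^2)$ on $\cE_I$. But the drift estimate that carries all of this is false as proposed, and even granted it would be too weak for Step 3.

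\textbf{Step 1 is false.} The pairwise linearization $v_i-v_j\ge c(d_i-d_j)/\gl_t^2-C/\gl_t$ fails, and so does the fallback monotonicity once $|I|\ge3$.
\begin{enumeratea}
\item \emph{Near ties.} Here $\cH(\gl_t)-\gl_t\mathrm{Id}$ is an $O(1)$ matrix. Its diagonal carries $O(1)$ terms not determined by degrees; for example, $\gl_t^{-3}$ times the number of grandchildren of $i$ through $F$, which is of order one when that number is $\asymp t$. Its couplings can be $O(\gl_t^{-1})$. So its top spectral gap can be arbitrarily small, first-order perturbation gives no uniform control, and $v_j-v_i=\Theta(1)$ is possible with $d_i-d_j\asymp\gl_t$.
\item \emph{Large gaps.} Take $d_1=\gl_t^2$, $d_2=\gl_t^2-A\gl_t$, $d_3=\gl_t^2-2A\gl_t$, with $1\sim3$ (so $\cK_{13}\approx\gl_t$) and $2$ far from both (so $\cK_{12},\cK_{23}\le2$). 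The Schur equation gives $v_3\approx v_1/(2A)$ but $v_2=O(1/(A\gl_t))$. Thus $v_2<v_3$ although $d_2-d_3=A\gl_t$.
\end{enumeratea}

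\textbf{The drift is too weak even if granted.} Your bound $\E_t\gD(U^2)\gtrsim U^2/\gl_t^3\asymp U^2/t$ only gives $\E_t\gD\log U\gtrsim c/t$, hence $U_t\gtrsim t^{c}$ for an unidentified $c>0$. It does not give $U_t\asymp\gl_t^2\asymp t^{2/3}$; if $c<2/3$ the ratio $U_t/\gl_t^2$ can still tend to zero, so there is no contradiction. Also, your remark that the drift stays positive inside the window is wrong: there $2\langle\cen d,\cen a_t\rangle$ can be $-\Theta(1)$. It is not needed, though.

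\textbf{The missing idea: sum the Schur equation instead of comparing pairs.} Write $\cK(\gl_t)v_I=\gl_t^2v_I$ with $\cK=\diag(d_i)+E$ and $\norm{E}=O(\gl_t)$, and pair it with $\vone$.
\begin{enumeratea}
\item By Weyl, $\gl_t^2\ge\max_id_i-\norm{E}$. Also $|\vone^\top Ev_I|\le\sqrt r\norm{E}\sum_iv_i$. Together these give $\sum_id_iv_i\ge(\max_id_i-C\gl_t)\sum_iv_i$: the Perron-weighted mean candidate degree is within $O(\gl_t)$ of the maximum. In the example above it is $\approx\gl_t^2-\gl_t$.
\item The uniform part of the parent law shifts this mean by only $O(1)$.
\item Convexity of the norm gives $\E_t\gD U_t\ge(P_t/U_t)(\max_id_i-\bar d-C\gl_t)$. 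A zero-sum $r$-vector has maximum at least its norm divided by $\sqrt{r(r-1)}$. Hence $\E_t\gD U_t\ge a_0\gl_t^{-1}\asymp t^{-1/3}$ whenever $U_t\ge K\gl_t$, a drift that does not degrade as $U_t/\gl_t^2$ shrinks.
\item Summed along a non-returning excursion, this drift forces $U_t\gtrsim t^{2/3}$, which is the needed contradiction.
\item Since $\E_t(\gD U_t)^2\le P_t$ is of the same order, $e^{-\zeta(U_t-Kt^{1/3})}$ is a supermartingale for a fixed $\zeta$. A return from $2Ks^{1/3}$ then has probability at most $e^{-\zeta Ks^{1/3}}$, which is summable over deterministic start times $s$. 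This lets a plain Borel--Cantelli argument replace your $e^{-ck}$ upcrossing scheme, whose logic is otherwise sound.
\end{enumeratea}
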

\begin{proof}
	The events $\cE_I$ belong to
	$\cF_\infty=\gs(\cup_t\cF_t)$. Asymptotic $O$ bounds are countable
	unions of eventual bounds with rational constants. Lemma~\ref{lem:active}
	gives $\pr(\cup_I\cE_I)=1$. We will use these events by
	intersection only after proving the stopped estimates.
	Only $r=|I|\geq2$ needs consideration. Abbreviate $U_t=U_t^I$.

	\smallskip\noindent\emph{Local bounds and outward drift.}
	Fix a deterministic set $I$,
	a deterministic $n\geq\max\{t_0,\max I\}$, and rational constants
	$0<c<C$. Define the adapted gates
	\begin{align}\label{eq:competition-gates}
		\cG_t=
		\left\{ct^{\theta/2}\leq\gl_t,S_t\leq Ct^{\theta/2},\quad
		\max_{u\notin I}d_u(t)\leq C\gl_t,\quad L_t^I\geq c\right\}.
	\end{align}
	Let $\tau=\inf\{t\geq n:\cG_t\text{ fails}\}$, with
	$\tau=n$ if the initial gates fail. All estimates that follow are
	proved on $\{t<\tau\}$ for this fixed $I$. No limiting property of
	the random active set is used to obtain them.

	Write $P_t=\sum_{i\in I}a_t(i)$ and let $p_t$ be the conditional parent
	law on an $I$ hit. The gates give deterministic two-sided bounds
	$P_t\asymp\gl_t^{-1}\asymp t^{-\theta/2}$. The summed Schur equation
	gives a Perron-weighted mean degree at least $\max_{i\in I}d_i(t)-C_1\gl_t$.
	The uniform component of $p_t$ has weight
	$r\gb/(tP_t)=O(\gl_t/t)$ and changes this mean by at most $O(1)$,
	since $\max_{i\in I}d_i(t)\leq\gl_t^2=O(t^{\theta})$. Thus
	\[
    \sum_{i\in I}d_i(t)p_t(i)\geq\max_{i\in I}d_i(t)-C_2\gl_t.
    \]
	Writing $\bar d=r^{-1}\sum_{i\in I}d_i(t)$, convexity of the norm implies,
	for $U_t>0$,
	\begin{align*}
		\E(\gD U_t\mid\cF_t)
		\geq\frac{P_t}{U_t}(\max_{i\in I}d_i(t)-\bar d-C_2\gl_t).
	\end{align*}
	A zero-sum $r$-vector has maximum at least its norm divided by
	$\sqrt{r(r-1)}$. For a sufficiently large deterministic $K$,
	$b(t)=Kt^{\theta/2}$ therefore satisfies
	\begin{align*}
		U_t\geq b(t)\quad\longrightarrow \quad
		\E_t\gD U_t\geq a_0t^{-\theta/2}.
	\end{align*}
	Also $|\gD U_t|\leq1$, and it is zero outside an $I$ hit, so
	$\E_t(\gD U_t)^2\leq P_t\leq A_0t^{-\theta/2}$ throughout the gates.
	For all sufficiently large $t$, $V_t=U_t-b(t)$ consequently satisfies,
	when $U_t\geq b(t)$ and $t<\tau$,
	\begin{align*}
		|\gD V_t|\leq2,\quad
		\E_t\gD V_t\geq\tfrac12a_0t^{-\theta/2},\quad
		\E_t(\gD V_t)^2\leq A_1t^{-\theta/2}.
	\end{align*}
	The inequality
	$e^{-\zeta z}\leq1-\zeta z+\tfrac12\zeta^2e^{2\zeta}z^2$
	for $|z|\leq2$ makes $e^{-\zeta V_t}$ a supermartingale before exit
	or a return below $b$, for a fixed sufficiently small $\zeta>0$.

	\smallskip\noindent\emph{A large excursion rarely returns.}
	For every deterministic sufficiently late $s$, define
    \[
    \gs_s:=\inf\{t\geq s:U_t<b(t)\}\text{ and
	}
    A_s:=\{s<\tau,\ U_s\geq2b(s)\}\in\cF_s.
    \]
	To define the exponential on the whole probability space, let
	$T_s=s$ on $A_s^c$, and $T_s=\gs_s\wedge\tau$ on $A_s$.
	Since $A_s\in\cF_s$, $T_s$ is a stopping time with $T_s\geq s$.
	For $t\geq s$, define
	\begin{align*}
		Z_t^{(s)}=\ind_{A_s}
		\exp(-\zeta V_{t\wedge T_s}).
	\end{align*}
	On $\{A_s,t<T_s\}$ the pre-step gates and $U_t\geq b(t)$ hold.
	The one-step exponential estimate applies also to the transition
	attaining $T_s$, since $|\gD U_t|\leq1$ for every possible parent.
	Thus $Z^{(s)}$ is a nonnegative supermartingale. It is integrable at
	every finite horizon (indeed $V_{t\wedge T_s}\geq-b(t)$).
	For every deterministic integer $m\geq s$,
	\begin{align*}
		\E(Z_m^{(s)}\mid\cF_s)
		\leq\ind_{A_s}e^{-\zeta V_s}.
	\end{align*}
	On $A_s\cap\{\gs_s<\tau\}$ the terminal exponential is at least
	one. Conditional Fatou's lemma therefore gives
	\begin{align*}
		\ind_{A_s}\pr(\gs_s<\tau\mid\cF_s)
		\leq\ind_{A_s}e^{-\zeta V_s}.
	\end{align*}
	In particular, on $A_s$,
	\begin{align*}
		\pr(\gs_s<\tau\mid\cF_s)
		\leq e^{-\zeta(U_s-b(s))}\leq e^{-\zeta b(s)}.
	\end{align*}
	The argument requires only nonnegativity.
	In particular
	\begin{align}\label{eq:window-return}
		\sum_s\pr(A_s\cap\{\gs_s<\tau\})<\infty.
	\end{align}
	This sum runs over all deterministic integer starting times, so it
	also controls every possible random excursion time.

	\smallskip\noindent\emph{An active set cannot escape the window forever.}
	The stopped martingale differences
	\[
    \xi_{t+1}=\ind_{\{t<\tau\}}(\gD U_t-\E_t\gD U_t)
    \]
	satisfy $\E_t\xi_{t+1}^2\leq A_0t^{-\theta/2}$.
	Thus $\sum_t\xi_{t+1}/t^{\theta}$ converges in $L^2$ and almost surely,
	and $\sum_{s<t}\xi_{s+1}=o(t^{\theta})$.
	On $\{\tau=\infty,\ \gs_s=\infty\}\cap A_s$, the positive drift
	therefore gives
	$U_t\geq\tfrac12a_0\sum_{j=s}^{t-1}j^{-\theta/2}+o(t^{\theta})$ and
	$\liminf_t U_t/t^{\theta}>0$.

	The martingale convergence and return conclusions hold on one
	probability-one event for every integer $s$. Intersect these events
	also over all deterministic finite $I$, integer $n$ and rational
	$0<c<C$. This is a countable intersection.
	Only now intersect with $\cE_I$. Its degree limits imply
    \[
    U_t=o(\gl_t^2)=o(t^{\theta})
    \]
    on the probability-one scale event of
	Proposition~\ref{prop:scales}.
	If also $\tau=\infty$, every late $A_s$ must therefore have
	$\gs_s<\infty=\tau$, outside a single null event. By
	\eqref{eq:window-return} and Borel--Cantelli lemma there are only finitely
	many such $A_s$. Consequently $U_t<2b(t)$ eventually.
	Every path in $\cE_I$ and the scale event eventually satisfies
	some gates of~\eqref{eq:competition-gates}. The countable union over
	$n,c,C$ proves the assertion on $\cE_I$, simultaneously for all
	deterministic $I$. Taking $\cup_I\cE_I$ then gives the random
	active-set conclusion.
\end{proof}

To exclude ties inside this window, we use the row sums of the exact
Schur matrix. Their centered vector measures the effective imbalance
between candidates. The next identity shows that Perron selection
reinforces this imbalance. Its coefficient remains positive when some
effective couplings tend to zero.

\begin{lem}[Perron repulsion]\label{lem:repulsion}
	Let $H=\diag(y)+A$ be a symmetric $r\times r$ matrix, with $A$ nonnegative
	and zero diagonal. Let $\psi>0$ be a Perron vector of $H$. Define
	$p=\psi/\sum_i\psi_i$, and $Y=\cen(H\vone)$. Then
	\begin{align}\label{eq:sym}
		Y\cdot(p-r^{-1}\vone)
		=\frac1r\sum_{i<j}A_{ij}
		(\psi_j/\psi_i+\psi_i/\psi_j-2)
        =\frac1r\sum_{i<j}A_{ij}
		\left(\sqrt{\psi_j/\psi_i}-\sqrt{\psi_i/\psi_j}\right)^2.
	\end{align}
	On bounded sets of $A$ and $Y$, this is at least $c\norm{Y}^2$ for
	a positive constant that does not require a lower bound on nonzero $A_{ij}$.
\end{lem}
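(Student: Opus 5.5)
The plan is to verify the identity~\eqref{eq:sym} directly from the Perron equation, and then to bound $\norm{Y}^2$ by the quantity in~\eqref{eq:sym} using Cauchy--Schwarz.

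\emph{The identity.} Let $\mu$ be the Perron eigenvalue, so that $H\psi=\mu\psi$ and $Hp=\mu p$. The vector $p-r^{-1}\vone$ has zero coordinate sum, so the centering in $Y$ can be dropped:
\begin{align*}
Y\cdot(p-r^{-1}\vone)=(H\vone)\cdot p-\tfrac1r\vone\cdot H\vone .
\end{align*}
By symmetry of $H$, $(H\vone)\cdot p=\vone\cdot Hp=\mu\sum_ip_i=\mu$. The $i$th row of the eigenvector equation gives $\mu=y_i+\sum_{j}A_{ij}\psi_j/\psi_i$. Averaging over $i$,
\begin{align*}
\mu=\frac1r\sum_i y_i+\frac1r\sum_{i\ne j}A_{ij}\psi_j/\psi_i .
\end{align*}
On the other hand, $\tfrac1r\vone\cdot H\vone=\tfrac1r\sum_i y_i+\tfrac1r\sum_{i\ne j}A_{ij}$. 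Subtracting yields
\begin{align*}
\Phi:=Y\cdot(p-r^{-1}\vone)=\frac1r\sum_{i\neq j}A_{ij}\left(\frac{\psi_j}{\psi_i}-1\right).
\end{align*}
Pairing the terms $(i,j)$ and $(j,i)$ and using $A_{ij}=A_{ji}$ gives the middle expression in~\eqref{eq:sym}. The last form follows from $\rho+\rho^{-1}-2=(\sqrt\rho-1/\sqrt\rho)^2=(\rho-1)^2/\rho$. In particular $\Phi\ge0$.

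\emph{The lower bound.} Write $\rho_{ij}=\psi_j/\psi_i$. Substituting $y_i=\mu-\sum_jA_{ij}\rho_{ij}$ gives
\begin{align*}
(H\vone)_i=\mu+\sum_jA_{ij}(1-\rho_{ij}).
\end{align*}
Hence $Y$ is the centering of the vector $w_i=\sum_jA_{ij}(1-\rho_{ij})$. Since centering does not increase the Euclidean norm, $\norm Y^2\le\sum_i w_i^2$. Cauchy--Schwarz gives
\begin{align*}
w_i^2\le\Bigl(\sum_jA_{ij}\rho_{ij}\Bigr)\Bigl(\sum_jA_{ij}\frac{(1-\rho_{ij})^2}{\rho_{ij}}\Bigr).
\end{align*}
The second factor, summed over $i$, equals $2r\Phi$ by~\eqref{eq:sym}.

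It remains to bound the first factor in terms of the data. Using $\mu=\tfrac1r\vone\cdot H\vone+\Phi$, we have
\begin{align*}
\sum_jA_{ij}\rho_{ij}=\mu-y_i=\Phi-Y_i+\sum_jA_{ij}.
\end{align*}
This is at most $\Phi+B$, where $B$ depends only on the given bounds for $A$ and $Y$. Therefore
\begin{align*}
\norm Y^2\le 2r\,\Phi(\Phi+B).
\end{align*}
We now split into two cases. If $\Phi\le1$, this gives $\Phi\ge\norm Y^2/(2r(1+B))$. If $\Phi>1$, then $\Phi\ge\norm Y^2/\sup\norm Y^2$. Either way $\Phi\ge c\norm Y^2$ with $c$ depending only on $r$ and the bounds. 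No lower bound on the nonzero $A_{ij}$ enters, because the weights $A_{ij}$ appear identically on both sides of the Cauchy--Schwarz step.

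The only delicate point is controlling the factor $\sum_jA_{ij}\rho_{ij}$, since the ratios $\rho_{ij}$ are a priori unbounded. The step above handles it by re-expressing this factor through $\Phi$ itself via the eigenvector equation, rather than trying to bound the ratios directly.
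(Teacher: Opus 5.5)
Your proof is correct and follows essentially the paper's route: the identity comes from $(H\vone)\cdot p=\mu$ (the paper writes this as $b\cdot p=0$ for $b=H\vone-\ell\vone$), and the lower bound uses the same Cauchy--Schwarz split with weights $A_{ij}\psi_j/\psi_i$. The one difference is in bounding $s_i=\sum_jA_{ij}\psi_j/\psi_i$: the paper uses that the Perron eigenvalue is at most the maximal row sum, which gives $s_i\le2\norm{Y}_\infty+\norm{A\vone}_\infty$ directly, whereas your exact relation $s_i=\Phi-Y_i+(A\vone)_i$ also works but forces the case split (the same row-sum bound, giving $\Phi\le\max_iY_i$, would remove it).
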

\begin{proof}
	Let $\ell$ be the Perron eigenvalue of $H$, and define 
	$b_i=\sum_jA_{ij}(1-\psi_j/\psi_i)=(H\vone)_i -\ell$. In particular, we have 
	$Y=\cen(b)$. Symmetry gives $b\cdot p=0$ and
	$-\sum_i b_i=\cE$, the edge sum displayed in~\eqref{eq:sym}.
	This proves the identity. 
    
    Subtracting the mean of $y$ from $H$,
	bounded $A,Y$ imply bounded $y,\ell$; hence, bounded
	$s_i=\sum_jA_{ij}\psi_j/\psi_i=\ell-y_i$.
	For $a_{ij}=\psi_j/\psi_i$, write
	$b_i=\sum_j\sqrt{A_{ij}a_{ij}}\,
		\sqrt{A_{ij}}(1-a_{ij})/\sqrt{a_{ij}}$.
	Cauchy--Schwarz inequality then gives
	\begin{align*}
		b_i^2\leq s_i\sum_jA_{ij}(\psi_j/\psi_i+\psi_i/\psi_j-2).
	\end{align*}
	More explicitly, let $r_i=(H\vone)_i$, so
	$y_i=r_i-(A\vone)_i$. The Perron eigenvalue is at most
	$\max_i r_i$. Therefore
	$s_i=\ell-r_i+(A\vone)_i\leq\max_jr_j-r_i+(A\vone)_i$, which gives
	$s_i\leq2\norm{Y}_\infty+\norm{A\vone}_\infty$.
	With $B_0=1+2\norm{Y}_\infty+\norm{A\vone}_\infty$,
	\begin{align*}
		\norm{Y}^2\leq\norm{b}^2\leq2B_0\cE,\quad
		Y\cdot(p-r^{-1}\vone)\geq\frac{\norm{Y}^2}{2rB_0}.
	\end{align*}
	This gives a uniform positive coefficient on bounded sets even when
	some or all off-diagonal entries tend to zero.
\end{proof}

\begin{lem}[Bounded stopped drift]\label{lem:bounded-stopped-drift}
	Let $\tau\geq n$ be a stopping time and $(Z_t)_{t\geq n}$ an
	adapted process satisfying $0\leq Z_t\leq B$ on the whole probability
	space for a deterministic $B<\infty$. If deterministic numbers
	$c_t\geq0$ satisfy
	\begin{align*}
		\E_t\gD Z_t\geq c_t\ind_{\{t<\tau\}},\qquad
		\sum_{t=n}^{\infty}c_t=\infty,
	\end{align*}
	then $\pr(\tau=\infty)=0$.
\end{lem}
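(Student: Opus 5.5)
The plan is to use the Doob decomposition of the stopped process and compare its two halves: the drift grows without bound, while the bounded process can only sustain a finite total drift.

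Write $Z_t-Z_n=\sum_{s=n}^{t-1}\E_s\gD Z_s+M_t$, where $M_t$ is the Doob martingale. Two facts about $M$ are immediate:
\begin{itemize}
\item Since $0\le Z\le B$, each increment $\gD Z_s$ is bounded by $B$ in absolute value, so every predictable drift $\E_s\gD Z_s$ is integrable. Hence $M$ is a genuine martingale with bounded increments.
\item Taking expectations gives
\begin{align*}
\E\sum_{s=n}^{t-1}\E_s\gD Z_s=\E Z_t-\E Z_n\le B
\end{align*}
for every deterministic $t$.
\end{itemize}

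The drift may be negative after $\tau$, so we cannot conclude directly. We split each drift term into its positive and negative parts. The difficulty is that the negative parts are not controlled after $\tau$.

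To avoid this, work instead with the stopped process $Z_{t\wedge\tau}$. It is adapted, still satisfies $0\le Z_{t\wedge\tau}\le B$, and its one-step drift is
\begin{align*}
\ind_{\{t<\tau\}}\E_t\gD Z_t\ge c_t\ind_{\{t<\tau\}}\ge0.
\end{align*}
So $Z_{t\wedge\tau}$ is a bounded submartingale.

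Taking expectations of its Doob decomposition gives
\begin{align*}
\E\sum_{s=n}^{t-1}c_s\ind_{\{s<\tau\}}\le\E Z_{t\wedge\tau}-\E Z_n\le B.
\end{align*}
By monotone convergence in $t$, this yields $\E\sum_{s\ge n}c_s\ind_{\{s<\tau\}}\le B$.

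On the event $\{\tau=\infty\}$, the sum $\sum_{s\ge n}c_s\ind_{\{s<\tau\}}$ equals the deterministic divergent series $\sum_{s\ge n}c_s=\infty$. Therefore
\begin{align*}
\infty\cdot\pr(\tau=\infty)\le B,
\end{align*}
which forces $\pr(\tau=\infty)=0$.

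The only delicate point is making sure the drift inequality is used solely on $\{t<\tau\}$, where it is assumed. Stopping at $\tau$ handles this, since it makes the drift vanish identically after $\tau$ while preserving the bound $0\le Z_{t\wedge\tau}\le B$. No martingale convergence argument is needed.
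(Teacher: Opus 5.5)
Your argument is correct and is essentially the paper's: you sum the expected drifts up to a deterministic horizon, bound the total by $B$, and compare with the divergent deterministic series $\sum_t c_t$ on $\{\tau=\infty\}$. Your remark that the drift may be negative after $\tau$ misreads the hypothesis: since $c_t\ge0$, it already forces $\E_t\gD Z_t\ge0$ after $\tau$, so passing to $Z_{t\wedge\tau}$ is harmless but unnecessary here. Stopping does, however, make your version valid when the drift bound is assumed only on $\{t<\tau\}$.
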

\begin{proof}
	For every deterministic $T>n$, taking expectations and summing gives
	\begin{align*}
		B\geq\E Z_T\geq\E Z_n+
		\sum_{t=n}^{T-1}c_t\pr(t<\tau)
		\geq\E Z_n+\pr(\tau=\infty)\sum_{t=n}^{T-1}c_t.
	\end{align*}
	Letting $T\to\infty$ proves the claim. In an application where $Z$
	is frozen at $\tau$, its deterministic upper bound must include the
	value at the exit step.
\end{proof}

\subsection{Proof of Theorem~\ref{thm:hub}}
	Lemma~\ref{lem:active} supplies a finite active set. We must show that it
	has only one member. For a nonempty candidate set $I$, the complement adjacency matrix
	$A_F$ is a proper principal submatrix of the connected tree adjacency
	matrix. Hence $\rho(A_F)<\gl_t$, so the resolvent and the Schur
	matrix are defined at $x=\gl_t$ for every such candidate set.
	Use the exact Schur matrix
	$\cH_t(x)$ from Lemma~\ref{lem:schur} and its centered row-sum vector
	\begin{align*}
		Y_t=\cen(\cH_t(\gl_t)\vone).
	\end{align*}
	Under the exterior and scale bounds used below, the size estimate gives
	\begin{align}\label{eq:imbalance-size}
		Y_t=\gl_t^{-1}\cen(d_i(t):i\in I)+O(1).
	\end{align}
	Thus Lemma~\ref{lem:window} makes $Y_t$ bounded whenever $I$ is an active
	set. The Schur correction in $Y_t$ records the interactions between
	candidates through the rest of the tree. Keeping this correction is
	important because the $O(1)$ error in~\eqref{eq:imbalance-size} is on the same
	scale as the bounded imbalance itself, whereas the Perron-repulsion
	inequality applies to the exact Schur matrix.

	Perron reinforcement pushes a nonzero imbalance outward. Even at
	$Y_t=0$, candidate attachments supply positive squared increments.
	Together these give $\norm{Y_t}^2$ drift of order at least $1/t$.
	This exclusion of coexistence is related to the avoidance of unstable
	limits in reinforced systems~\cite{Pemantle1990}; here the required
	estimates are proved directly. We stop at the first failure of fixed
	deterministic bounds, control the exit value, and apply
	Lemma~\ref{lem:bounded-stopped-drift}. A countable cover then accounts
	for every possible coexistence path.

	\smallskip\noindent\textbf{Step 1. Fix the candidates and the stopping region.}
	Fix a deterministic finite $I$, $r=|I|\geq2$, rational bounds
	$0<c<C$ and $K,L>0$, and a deterministic integer
	$n\geq n_0(I,c,C,K,L)\geq\max\{t_0,\max I\}$.
	The threshold $n_0$ is chosen large enough for the estimates below.
	Write $u_0=r^{-1}\vone$ and $U_t=U_t^I$.
	The gates $\cG_t$ in~\eqref{eq:competition-gates} bound the growth
	scales, the exterior degrees, and the total candidate centrality. Add
	bounds on the degree spread and the row-sum imbalance by setting
	\begin{align*}
		\cD_t=\cG_t\cap
		\{U_t\leq K\gl_t,\ \norm{Y_t}\leq L\},\quad
		\tau=\inf\{t\geq n:\cD_t\text{ fails}\},\quad
		A_n=\cD_n\in\cF_n.
	\end{align*}
	If the initial gates fail, $\tau=n$.
	All calculations through Step 4 use these deterministic choices and
	the adapted pre-step event $\{t<\tau\}\in\cF_t$, so conditional drifts
	use the original attachment law. The future active-set event $\cE_I$
	enters only in the countable cover of Step 5.

	Before $\tau$, the derivative estimate~\eqref{eq:schur-derivative} gives
	\begin{align}\label{eq:centered-derivative}
		\frac d{dx}\cen(\cH_t(x)\vone)=O(\gl_t^{-1})
	\end{align}
	uniformly for $x\in[\gl_t/2,2\gl_t]$.
	Indeed, centering the leading diagonal term leaves the degree-spread
	bound $U_t\leq K\gl_t$ divided by $x^2$.

	\smallskip\noindent\textbf{Step 2. Control one attachment, including the exit step.}
	We need bounds that hold for every possible parent, even when the
	updated tree fails a gate. First inherit rough geometric bounds from
	the current state. Write $\gl=\gl_t$ and
	$\gl'=\gl_{t+1}$. The pendant-edge bound gives
	\begin{align}\label{eq:exit-spectral}
		\gl\leq\gl'\leq\sqrt{\gl^2+1}
		\leq\gl+1/(2\gl).
	\end{align}
	The new vertex is outside $I$, so
	\begin{align}\label{eq:exit-geometry}
		\begin{aligned}
			\max_{u\notin I}d_u(t+1) & \leq C\gl+1\leq(C+1)\gl', \\
			t+1                         \leq(c^{-3}+1)(\gl')^3, &\text{ and }
			U_{t+1}                     \leq U_t+1\leq(K+1)\gl'.
		\end{aligned}
	\end{align}
	Here $\gl\geq1$. These enlarged deterministic bounds allow
	Lemma~\ref{lem:schur} to be applied to the updated tree even if its
	$S$, candidate-mass, or imbalance bound fails. In particular,
	$\norm{A_{F,t+1}}=O(\sqrt\gl)<\gl$ and
	$[\gl,\gl']\subseteq[\gl'/2,2\gl']$ for large $t$.
	The updated Schur matrix is therefore defined at the old radius, and
	its centered derivative is $O(\gl^{-1})$ on this interval.

	Now calculate the matrix update at the fixed radius $x=\gl_t$.
	Attaching a leaf at $i\in I$ adds $e_ie_i^\top/x$ to $\cH_t(x)$.
	For an attachment at $u\in F$, eliminating the new leaf changes
	the inverse exterior resolvent from $R(x)^{-1}$ to
	$R(x)^{-1}-e_ue_u^\top/x$. The Sherman--Morrison identity therefore
	adds to $\cH_t(x)$ the matrix
	\begin{align*}
		E_u=\frac{(BR(x)e_u)(BR(x)e_u)^\top}{x-R(x)_{uu}}
		=O(\gl_t^{-3}).
	\end{align*}
	The derivative $\cH'(x)=-BR(x)^2B^\top$ is negative semidefinite.
	Hence the top eigenvalue of $\cH(x)$ is nonincreasing in $x$.
	For clarity, let $\cH_{\rm new}$ be the updated Schur matrix and define $f(x)=\lambda_{\max}(\cH_{\rm new}(x))-x$.
	Since $\cH_{\rm new}^{\prime}(x)$ is negative semidefinite,
	$f(y)\leq f(x)-(y-x)$ for $y\geq x$ in the interval under consideration.
	At the old radius, Weyl's inequality gives
	$0\leq f(\gl_t)\leq\norm{\cH_{\rm new}(\gl_t)-\cH_t(\gl_t)}$.
	Since $f(\gl_{t+1})=0$, it follows that
	\begin{align*}
		0\leq\gD\gl_t
		\leq\norm{\cH_{\rm new}(\gl_t)-\cH_t(\gl_t)}.
	\end{align*}
	Thus $\gD\gl_t\leq\gl_t^{-1}$ on an $I$ hit and
	$\gD\gl_t=O(\gl_t^{-3})$ on an exterior hit.
	This sharper exterior estimate uses the updated Schur matrix only
	after its existence has been established by the rough bounds above.
	Accounting for the change from $\gl$ to $\gl'$ with
	\eqref{eq:centered-derivative} now gives
	\begin{align}\label{eq:Y-updates}
		\gD Y_t=
		\begin{cases}
			(e_i-u_0)/\gl_t+O(\gl_t^{-2}), & i\in I, \\
			O(\gl_t^{-3}),                     & u\in F.
		\end{cases}
	\end{align}
	The estimates are uniform over all possible parents and include the
	transition that causes $\tau$.

	\smallskip\noindent\textbf{Step 3. Separate reinforcement from attachment noise.}
	Put $z_t=v_I/L_t^I$ and $P_t=\sum_{i\in I}a_t(i)\asymp\gl_t^{-1}$.
	Given an $I$ hit, the parent law is exactly
	\begin{align*}
		p_t=(1-\epsilon_t)z_t+\epsilon_tu_0,\quad
		\epsilon_t=\frac{r\gb}{tP_t}=O(\gl_t^{-2}).
	\end{align*}
	The mean of the errors in~\eqref{eq:Y-updates} is
	$O(P_t\gl_t^{-2}+\gl_t^{-3})=O(\gl_t^{-3})$.
	Consequently
	\begin{align}\label{eq:Y-mean}
		\E_t\gD Y_t
		=\frac{P_t(1-\epsilon_t)}{\gl_t}(z_t-u_0)
		+O(\gl_t^{-3}).
	\end{align}
	Lemma~\ref{lem:repulsion} applies to $\cH_t(\gl_t)$, whose off-diagonal
	part is bounded on the gates. It gives
	$Y_t\cdot(z_t-u_0)\geq a_1\norm{Y_t}^2$ with a deterministic $a_1>0$.
	This is the outward contribution from reinforcement.

	There is also a positive contribution from the squared increments.
	Every candidate hit has leading increment $(e_i-u_0)/\gl_t$, with
	$\norm{e_i-u_0}^2=1-1/r>0$. Discarding the nonnegative exterior terms,
	\begin{align}\label{eq:Y-noise}
		\E_t\norm{\gD Y_t}^2
		\geq\frac{P_t(1-1/r)}{\gl_t^2}-C_3\gl_t^{-4}
		\geq c_0\gl_t^{-3}-C_3\gl_t^{-4}
	\end{align}
	for a deterministic $c_0>0$. Candidate hits have probability of order
	$\gl_t^{-1}$ and increments of order $\gl_t^{-1}$, explaining the
	order $\gl_t^{-3}$. This is a conditional second-moment estimate,
	rather than a conditional variance estimate. At $Y_t=0$ the mean
	vector increment may cancel, but the expected squared increment
	remains positive. The factor $1-1/r$ is positive precisely when there
	is more than one candidate.

	Using the exact identity
	$\gD\norm{Y_t}^2=2Y_t\cdot\gD Y_t+\norm{\gD Y_t}^2$ gives
	\begin{align*}
		\E_t\gD\norm{Y_t}^2
		\geq\frac{a\norm{Y_t}^2}{\gl_t^2}
		+\frac{c_0}{\gl_t^3}
		-\frac{C_4\norm{Y_t}}{\gl_t^3}
		-\frac{C_4}{\gl_t^4}
	\end{align*}
	with a deterministic $a>0$. Complete the square using
	\begin{align*}
		\frac{az^2}{\gl^2}-\frac{C_4z}{\gl^3}
		\geq-\frac{C_4^2}{4a\gl^4}.
	\end{align*}
	All losses are now $O(\gl_t^{-4})$, smaller than the positive
	noise term. After increasing $n_0$ if necessary,
	\begin{align}\label{eq:Y-positive-drift}
		\E_t\gD\norm{Y_t}^2
		\geq\frac{c_0}{2\gl_t^3}\geq\frac{c_5}{t}
		\quad(t<\tau)
	\end{align}
	for a deterministic $c_5>0$.

	\smallskip\noindent\textbf{Step 4. A bounded stopped statistic cannot have this drift forever.}
	For $t\geq n$, define
	\begin{align*}
		\cZ_t=
		\begin{cases}
			\norm{Y_{t\wedge\tau}}^2, & \text{on }A_n,   \\
			0,                     & \text{on }A_n^c.
		\end{cases}
	\end{align*}
	This definition evaluates $Y$ only where the initial bounds hold and
	up to the exit step, where Step 2 guarantees that the Schur matrix is
	still defined. Before exit $\norm{Y_t}\leq L$. If $\tau>n$ is finite,
	Step 2 also gives
	\begin{align*}
		\norm{Y_\tau}
        \leq
        \norm{Y_{\tau-1}}+
		\norm{Y_\tau-Y_{\tau-1}}
		\leq L+C_6/\gl_{\tau-1}\leq L+C_6
	\end{align*}
	for a deterministic $C_6$, since $\gl_{\tau-1}\geq1$.
	After exit the statistic is frozen. Consequently
	\begin{align}\label{eq:stopped-imbalance-bound}
		0\leq\cZ_t\leq C_{\mathrm{stop}}:=(L+C_6)^2
		\quad(t\geq n)
	\end{align}
	on the whole probability space. Because the stopped statistic retains
	its value at exit, control of this overshoot is needed for the uniform
	bound, even when exit is caused by a different gate.

	Since $\{t<\tau\}\subseteq A_n$, its increment is exactly
	\begin{align*}
		\gD\cZ_t=
		\begin{cases}
			\norm{Y_{t+1}}^2-\norm{Y_t}^2, & t<\tau,    \\
			0,                       & t\geq\tau.
		\end{cases}
	\end{align*}
	The first case includes $t+1=\tau$. The exit-step estimates were proved
	for every possible parent. Thus~\eqref{eq:Y-positive-drift} and the
	$\cF_t$-measurability of $\{t<\tau\}$ give
	\begin{align}\label{eq:stopped-imbalance-drift}
		\E_t\gD\cZ_t
		\geq\frac{c_5}{t}\ind_{\{t<\tau\}}.
	\end{align}
	Apply Lemma~\ref{lem:bounded-stopped-drift} with
	$c_t=c_5/t$, using~\eqref{eq:stopped-imbalance-bound} and
	\eqref{eq:stopped-imbalance-drift}. For every fixed choice of
	candidates, numerical bounds and starting time, this gives
	\begin{align}\label{eq:coexistence-region-exit}
		\pr(\tau=\infty)=0.
	\end{align}
	The application uses finite sums of conditional expectations under
	the original law, without conditioning on permanent survival.

	\smallskip\noindent\textbf{Step 5. Cover every possible coexistence event.}
	Exiting each fixed region almost surely rules out coexistence once we
	show that every coexistence path must eventually remain in some one
	fixed region. Fix a deterministic finite $I$ with $|I|\geq2$, and let
	$\Omega_*$ be the probability-one event on which the scale estimates
	and Lemma~\ref{lem:window} hold simultaneously for every deterministic
	finite candidate set.

	On $\cE_I\cap\Omega_*$, the growth scales have eventual positive
	lower and finite upper bounds, the exterior degrees are eventually
	$O(\gl_t)$, and the candidate centrality has positive lower limit.
	The competition-window lemma gives $U_t=O(\gl_t)$, and
	\eqref{eq:imbalance-size} then gives $Y_t=O(1)$. For each such path,
	choose positive rational $c,C,K,L$, with $c<C$, so that $c$ lies below
	the eventual lower bounds and $C,K,L$ lie above the corresponding
	eventual upper bounds. There is an integer $n_1$ after which
	all gates $\cD_t$ hold. Choosing an integer
	$n\geq\max\{n_1,n_0(I,c,C,K,L)\}$ produces a stopping time that never
	fires on this path.

	More formally, let $\sP_I$ be the set of tuples
	$\xi=(n,c,C,K,L)$ with positive rational $c,C,K,L$, $c<C$, and integer
	$n\geq n_0(I,c,C,K,L)$. Write $\tau^{I,\xi}$ for the associated
	stopping time. The preceding pathwise argument proves
	\begin{align}\label{eq:coexistence-countable-cover}
		\cE_I\cap\Omega_*
		\subseteq\bigcup_{\xi\in\sP_I}
		\{\tau^{I,\xi}=\infty\}.
	\end{align}
	Although the tuple that covers a particular path may depend on that
	path, all possible tuples lie in this fixed countable family. Each
	event on the right has probability zero by
	\eqref{eq:coexistence-region-exit}. Countable subadditivity and
	$\pr(\Omega_*)=1$ therefore give $\pr(\cE_I)=0$.

	There are also only countably many finite subsets of the integer
	vertex labels. Taking a second countable union yields
	$
    \pr\left(\cup_{2\leq |I|<\infty}
		\cE_I\right)=0.
	$
	Lemma~\ref{lem:active} supplies a nonempty finite active set almost
	surely, so that set must be a singleton $I=\{h\}$.

	The scalar Schur equation now gives $d_h(t)=\gl_t^2+O(\gl_t)$,
	and Lemma~\ref{lem:active} gives $\max_{u\ne h}d_u(t)=O(\gl_t)$.
	Thus $h$ is eventually the unique degree maximizer.
	For its Perron coordinate, define $z=v_h$ and $w=v_F$.
	Then $w=R(\gl_t)B^\top z$ and
	\begin{align*}
		\norm{w}^2=\gl_t^{-2}{d_h(t)}z^2+o(1)=z^2+o(1).
	\end{align*}
	Indeed, with $z=v_h(t)$ and scalar $BB^\top=d_h(t)$,
	\begin{align*}
		\norm{w}^2=\gl_t^{-2}{z^2}
		B(\mathrm{Id}-A_F/\gl_t)^{-2}B^\top
		=\gl_t^{-2}z^2d_h(t)
		\,(1+O(\norm{A_F}/\gl_t)).
	\end{align*}
	Here $\norm{A_F}/\gl_t=O(\gl_t^{-1/2})\to0$.
	Normalization gives
	$z\to2^{-1/2}$, and the Green bound gives
	$\max_{u\in F}v_u(t)=O(\gl_t^{-1})$.
\qed


\section{Shell balances and sharp limits}\label{sec:sharp}

For each deterministic proposed root $h$, begin at $n_h=\max\{t_0,h\}$ and define its shells $L_j^h(t)=\{u\in[t]:\dist_{T_t}(u,h)=j\}$. These sets are $\cF_t$-measurable, and distances of existing vertices are preserved by subsequent leaf additions. Thus shell-count drift identities are computed under the original attachment law for a fixed root. We prove the martingale estimates with deterministic bounds and adapted stopping times, and only afterward intersect with the event that the proposed root is the persistent hub. A countable union over roots, starting times, and rational bounds then gives the conclusions below.

For brevity, write $h$ for the hub supplied by Theorem~\ref{thm:hub}, and define $H_t=d_h(t)$, $m_t=v_h(t)$, and $L_j(t)=L_j^h(t)$.

\begin{thm}[Sharp scalar limits]\label{thm:sharp}
	Almost surely, with $\kappa_t=\sum_u v_u(t)^3/S_t$,
	\begin{align*}
		\frac{H_t}{\gl_t^2}\to1,\quad
		\frac{\gl_t^3}{t}\to\frac{\ga}{\theta(1+\theta)},\quad
		\frac{S_t}{\gl_t}\to\frac{1+\theta}{\sqrt2},\quad
		\gl_t\kappa_t\to\frac1{2(1+\theta)}.
	\end{align*}
\end{thm}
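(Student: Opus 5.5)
The plan is to work on the event of Theorem~\ref{thm:hub}, with $h$ fixed as a deterministic root and the localization bounds fixed by the paper's stopped-gate convention. $H_t/\gl_t^2\to1$ is immediate from $d_h(t)=\gl_t^2+O(\gl_t)$. The other three limits reduce to one quantity: the asymptotic split of the $\ell_1$ mass $S_t$ among the shells $L_1$, $L_2$ and $\bigcup_{j\ge3}L_j$.

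\emph{Step 1: Perron coordinates by shell.} From Theorem~\ref{thm:hub}, $m_t\to2^{-1/2}$. For $u\in L_1$, the cavity recursion (Green bound of Lemma~\ref{lem:schur}) gives $v_u=m_t/\gl_t\,(1+O(d_u/\gl_t^2))$. Since $\max_{u\ne h}d_u=O(\gl_t)$, this is $v_u=m_t\gl_t^{-1}(1+O(\gl_t^{-1}))$. For $w\in L_2$ with parent $u$, $v_w=v_u/\gl_t(1+o(1))$, and deeper shells lose another factor $\gl_t^{-1}$. Hence
\[
S_t=m_t+\frac{m_t}{\gl_t}|L_1|+\frac{m_t}{\gl_t^2}|L_2|(1+o(1))+O\bigl(t\gl_t^{-3}\bigr),
\]
where the last term is $O(1)$ by Proposition~\ref{prop:scales}. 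Here $|L_1|=H_t\sim\gl_t^2$, so the $L_1$ term is $\sim\gl_t/\sqrt2$. The $\kappa_t$ sum is dominated by $m_t^3\to2^{-3/2}$, because $\sum_{u\in L_1}v_u^3=O(\gl_t^{-1})$. So $\gl_t\kappa_t\sim 2^{-3/2}\gl_t/S_t$.

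\emph{Step 2: the second shell balance.} This is the main obstacle. I want $|L_2(t)|\sim\theta\gl_t^3$, which gives $S_t/\gl_t\to(1+\theta)/\sqrt2$ and hence $\gl_t\kappa_t\to 2^{-3/2}\sqrt2/(2(1+\theta))\cdot\ldots=1/(2(1+\theta))$. Compute the drifts under the original law. First, $\E_t\gD|L_2|=\sum_{u\in L_1}a_t(u)=\ga\,|L_1|m_t/(\gl_tS_t)(1+o(1))+O(\gb\gl_t^2/t)$. Second, by \eqref{eq:cubic-increment} and Step 1, $\E_t\gD\gl_t^3=3\gl_t\ga\kappa_t(1+o(1))+O(\gl_t/t)$, since $v_{J}^2$ is negligible off $h$. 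The ratio of the drifts is $\gl_t^3/(3\gl_t\cdot m_t^2)\cdot(1+o(1))\to\gl_t^2\cdot\tfrac{2}{3}=\theta\gl_t^2$, using $|L_1|\sim\gl_t^2$ and $m_t^2\to1/2$. So $\gD|L_2|\approx\theta\gl_t^{-1}\cdot\tfrac{1}{3}\ldots$; more cleanly, the drifts match those of $\theta\gl^3$ after integrating $d|L_2|/d(\gl^3)\to\theta$, because $\gl^3$ changes by $\approx3\gl\,dm$ while $L_2$ changes by $\gl^2\,dm\cdot(\ldots)$. I will verify the constant by comparing with $H^{1/\theta}=H^{3/2}$ as in the heuristics. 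The martingale errors are handled by Lemma~\ref{lem:martingale-normalization} with $b_t=t$. Both increments are bounded, since $\gD\gl^3\le\tfrac32\sqrt{\gl^2+1}$, so the variances summed against $t^{-2}$ converge once the gates are in force. Both compensators grow linearly in $t$ because $\gl_t\kappa_t\asymp1$. A Stolz--Ces\`aro comparison of the compensators then gives $|L_2(t)|/\gl_t^3\to\theta$. The delicate points are the $o(1)$ errors: the $L_1$ degree corrections, which sum to $O(|L_1|\gl_t^{-1})$ relative terms, and the uniform term $\gb\gl_t^2/t=O(t^{-1/3})$. Both must be uniformly small on the stopped region.

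\emph{Step 3: the cube growth.} With $S_t\sim(1+\theta)\gl_t/\sqrt2$, Step 1 gives $\kappa_t\sim1/(2(1+\theta)\gl_t)$. Hence $\E_t\gD\gl_t^3=3\ga/(2(1+\theta))+o(1)=\ga/(\theta(1+\theta))+o(1)$, using $3/2=1/\theta$. The same martingale normalization with $b_t=t$ then gives $\gl_t^3/t\to\ga/(\theta(1+\theta))$. Finally, take the countable union over deterministic roots, starting times and rational gates, and intersect with the hub event.
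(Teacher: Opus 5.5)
Your architecture matches the paper's: cavity estimates reduce everything to $B_t=|L_2(t)|$, and comparing the drift of $B_t$ with the drift of a cubic quantity, with $o(t)$ martingale errors, gives $B_t\sim\theta\gl_t^3$. The gap is in the drift of $\gl_t^3$, which both your Step~2 and Step~3 rely on. You claim $\E_t\gD\gl_t^3=3\ga\gl_t\kappa_t(1+o(1))+O(\gl_t/t)$ from \eqref{eq:cubic-increment}. That inequality bounds the increment by $3\gl_tv_{J_{t+1}}(t)^2$ only from below. Its upper bound $\tfrac32\sqrt{\gl_t^2+1}$ holds for every parent and is of order $\gl_t$. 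Non-hub attachments have probability $1-O(\gl_t^{-1})$, so you only get $\E_t\gD\gl_t^3=O(\gl_t)$, whereas the true drift is $O(1)$. With only the lower half, Step~2 yields just $\limsup_tB_t/\gl_t^3\le\theta$, and Step~3 yields just $\liminf_t\gl_t^3/t\ge\ga/(\theta(1+\theta))$. To close this you would need that a non-hub attachment moves $\gl_t$ by $O(\gl_t^{-3})$. That is true: it is the Sherman--Morrison update in Step~2 of the proof of Theorem~\ref{thm:hub}, applied with $I=\{h\}$. But it is not contained in the lemma you cite.

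The paper avoids this by using $H_t^{1/\theta}=H_t^{3/2}$ in place of $\gl_t^3$. This quantity changes only at hub hits, so $\gD(H_t^{3/2})=(\tfrac32\sqrt{H_t}+O(H_t^{-1/2}))\gD H_t$, with the exact drift $\E_t\gD H_t=\ga m_t/S_t+\gb/t$. Moreover $H_t^{3/2}=\gl_t^3+O(\gl_t^2)=\gl_t^3+o(t)$ by Theorem~\ref{thm:hub}. Put $X_t=S_t/(m_t\gl_t)$. The exact identity $\sum_{u\in L_1}v_u=\gl_tm_t$ gives $\E_t\gD B_t=\ga/X_t+\gb H_t/t$ and $\E_t\gD(H_t^{3/2})=\ga/(\theta X_t)+o(1)$. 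Hence $H_t^{3/2}-B_t/\theta=o(t)$, so $X_t=1+B_t/H_t^{3/2}+o(1)\to1+\theta$. Averaging the hub drift then gives $\gl_t^3/t\to\ga/(\theta(1+\theta))$ directly, with no separate Step~3.

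Two smaller points. First, your displayed ratio computation is garbled: the two drifts have ratio $(\ga\gl_tm_t/S_t)/(3\ga\gl_tm_t^3/S_t)=1/(3m_t^2)\to\theta$, not $\theta\gl_t^2$. Second, $\gD\gl_t^3$ is not bounded. Its conditional second moment is $O(t^{2/3})$, which is still summable against $t^{-2}$, and that is all Lemma~\ref{lem:martingale-normalization} needs.
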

\begin{proof}
	Root the tree at $h$. Since $\max_{u\ne h}d_u(t)\le K\gl_t$
	eventually, leaf induction in the cavity recursion gives, uniformly over nonroot vertices,
	\begin{align}\label{eq:cavity}
		\gl_t^{-1}\le
		r_x:=\frac{v_x}{v_{p(x)}}
		=\left(\gl_t-\sum_{y:p(y)=x}r_y\right)^{-1}
		\le(\gl_t-2K)^{-1}
		=\gl_t^{-1}\,(1+O(\gl_t^{-1})).
	\end{align}
	Indeed, first induct with $r_y\le2/\gl_t$. Its denominator is
	then at least $\gl_t-2K$, which also proves the sharper estimate.
	Thus $v_x=m_t\gl_t^{-j}\,(1+O(\gl_t^{-1}))$ uniformly in
	$L_j$, $j=1,2$, and $v_x\le8m_t/\gl_t^3$ at distance at least
	three. The hub equation gives $H_t/\gl_t^2\to1$.
	The squared mass outside $L_0\cup L_1$ is $O(t/\gl_t^4)=o(1)$ by Proposition~\ref{prop:scales}, so normalization also gives $m_t\to1/\sqrt2$.

	Put $B_t=|L_2(t)|$ and $X_t=S_t/(m_t\gl_t)$. The exact equation
	$\sum_{u\in L_1}v_u=\gl_tm_t$ and the bound
	$\sum_{j\ge3}\sum_{u\in L_j}v_u=O(t/\gl_t^3)=O(1)$ give
	\begin{align}\label{eq:shellbalance}
		X_t=1+B_t/\gl_t^3+o(1)
		=1+B_t/H_t^{1/\theta}+o(1).
	\end{align}
	Moreover $X_t$ is bounded above and below by positive constants.
	For each fixed proposed root, the following are exact conditional drifts
	from time $n_h$ onward, before imposing any eventual hub event:
	\begin{align*}
		\E_t\gD H_t=\frac{\ga}{\gl_tX_t}+\frac\gb t,
		\quad
		\E_t\gD B_t=\frac\ga{X_t}+\frac{\gb H_t}t.
	\end{align*}
	Since a hub hit is the only event that increments $H_t$, Taylor's
	theorem and $1/\theta=3/2$ give
	\begin{align*}
		\gD(H_t^{1/\theta})
		=\left(\frac{1}\theta \sqrt{H_t}+O(H_t^{-1/2})\right)\gD H_t.
	\end{align*}
	Using $H_t\asymp t^{\theta}$ and $H_t/\gl_t^2\to1$, we obtain
	\begin{align*}
		\E_t\gD(H_t^{1/\theta})=\frac{\ga}{\theta X_t}+o(1),\quad
		\E_t\gD B_t=\frac\ga{X_t}+o(1).
	\end{align*}
	Both martingale errors are $o(t)$ almost surely. The conditional second
	moment for the first process is $O(H_t\E_t\gD H_t)=O(t^{\theta/2})$,
	and that for the second is bounded. On a fixed localization region,
	the normalized variance series are bounded by
	$C\sum_t t^{-5/3}$ and $C\sum_t t^{-2}$, respectively.
	Martingale convergence and Kronecker's lemma therefore apply. These estimates are applied
	with the stopping convention above when their constants are random.
	Subtracting $1/\theta$ times the second displayed drift from the
	first gives $o(1)$. Summing the corresponding compensators and
	martingale errors gives
	$H_t^{1/\theta}-B_t/\theta=o(t)$. Since
	$H_t^{1/\theta}\asymp t$, division by $H_t^{1/\theta}$ yields
	$B_t/H_t^{1/\theta}\to\theta$. Equation~\eqref{eq:shellbalance}
	then gives $X_t\to1+\theta$. Averaging the drift formulas
	now yields
	\begin{align*}
		B_t/t\to q:=\ga/(1+\theta),\quad
		H_t^{1/\theta}/t\to\ga/(\theta(1+\theta)).
	\end{align*}
	This proves the first three limits. Finally
	$\sum_{u\ne h}v_u^3\le\max_{u\ne h}v_u\sum_{u\ne h}v_u^2
		=O(\gl_t^{-1})$ by~\eqref{eq:cavity}. The hub term and the limit of
	$S_t/\gl_t$ give $\gl_t\kappa_t\to1/(2(1+\theta))$.
\end{proof}

\begin{rem}[Constants and centrality scales]\label{rem:constants}
Here $\theta=2/3$, $q=\ga/(1+\theta)$, and $b_2=\gb+\theta$. The balances in the preceding proof give the following limits.
\begin{center}
\begin{tabular}{ll@{\hspace{15ex}}ll}
\toprule
Quantity & Limit & Quantity & Limit \\
\midrule
$\gl_t^3/t$ & $\gc^3=\ga/(\theta(1+\theta))$ &
$|L_2(t)|/t$ & $q=\ga/(1+\theta)$ \\
$S_t/\gl_t$ & $(1+\theta)/\sqrt2$ &
$\gl_t\kappa_t$ & $1/(2(1+\theta))$ \\
$S_t^{-1}\sum_{L_1(t)}v_u(t)$ & $1/(1+\theta)$ &
$S_t^{-1}\sum_{L_2(t)}v_u(t)$ & $\theta/(1+\theta)$ \\
\end{tabular}
\end{center}
The second shell carries a positive fraction of the $\ell_1$ centrality but vanishing squared Perron mass. In each fixed degree class the first-shell centrality vanishes (Theorem~\ref{thm:profile}), so $\sum_{k\geq1}\lim_tA_k(t)=\theta/(1+\theta)<1$. The remaining centrality lies on degrees escaping to infinity.

For a second-shell vertex, the centrality part of the attachment probability is $\ga/(X_t\gl_t^3)\sim\theta/t$. Adding $\gb/t$ gives the parameter $b_2$. A fixed hub neighbor instead has attachment probability asymptotic to $\ga/((1+\theta)\gc^2t^\theta)$. Summation yields the coefficient
\begin{align*}
 \frac{\ga}{(1+\theta)\gc^2(1-\theta)}
 =\frac{\theta}{1-\theta}\gc=2\gc,
\end{align*}
which underlies Theorem~\ref{thm:second} and Corollary~\ref{cor:hierarchy}.
The inverse participation ratio satisfies $\sum_uv_u(t)^4\to1/4$, since the hub term tends to $1/4$ and
\begin{align*}
 \sum_{u\ne h}v_u(t)^4\leq
 \left(\max_{u\ne h}v_u(t)^2\right)\sum_{u\ne h}v_u(t)^2=o(1).
\end{align*}
This is a standard measure of eigenvector localization~\cite{MartinZhangNewman2014}.
\end{rem}

\begin{cor}[Distance from the persistent hub]\label{cor:distance}
	Almost surely, the empirical distance distribution
	$\mu_t(j)=|L_j(t)|/t$, $j\geq0$, converges in total variation to
	the probability distribution $\rho$ given by
	\begin{align*}
		\rho_0=\rho_1=0,\qquad
		\rho_2=q,\qquad \rho_3=qb_2,\qquad
		\rho_j=qb_2\gb^{j-3}\text{ for } j\geq4,
	\end{align*}
	where $q=\ga/(1+\theta)$ and $b_2=\gb+\theta$.
	If $U_t$ is sampled uniformly from $[t]$, independently of the tree
	growth, then the conditional law of $\dist_{T_t}(U_t,h)$ given
	$(T_t,h)$ has this limit almost surely, and its unconditional law
	has the same limit in total variation. When $\ga=1$, the limiting
	masses are $3/5$ at distance two and $2/5$ at distance three.
\end{cor}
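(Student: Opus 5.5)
Rooting the tree at the persistent hub $h$ of Theorem~\ref{thm:hub}, I would treat the shell counts $|L_j(t)|$ one at a time, inductively in $j$, using linear stochastic-approximation recursions. The first two shells are already handled. $|L_0(t)|=1$. $|L_1(t)|=H_t=O(t^{\theta})=o(t)$. Theorem~\ref{thm:sharp} and Remark~\ref{rem:constants} give $|L_2(t)|/t\to q$. Hence $\mu_t(0),\mu_t(1)\to0$ and $\mu_t(2)\to q=\rho_2$.

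For $j\ge3$, a new vertex enters $L_j$ exactly when its parent lies in $L_{j-1}$, so with $B^{(j)}_t=|L_j(t)|$ the conditional drift is
\[
\E_t\gD B^{(j)}_t=\ga S_t^{-1}\sum_{u\in L_{j-1}(t)}v_u(t)+\gb B^{(j-1)}_t/t .
\]
For $j=3$, the proof of Theorem~\ref{thm:sharp} shows that the centrality term equals $\ga\theta/(1+\theta)+o(1)$; this is the $\ell_1$ share of $L_2$ from Remark~\ref{rem:constants}. The uniform term is $\gb q+o(1)$. So the drift tends to $q\theta+q\gb=qb_2$, because $\ga\theta/(1+\theta)=q\theta$.

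For $j\ge4$, the cavity bound~\eqref{eq:cavity} gives $v_u\le8m_t/\gl_t^3$ at distance at least three. Hence the centrality term is $O(t/\gl_t^4)=o(1)$, since $\gl_t\asymp t^{1/3}$. The drift is therefore $\gb B^{(j-1)}_t/t+o(1)\to\gb\rho_{j-1}$ by induction. Increments are bounded by one, so Lemma~\ref{lem:martingale-normalization} with $b_t=t$ makes the martingale parts $o(t)$. Summing the drifts by Ces\`aro then gives $B^{(j)}_t/t\to\rho_j$ for every fixed $j$. As in the paper's convention, all of this is done for a fixed deterministic proposed root with stopping at failure of the localization bounds, followed by a countable union over roots.

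Convergence in total variation follows by Scheff\'e's lemma. Each $\mu_t$ is a probability vector and $\rho$ sums to one:
\[
q+qb_2\sum_{k\ge0}\gb^k=q\bigl(1+b_2/\ga\bigr)=q(\ga+\gb+\theta)/\ga=1 .
\]
Pointwise convergence of probability vectors to a probability vector gives total-variation convergence. The conditional law of $\dist(U_t,h)$ given $(T_t,h)$ is exactly $\mu_t$, so that statement is immediate. The unconditional law is $\E\mu_t$, and it converges by bounded convergence, since the total-variation distance is bounded by one. For $\ga=1$ we have $\gb=0$, $q=3/5$, and $b_2=2/3$, so $\rho_3=2/5$ and $\rho_j=0$ for $j\ge4$.

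The main obstacle is the $\ell_1$ share of $L_2$ used in the $j=3$ drift. I would extract it from the cavity estimate $v_x=m_t\gl_t^{-2}(1+O(\gl_t^{-1}))$ on $L_2$. This gives $S_t^{-1}\sum_{L_2}v_u=B_t m_t\gl_t^{-2}/S_t+o(1)$. Then $B_t/\gl_t^3\to\theta$ and $X_t\to1+\theta$ give $\theta/(1+\theta)$. A second point needs care: the $o(1)$ errors in the drifts hold only eventually, on the hub event. The same stopped-process device as in Theorem~\ref{thm:sharp} handles this.
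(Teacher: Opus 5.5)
Your proposal is correct and takes essentially the same route as the paper. Both use fixed-root shell-count drift identities with bounded martingale errors, the $\ell_1$ share $\theta/(1+\theta)$ of $L_2$ for the third shell, vanishing centrality beyond the second shell plus induction for $j\geq4$, and truncation with bounded convergence for the total-variation and unconditional statements. One simplification: the increments are bounded indicators, so the martingale parts are already $o(t)$ under the original law for every fixed root without stopping. The hub-event drift asymptotics can then be applied purely pathwise, as the paper does.
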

\begin{proof}
	We first fix a deterministic proposed root $h$ and start at
	$n_h=\max\{t_0,h\}$. For $t\geq n_h$, write
	\begin{align*}
		L_j^h(t)&=\{u\in[t]:\dist_{T_t}(u,h)=j\},\qquad
		C_j(t)=|L_j^h(t)|,\qquad
		V_j(t)=\frac1{S_t}\sum_{u\in L_j^h(t)}v_u(t).
	\end{align*}
	Distances between existing vertices are preserved by later leaf
	additions. Consequently, for every $j\geq1$,
	\begin{align*}
		\gD C_j(t)&=\ind_{\{J_{t+1}\in L_{j-1}^h(t)\}},\text{ and }
		\E_t\gD C_j(t)=\ga V_{j-1}(t)+\gb C_{j-1}(t)/t.
	\end{align*}
	The centered increments are bounded martingale differences. Their
	partial sums are therefore $o(t)$ almost surely, simultaneously for
	all deterministic labels $h$ and all fixed $j$, by a countable
	intersection. These statements hold under the original attachment
	law, before intersecting with the event that $h$ is the persistent hub.

	On that event, Theorem~\ref{thm:sharp} gives
	$C_0(t)=1$, $C_1(t)=H_t=o(t)$, and $C_2(t)/t\to q$.
	The shell centrality limits in Remark~\ref{rem:constants} give
	\begin{align*}
		\E_t\gD C_3(t)
		\to \frac{\ga\theta}{1+\theta}+\gb q=qb_2.
	\end{align*}
	Ces\`aro averaging and the martingale bound yield
	$C_3(t)/t\to qb_2$. For $j\geq4$, the bound on the total centrality
	beyond the second shell in the proof of Theorem~\ref{thm:sharp}
	gives $V_{j-1}(t)\to0$. Induction in the same drift identity now
	gives $C_j(t)/t\to\gb\rho_{j-1}$. This proves all coordinate limits.

	If $0<\gb<1$, their sum is
	\begin{align*}
		\sum_{j\geq0}\rho_j
		=q+\frac{qb_2}{1-\gb}
		=\frac{\ga+\gb+\theta}{1+\theta}=1.
	\end{align*}
	When $\gb=0$, the recurrence instead gives directly
	$\rho_2=3/5$, $\rho_3=2/5$, and $\rho_j=0$ for $j\geq4$;
	again the masses sum to one. Coordinate convergence to a probability
	distribution on a countable set implies total-variation convergence:
	truncate at a fixed distance, pass to the limit on the finitely many
	coordinates, and then let the truncation increase.
	Conditional on $(T_t,h)$, uniform sampling has law $\mu_t$.
	For $t<h$, set the sampled distance to zero and $\mu_t=\delta_0$.
	The unconditional total-variation distance is at most
	$\E\abs{\mu_t-\rho}_{\mathrm{TV}}$, which tends to zero by bounded
	convergence. Since $h$ has a finite label almost surely, this convention
	does not affect any limiting assertion.
\end{proof}

We record two elementary tools used to pass from shell balances to
degree counts. If $X_s\in\{0,1\}$ has conditional mean $p_s$, set
$N_{r,t}=\sum_{s=r}^{t-1}X_s$ and
$\gL_{r,t}=\sum_{s=r}^{t-1}p_s$. For $\zeta>0$,
\begin{align}\label{eq:counting}
	\E e^{\zeta N_{r,t}-(e^\zeta-1)\gL_{r,t}}\le1,
	\quad
	\E e^{-\zeta N_{r,t}+(1-e^{-\zeta})\gL_{r,t}}\le1.
\end{align}
These follow by multiplying the one-step conditional exponential
bounds. They remain valid after stopping and bound joint events
involving the realized compensator.

\begin{lem}[Linear count recursion]\label{lem:recursion}
	If $a\ge0$ is fixed,
	$Y_{t+1}=(1-a/t)Y_t+f_t+\xi_{t+1}$, $f_t\to f$, and $\xi_{t+1}$
	are bounded martingale differences, then $Y_t/t\to f/(1+a)$ almost surely.
\end{lem}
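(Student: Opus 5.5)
The plan is to normalize by an explicit product and reduce to Kronecker's lemma together with a Ces\`aro average. Define
\begin{align*}
\Pi_t=\prod_{s=s_0}^{t-1}(1-a/s)^{-1},
\end{align*}
where $s_0>a$ is chosen so that every factor is positive. Finitely many initial steps only change $Y_{s_0}$ and do not affect the limit. The standard estimate $\log\Pi_t=a\log t+c+o(1)$ gives $\Pi_t\sim c' t^{a}$ with $c'>0$.

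Put $W_t=\Pi_tY_t$. The recursion becomes
\begin{align*}
W_{t+1}-W_t=\Pi_{t+1}(f_t+\xi_{t+1}),
\end{align*}
since $\Pi_{t+1}(1-a/t)=\Pi_t$. Summing from $s_0$ to $t-1$ splits $W_t$ into three parts:
\begin{align*}
W_t=W_{s_0}+\sum_{s<t}\Pi_{s+1}f_s+\sum_{s<t}\Pi_{s+1}\xi_{s+1}.
\end{align*}

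For the deterministic part, note that $\sum_{s<t}\Pi_{s+1}\sim c'\,t^{1+a}/(1+a)$. Since $f_s\to f$, the Stolz--Ces\`aro theorem (weighted Ces\`aro averaging with positive weights of divergent sum) gives
\begin{align*}
\sum_{s<t}\Pi_{s+1}f_s=\frac{f}{1+a}\,c'\,t^{1+a}+o(t^{1+a}).
\end{align*}
The initial term $W_{s_0}$ is $O(1)=o(t^{1+a})$.

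For the martingale part, apply Lemma~\ref{lem:martingale-normalization} to the differences $\Pi_{s+1}\xi_{s+1}$ with $b_t=t^{1+a}$. Their second moments are $O(s^{2a})$ because the $\xi$ are bounded. The normalized variance series $\sum_s s^{2a}/s^{2+2a}=\sum_s s^{-2}$ converges, so the martingale sum is $o(t^{1+a})$ almost surely.

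Dividing $W_t$ by $\Pi_t\,t\sim c' t^{1+a}$ yields $Y_t/t\to f/(1+a)$. The case $a=0$ is the same argument with $\Pi_t\equiv1$.

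The only delicate point is the asymptotic $\Pi_t\asymp t^a$ with an exact power. This follows from $-\log(1-a/s)=a/s+O(s^{-2})$, which gives a convergent correction and hence $\Pi_t/t^a\to c'$. This is routine, so I expect no real obstacle. If $f_t$ is random (adapted), the same argument applies pathwise, since the Ces\`aro step is deterministic given the path.
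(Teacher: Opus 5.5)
Your proof is correct. It uses the same variation-of-constants decomposition as the paper, and differs only in how the two resulting sums are controlled.

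With the paper's weights $P_{s+1,t}=\prod_{r=s+1}^{t-1}(1-a/r)$ one has exactly $P_{s+1,t}=\Pi_{s+1}/\Pi_t$. Dividing your expression for $W_t$ by $\Pi_t$ therefore reproduces the paper's iterated formula $Y_t=P_{n,t}Y_n+\sum_{s=n}^{t-1}P_{s+1,t}(f_s+\xi_{s+1})$.

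The paper never needs the asymptotic $\Pi_t\sim c't^a$.
\begin{itemize}
\item For the constant part of the forcing, it uses the exact identity $\sum_{s=n}^{t-1}P_{s+1,t}=(t-nP_{n,t})/(1+a)$, which follows from the recursion $S_{t+1}=1+(1-a/t)S_t$ for this sum.
\item For the noise, it applies summation by parts against the unweighted partial sums $M_s$. Because the weights lie in $[0,1]$ and increase with $s$, this gives the bound $2\max_{n\le s\le t}|M_s|=o(t)$.
\end{itemize}

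You instead use the integrating factor with its power-law asymptotics. You then apply Stolz--Ces\`aro to the forcing, and Lemma~\ref{lem:martingale-normalization} to the deterministically weighted martingale $\sum\Pi_{s+1}\xi_{s+1}$ with $b_t=t^{1+a}$.

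Your route is more mechanical and reuses the paper's normalization lemma directly. Its cost is the routine product asymptotics and a second-moment bound; boundedness of $\xi$ supplies the latter. The paper's summation-by-parts argument needs only the pathwise fact $M_s=o(s)$. It would therefore still apply if that fact were known by other means, without square-integrable increments. Both arguments handle random adapted $f_t$ pathwise, as you note.
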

\begin{proof}
	For $n>a$, write $P_{s,t}=\prod_{r=s}^{t-1}(1-a/r)$.
	Iteration expresses $Y_t$ as $P_{n,t}Y_n$ plus
	$\sum_{s=n}^{t-1}P_{s+1,t}(f_s+\xi_{s+1})$.
	These weights lie in $[0,1]$ and increase with $s$. Since the partial
	martingale sums $M_s$ are $o(s)$, summation by parts bounds their
	weighted sum by $2\max_{n\le s\le t}|M_s|=o(t)$.
	The identity $\sum_{s=n}^{t-1}P_{s+1,t}=(t-nP_{n,t})/(1+a)$
	handles constant forcing, and splitting $f_s-f$ at a fixed late index
	bounds its weighted sum by $o(t)$.
\end{proof}

\begin{thm}[Centrality profile]\label{thm:profile}
	For $A_k(t)=\sum_{u:d_u(t)=k}v_u(t)/S_t$ and every fixed $k\ge1$,
	almost surely
	\begin{align*}
		A_k(t)\to\frac{\theta}{1+\theta}\geom_{b_2}(k),\quad b_2=\gb+\theta.
	\end{align*}
\end{thm}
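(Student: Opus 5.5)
The plan is to reduce $A_k(t)$ to degree counts inside the second shell $L_2(t)$. From the proof of Theorem~\ref{thm:sharp} and Remark~\ref{rem:constants}:
\begin{itemize}
\item the hub carries $\ell_1$ fraction $m_t/S_t\to0$;
\item the first shell carries fraction $1/(1+\theta)$, but each hub neighbor has $v_u\approx m_t/\gl_t$;
\item shells at distance $\ge3$ carry total centrality $O(t/\gl_t^3)=O(1)=o(S_t)$;
\item every $x\in L_2(t)$ has $v_x=m_t\gl_t^{-2}(1+O(\gl_t^{-1}))$ uniformly, by~\eqref{eq:cavity}.
\end{itemize}

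\textbf{Step 1 (first shell).} I would first show that the number of hub neighbors of any fixed degree $k$ is $o(\gl_t^2)$. Then their centrality is $o(\gl_t^2)\cdot O(\gl_t^{-1})=o(S_t)$. A neighbor born at time $s$ has attachment rate of order $\ga/((1+\theta)\gl_r^2)\asymp r^{-\theta}$ at times $r\ge s$. Hence it gains of order $t^{1/3}-s^{1/3}$ children by time $t$, and it stays at degree $k$ only with probability $\exp(-c(t^{1/3}-s^{1/3}))$, up to a polynomial factor. Neighbors born in $[t-\delta t,t]$ number about $\delta H_t$, since the hub birth rate is roughly constant in $\log t$ scaled time. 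I would bound the rest with the exponential counting inequality~\eqref{eq:counting}, applied to the realized compensator on a stopped localization region, plus Borel--Cantelli. This gives the count $\le \delta' H_t$ for any $\delta'>0$ eventually.

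\textbf{Step 2 (reduction).} Combining Step~1 with the shell estimates gives
\[
A_k(t)=\frac{m_t}{S_t\gl_t^2}\,N^{(2)}_k(t)+o(1),
\]
where $N^{(2)}_k(t)=|\{x\in L_2(t):d_x(t)=k\}|$. Since $m_t/(S_t\gl_t^2)\sim\sqrt2\cdot 2^{-1/2}/((1+\theta)\gl_t^3)=1/((1+\theta)\gl_t^3)$ and $\gl_t^3\sim t/(\theta q^{-1}\cdots)$, i.e.\ $\gl_t^3/t\to\ga/(\theta(1+\theta))$, it suffices to prove
\[
\frac{N^{(2)}_k(t)}{t}\to q\,\geom_{b_2}(k).
\]
Indeed, then $A_k\to q\geom_{b_2}(k)\cdot\theta(1+\theta)/(\ga(1+\theta))=\theta\geom_{b_2}(k)/(1+\theta)$.

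\textbf{Step 3 (degree recursion).} A vertex $x\in L_2$ is selected with probability
\[
\ga v_x/S_t+\gb/t=(\theta+\gb+o(1))/t=(b_2+o(1))/t,
\]
uniformly, using $\ga m_t/(S_t\gl_t^2)\sim\theta/t$. New second-shell vertices are born at conditional rate $\E_t\gD B_t\to q$ by Theorem~\ref{thm:sharp}, each with degree $1$. This gives the recursions
\[
N^{(2)}_1(t+1)=\Bigl(1-\tfrac{b_2+o(1)}{t}\Bigr)N^{(2)}_1(t)+q+o(1)+\xi_{t+1},
\]
\[
N^{(2)}_k(t+1)=\Bigl(1-\tfrac{b_2+o(1)}{t}\Bigr)N^{(2)}_k(t)+\tfrac{b_2+o(1)}{t}N^{(2)}_{k-1}(t)+\xi_{t+1}.
\]
Here $\xi$ has bounded increments, since at most one vertex changes degree per step. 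The $o(1)/t$ perturbation of the coefficient multiplies $N_k\le t$, so it is absorbed into the forcing $f_t$. Lemma~\ref{lem:recursion} with $a=b_2$ gives $N^{(2)}_1/t\to q/(1+b_2)$. Induction gives $N^{(2)}_k/t\to q\,b_2^{k-1}/(1+b_2)^k=q\geom_{b_2}(k)$.

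\textbf{Where it could break.} All statements are made for a fixed proposed root with stopped bounds and then covered by the countable union, as in the paper's conventions. I expect the main obstacle to be Step~1: ruling out that a positive fraction of the $\ell_1$ mass of $L_1$ sits at fixed degree. The estimate must be uniform over birth times. The constant-order rate $\gl_r^{-2}$ integrates to $t^{1/3}$ only for early births, so I would use the explicit split at $(1-\delta)t$.
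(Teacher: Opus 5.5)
Your proposal is correct and follows the paper's proof essentially step for step. The paper likewise splits at $\lfloor(1-\eps)t\rfloor$ and uses~\eqref{eq:counting} with a union bound over at most $t$ vertices and Borel--Cantelli to show that degree-$k$ hub neighbors are eventually all late-born, hence $o(\gl_t^2)$ in number. It then reduces $A_k(t)$ to second-shell degree counts, with uniform selection rate $(b_2+o(1))/t$ and birth rate $q$, and solves the resulting recursions by Lemma~\ref{lem:recursion} and induction. One immaterial correction: the late-born neighbor count is $(1-(1-\eps)^{\theta}+o(1))H_t\approx\theta\eps H_t$, not $\eps H_t$.
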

\begin{proof}
	Uniformly on the first shell,
	$a_t(u)\sim\ga/[(1+\theta)\gl_t^2]\asymp t^{-\theta}$.
	Fix $\eps\in(0,1)$, set $r=\lfloor(1-\eps)t\rfloor$, and
	consider a first-shell vertex born by $r$. Its compensator on $[r,t]$
	is at least $c_\eps t^{\theta/2}$ eventually, for a deterministic
	$c_\eps>0$. By~\eqref{eq:counting},
	\begin{align*}
		\pr\{N_{r,t}\le k,\ \gL_{r,t}\ge c_\eps t^{\theta/2}\}
		\le e^{k-(1-e^{-1})c_\eps t^{\theta/2}}.
	\end{align*}
	The union bound over the at most $t$ vertices is summable. Hence every
	degree-$k$ first-shell vertex is eventually born after $r$. If $C_k(t)$
	counts them, the sharp hub growth gives
	$\limsup C_k(t)/\gl_t^2\le1-(1-\eps)^{\theta}$.
	Letting rational $\eps\downarrow0$ yields
	$C_k(t)=o(\gl_t^2)$. Since $v_u/S_t=O(\gl_t^{-2})$ there,
	their degree-$k$ mass vanishes. For a fixed proposed hub $h$ and vertex $u$, use the selection
	indicators of $u$, multiplied by the $\cF_r$-measurable indicator
	that $u\in L_1(r)$. Thus the counting estimate applies to the joint
	event involving the realized compensator without conditioning on
	eventual hub success. Intersecting afterward with the event that $h$
	is the persistent hub proves the assertion, simultaneously for all
	possible fixed labels.

	Let $B_k(t)$ count degree-$k$ vertices in $L_2(t)$, so that
	$B_t=\sum_{k\geq1}B_k(t)$, and set $B_0(t)=0$.
	The scalar limits and~\eqref{eq:cavity} give uniformly on this shell
	\begin{align*}
		a_t(u)=\frac\gb t+
		\frac{\ga m_t}{\gl_t^2S_t}
		(1+O(\gl_t^{-1}))
		=\frac{b_2+o(1)}t.
	\end{align*}
	Births into this shell have probability tending to $q=\ga/(1+\theta)$.
	Thus
	$\E_t\gD B_k=q\ind_{\{k=1\}}+b_2(B_{k-1}-B_k)/t+o(1)$, with bounded martingale errors.
	For $k=1$, Lemma~\ref{lem:recursion} gives $B_1(t)/t\to q/(1+b_2)$.
	If $B_{k-1}(t)/t\to\rho_{k-1}$, the same lemma gives
	$B_k(t)/t\to b_2\rho_{k-1}/(1+b_2)$. Induction therefore yields
	$B_k(t)/t\to q\geom_{b_2}(k)$. The hub degree diverges, the first-shell
	contribution vanishes, and deeper shells have total normalized mass
	$O(1/S_t)=o(1)$. Therefore
	\begin{align*}
		A_k(t)=\frac{m_t}{\gl_t^2S_t}(1+o(1))\,B_k(t)+o(1)
		\to\frac{q\theta}{\ga}\geom_{b_2}(k)=\frac{\theta}{1+\theta}\geom_{b_2}(k).
	\end{align*}
	This completes the proof.
\end{proof}

\begin{thm}[Uniform-vertex degree law]\label{thm:degree}
	Almost surely, for every $k\ge1$,
	\begin{align*}
		\frac{N_k(t)}t\to
		\pi_k=q\geom_{b_2}(k)+(1-q)\geom_\gb(k),\quad
		q=\frac{\ga}{1+\theta},\quad b_2=\gb+\theta.
	\end{align*}
	The conditional law of the degree of an independent uniform vertex
	converges almost surely in total variation to $\pi$, and its
	unconditional law has the same limit. The limiting mean is $2-q$.
\end{thm}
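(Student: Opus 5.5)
We decompose $N_k(t)$ by shells around the persistent hub $h$ of Theorem~\ref{thm:hub}. For a fixed proposed root and fixed $k$ write
\[
N_k(t)=\ind_{\{H_t=k\}}+C_k(t)+B_k(t)+D_k(t),
\]
where $C_k$, $B_k$ and $D_k$ count degree-$k$ vertices in $L_1$, $L_2$ and $\bigcup_{j\ge3}L_j$. The hub term vanishes eventually. The proof of Theorem~\ref{thm:profile} already gives $C_k(t)=o(\gl_t^2)=o(t)$ and $B_k(t)/t\to q\geom_{b_2}(k)$. So the work lies in the deep shells. There we will show $D_k(t)/t\to(1-q)\geom_\gb(k)$, where $1-q=(\gb+\theta)/(1+\theta)$ is the limiting fraction of vertices at distance at least three, by Corollary~\ref{cor:distance}. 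As in earlier sections, all martingale estimates are carried out for a deterministic root with adapted stopping. Only afterwards do we intersect with the event that this root is the hub, and then take a countable union.

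For the deep shells we compute the attachment rate. By \eqref{eq:cavity}, every $u$ with $\dist(u,h)\ge3$ has $v_u\le 8m_t/\gl_t^3$. Hence $\ga v_u/S_t=O(\gl_t^{-4})=O(t^{-4/3})=o(1/t)$ uniformly, and $a_t(u)=(\gb+o(1))/t$ uniformly on these shells. New vertices enter the deep shells only through attachment to $L_{\ge2}$. That entry probability is $\ga(V_2+V_{\ge3})+\gb(|L_{\ge2}|/t)\to \ga\theta/(1+\theta)+\gb=1-q$, using Remark~\ref{rem:constants} and Corollary~\ref{cor:distance}. Therefore
\[
\E_t\gD D_k=(1-q)\ind_{\{k=1\}}+(\gb+o(1))(D_{k-1}-D_k)/t+o(1),
\]
with bounded martingale differences. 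We induct on $k$ with Lemma~\ref{lem:recursion}, taking $a=\gb$. This gives $D_1/t\to(1-q)/(1+\gb)$ and then $D_k/t\to\frac{\gb}{1+\gb}D_{k-1}$-limit, which is $(1-q)\geom_\gb(k)$. The $o(1)\cdot D_k/t$ error terms are absorbed into the forcing term, since $D_k\le t$. When $\gb=0$ the recursion gives $D_1/t\to 1-q$, consistent with $\geom_0$.

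The step needing most care is total-variation convergence. We handle it by a tightness check. The limit $\pi$ is a probability vector, since it is a mixture of two geometric laws with weights $q$ and $1-q$. Coordinatewise convergence of the empirical degree law $N_\cdot(t)/t$ to a probability law then implies convergence in total variation, by the same truncation argument as in Corollary~\ref{cor:distance}. The unconditional statement follows by bounded convergence.

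For the mean, $\sum_k k\pi_k=q(1+b_2)+(1-q)(1+\gb)$. Using $q\theta=\ga\theta/(1+\theta)$, this simplifies to $1+\gb+q\theta$. Since $\gb=1-\ga$, we have $1+\gb+q\theta=2-\ga+\ga\theta/(1+\theta)=2-\ga/(1+\theta)=2-q$. This is only a computation and needs no convergence of means.
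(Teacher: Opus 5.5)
Your argument is correct, but it takes a different route from the paper. The paper never splits $N_k$ by shells. It writes the exact global drift $\E_t\gD N_k=\ind_{\{k=1\}}+\frac{\gb}{t}(N_{k-1}-N_k)+\ga(A_{k-1}-A_k)$ and feeds in the centrality profile $A_k(t)\to\frac{\theta}{1+\theta}\geom_{b_2}(k)$ from Theorem~\ref{thm:profile}. It then applies Lemma~\ref{lem:recursion} with $a=\gb$ inductively. Finally it verifies that the stated mixture solves the resulting recursion, using $(1+b)\geom_b(k)-b\geom_b(k-1)=\ind_{\{k=1\}}$ and $q(b_2-\gb)=\ga\theta/(1+\theta)$.

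You instead separate the second-shell count $B_k$, taken from the proof of Theorem~\ref{thm:profile}, from a deep-shell count $D_k$. Beyond the second shell the centrality part of the attachment rate is $O(\gl_t^{-4})=o(1/t)$, so $D_k$ evolves like a uniform-attachment degree count with rate $\gb/t$ and immigration rate $1-q$.

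The paper's route is shorter and uses only the statement of Theorem~\ref{thm:profile}. It needs neither the deep-shell bound $v_u\le 8m_t/\gl_t^3$ nor the entry-rate limit. Your route needs those two extra inputs, both available from the proof of Theorem~\ref{thm:sharp} and Remark~\ref{rem:constants}. In return it explains the mixture structurally. The component $q\geom_{b_2}$ is exactly the second shell, and $(1-q)\geom_\gb$ is everything at distance at least three, with weights matching Corollary~\ref{cor:distance}. It also gives the degree law shell by shell, with no closed form to guess and verify. A minor simplification: $C_k(t)\le H_t=o(t)$ holds trivially, so you do not need the sharper $o(\gl_t^2)$ bound.
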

\begin{proof}
	With $N_0=A_0=0$, the exact count drift is
	\begin{align*}
		\E_t\gD N_k=\ind_{\{k=1\}}
		+\frac\gb t(N_{k-1}-N_k)+\ga(A_{k-1}-A_k).
	\end{align*}
	The martingale errors are bounded. The preceding theorem and
	Lemma~\ref{lem:recursion} give limits satisfying
	$(1+\gb)\pi_k=\ind_{\{k=1\}}+\gb\pi_{k-1}
		+(\ga\theta/(1+\theta))(\geom_{b_2}(k-1)-\geom_{b_2}(k))$.
	The identity $(1+b)\geom_b(k)-b\geom_b(k-1)=\ind_{\{k=1\}}$ and
	$q(b_2-\gb)=\ga\theta/(1+\theta)$ verify the displayed solution.
	The displayed solution sums to one. To see the total-variation
	conclusion directly, define $\mu_t(k)=N_k(t)/t$. For every integer $k_0\geq1$,
	\begin{align*}
		\sum_{k\geq1}|\mu_t(k)-\pi_k|
		\leq\sum_{k=1}^{k_0}|\mu_t(k)-\pi_k|
		+\mu_t(\{k>k_0\})+\pi(\{k>k_0\}).
	\end{align*}
	First let $t\to\infty$ and then $k_0\to\infty$. The right side tends
	to zero. For the unconditional law, its total-variation distance from
	$\pi$ is at most $\E\abs{\mu_t-\pi}_{\mathrm{TV}}$, which tends to
	zero by bounded convergence.
	Its mean is $1+\gb+\ga\theta/(1+\theta)=2-q$, whereas the finite-tree mean is
	$2-2/t$. The missing first moment $q$ is carried by the $H_t=o(t)$
	first-shell vertices, whose total degree is $H_t+B_t\sim qt$.
\end{proof}

\begin{thm}[Second-largest degree]\label{thm:second}
	Almost surely, $D^{\star\star}_t/\gl_t\to2$.
\end{thm}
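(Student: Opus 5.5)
The plan is to show that the second-largest degree is attained, up to $o(\gl_t)$, by the oldest hub neighbors. Their degrees follow the deterministic profile $2\gc t^{1/3}\bigl(1-(s/t)^{1/3}\bigr)$, where $s$ is the birth time. Every other non-hub vertex has degree $o(\gl_t)$ relative to this.

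\textbf{Upper bound.} Fix a proposed hub $h$, working with the stopping conventions of Section~\ref{sec:sharp}. By~\eqref{eq:cavity} and Theorem~\ref{thm:sharp}, a first-shell vertex $u$ has attachment probability
\begin{align*}
a_t(u)=\frac{\ga m_t}{\gl_t S_t}(1+o(1))+\frac{\gb}{t}=\frac{\ga}{(1+\theta)\gc^2t^{\theta}}(1+o(1)),
\end{align*}
and this holds uniformly in $u$. The compensator of $d_u$ over $[s,t]$ is therefore at most
\begin{align*}
(1+o(1))\frac{\ga}{(1+\theta)\gc^2(1-\theta)}\bigl(t^{1/3}-s^{1/3}\bigr)+O(\log t),
\end{align*}
which is at most $(2\gc+o(1))t^{1/3}$ by Remark~\ref{rem:constants}. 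I apply the first inequality in~\eqref{eq:counting} with $\zeta$ small. It gives $\pr\{N_{s,t}>(1+\delta)\gL_{s,t}+\delta t^{1/3}\}\le e^{-c_\delta t^{1/3}}$ on the joint event with the realized compensator. A union bound over the at most $t$ vertices is summable in $t$. By Borel--Cantelli, $\max_{u\in L_1}d_u(t)\le(2\gc+o(1))t^{1/3}$.

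For vertices beyond the first shell, the attachment rate is at most $(b_2+o(1))/t$. The same exponential counting bound, applied with compensator $O(\log t)$, gives degree $O(\log t)=o(t^{1/3})$; this is exactly the content of Corollary~\ref{cor:hierarchy}. Combined with $\gl_t\sim\gc t^{1/3}$, this gives $\limsup D^{\star\star}_t/\gl_t\le2$.

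\textbf{Lower bound.} Fix rational $\eps>0$. Because $H_t\to\infty$, there is a hub neighbor $u$ born at a finite random time $s$. Its compensator on $[s,t]$ is $(2\gc+o(1))(t^{1/3}-s^{1/3})$, so it grows like $(2\gc+o(1))t^{1/3}$ with fixed $s$. For this fixed label, the martingale has variance $O(t^{1/3})$, and Lemma~\ref{lem:martingale-normalization} with $b_t=t^{1/3}$ (variance summable against $t^{-2/3}$ increments of order $t^{-2/3}$) gives error $o(t^{1/3})$. Hence $d_u(t)\ge(2\gc-\eps)t^{1/3}$ eventually. Since $u\ne h$ and $h$ is eventually the unique maximizer, $D^{\star\star}_t\ge d_u(t)$, and the lower bound follows. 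The random label and birth time are handled by a countable union over deterministic pairs $(h,u)$ with indicators $\ind_{\{u\in L_1^h(\cdot)\}}$, as in the proof of Theorem~\ref{thm:profile}.

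\textbf{Main obstacle.} The hard step is making the upper bound uniform over the roughly $t^{2/3}$ hub neighbors. It requires the rate asymptotic $a_t(u)\sim\ga/((1+\theta)\gl_t^2)$ to hold uniformly along each vertex's whole lifetime, with an $o(1)$ error that is deterministic on a localization region. I would try to get this from the uniform cavity estimate~\eqref{eq:cavity}, which holds once $\max_{u\ne h}d_u\le K\gl_t$; that condition is itself supplied by Theorem~\ref{thm:hub}. Summing the resulting error $o(t^{-\theta})$ contributes only $o(t^{1/3})$, so the exponential tail from~\eqref{eq:counting} then beats the union bound.
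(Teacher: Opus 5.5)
Your proposal is correct and follows essentially the same route as the paper. For the upper limit, a uniform cavity envelope $a_s(u)\le(1+o(1))\frac{\ga}{(1+\theta)\gc^2}s^{-\theta}$ on nonhub vertices is combined with the exponential counting bound \eqref{eq:counting} and a union bound over $u\le t$; for the lower limit, a fixed hub neighbor has compensator asymptotic to $2\gc t^{\theta/2}$.

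The differences are cosmetic. The paper handles vertices beyond the first shell by domination by their first-shell ancestors, not via Corollary~\ref{cor:hierarchy}. It stops at the first failure of the envelope, where you use the realized-compensator form of the upper tail. For the lower bound it uses the second inequality in \eqref{eq:counting}, where you use Lemma~\ref{lem:martingale-normalization}; that lemma needs a predictable gate such as $\ind_{\{a_t(u)\le Ct^{-\theta}\}}$ to make the variance bound deterministic.
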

\begin{proof}
	Put $a=\ga/((1+\theta)\gc^2)$, so that $a/(1-\theta)=2\gc$. Uniformly in $u\ne h$, the cavity estimates give
	$a_t(u)\le(1+o(1))a t^{-\theta}$, with asymptotic equality for every
	neighbor of $h$. To see the uniformity, the first-shell coordinates
	equal $m_t\gl_t^{-1}\{1+O(\gl_t^{-1})\}$ uniformly, and
	every cavity ratio is eventually less than one. Coordinates at
	greater distances are therefore no larger than their first-shell
	ancestor's coordinate. The uniform term $\gb/t$ is
	$o(t^{-\theta})$.
	For fixed labels $h$, a fixed $\eta>0$ and a deterministic $n$, stop
	at the first failure of the envelope $(1+\eta)a s^{-\theta}$ for any
	existing nonhub vertex. Starting at $\max(n,u)$, the stopped selection
	count of each $u\ne h$ has exponential moment at most
	\begin{align*}
		\exp\Big((e^\zeta-1)(1+\eta)2\gc t^{\theta/2}+O(1)\Big).
	\end{align*}
	Its initial degree is at most $n+1$. For any $\eps>0$, choose
	$\eta$ and then $\zeta>0$ small. Markov's inequality and the union
	bound give probability at most $t e^{-c_\eps t^{\theta/2}}$ that any
	nonhub degree exceeds $(2\gc+\eps)t^{\theta/2}$ before failure.
	This is summable. Every successful hub path has a nonfailing $n$ for
	each $\eta$, so the countable union over $h,n$ and then
	$\eps\downarrow0$ gives the upper limit $2\gc$.

	Choose a neighbor $j$ already adjacent to $h$ when $h$ is born,
	or a seed neighbor. Its selection compensator satisfies
	$$\gL_j(t)\sim a\sum_{s<t}s^{-\theta}\sim2\gc t^{\theta/2}.$$
	For each fixed pair of labels $(h,j)$, count selections of $j$ from
	the deterministic time when both labels are present. The counting
	inequality applies to its realized compensator under the original
	attachment law. Only afterward do we intersect with the event that
	$h$ is the hub and $j$ is its neighbor, on which the displayed
	compensator asymptotic holds.
	For small $\zeta>0$, the second exponential inequality in~\eqref{eq:counting} shows that
	the joint event
	$$
    \{N_j(t)<(2\gc-2\eps)t^{\theta/2},
		\gL_j(t)\ge(2\gc-\eps)t^{\theta/2}\}
    $$
	has probability at most $e^{-c_\eps t^{\theta/2}}$.
	Applying the Borel--Cantelli lemma and taking a countable union over fixed pairs $(h,j)$
	gives the lower limit $2\gc$. Finally, $H_t\sim\gl_t^2$ dominates
	all nonhub degrees, so their maximum is eventually $D^{\star\star}_t$.
\end{proof}

\begin{cor}[Degrees beyond the first shell]\label{cor:hierarchy}
	Almost surely,
	\begin{align*}
		\max_{u\,:\,\dist_{T_t}(u,h)\geq2}d_u(t)=O(\log t).
	\end{align*}
	Moreover, simultaneously for every fixed vertex $j$ adjacent to $h$
	in the infinite growing tree,
	\begin{align*}
		{d_j(t)}/{\gl_t}\to 2.
	\end{align*}
\end{cor}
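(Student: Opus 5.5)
The plan has two parts: an exponential-moment bound for vertices beyond the first shell, and a transfer of the sharp compensator asymptotics from the proof of Theorem~\ref{thm:second} to every fixed hub neighbor.

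\textbf{Beyond the first shell.} Fix a proposed root $h$, a starting time $n$ and a rational constant $K$. Stop at the first failure of the bounds from Theorem~\ref{thm:sharp} and of the cavity bound~\eqref{eq:cavity}. Before this stop, every vertex $u$ with $\dist(u,h)\ge2$ satisfies
\begin{align*}
	v_u(t)\le m_t\gl_t^{-2}(1+O(\gl_t^{-1})),\qquad a_t(u)\le\frac{\ga m_t}{\gl_t^2S_t}(1+o(1))+\frac\gb t\le\frac{K}{t}.
\end{align*}
The shell of $u$ is fixed at birth, so the indicator $u\in\bigcup_{j\ge2}L_j^h(u)$ is $\cF_u$-measurable. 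I will then apply~\eqref{eq:counting} with $\zeta=1$ to the stopped selection count of $u$ from its birth. Its compensator is at most $K\log t+O(1)$, which gives
\begin{align*}
	\pr\bigl(d_u(t)>A\log t \text{ before the stop}\bigr)\le C\,t^{(e-1)K-A}.
\end{align*}
Next I take a union bound over $u\le t$ and $A>(e-1)K+2$. The resulting series is summable in $t$, so Borel--Cantelli applies. A countable union over $h$, $n$ and $K$, intersected afterward with the hub event of Theorem~\ref{thm:hub}, gives the $O(\log t)$ bound.

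\textbf{Fixed hub neighbors.} The upper bound $\limsup_t d_j(t)/\gl_t\le2$ is already contained in the uniform upper estimate in the proof of Theorem~\ref{thm:second}. That estimate gives $(2\gc+\eps)t^{\theta/2}$ for all nonhub vertices, and $\gl_t\sim\gc t^{\theta/2}$. For the lower bound, let $j$ be any fixed neighbor of $h$, born at a finite time $t_j$. From $t_j$ on its attachment probability is
\begin{align*}
	a_t(j)\sim\frac{\ga m_t}{\gl_tS_t}\sim\frac{a}{t^\theta},\qquad a=\frac{\ga}{(1+\theta)\gc^2},
\end{align*}
by~\eqref{eq:cavity} and Theorem~\ref{thm:sharp}. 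Its compensator on $[t_j,t]$ is therefore asymptotic to $2\gc t^{\theta/2}$, because the finite starting time is irrelevant. The second inequality of~\eqref{eq:counting}, applied to the joint event involving the realized compensator for the fixed pair $(h,j)$ started at $\max(h,j)$, gives probability at most $e^{-c_\eps t^{\theta/2}}$ for a deficit of $\eps t^{\theta/2}$. Borel--Cantelli and a countable union over pairs of labels then give $\liminf_t d_j(t)/\gl_t\ge2$ simultaneously for all such $j$.

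\textbf{Main obstacle.} The delicate point is the uniform bound $a_t(u)=O(1/t)$ for \emph{all} deeper vertices, including those at distance $\ge3$. There, $v_u\le v_{p(u)}\cdot r_u$ with $r_u\le(\gl_t-2K)^{-1}<1$, so these coordinates are dominated by their second-shell ancestor. This uses $\max_{u\ne h}d_u(t)=O(\gl_t)$ from Theorem~\ref{thm:hub}. Since that bound is only eventual with a random constant, it must be enforced as a stopped gate rather than conditioned on.
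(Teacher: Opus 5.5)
Your proposal is correct and takes essentially the paper's route. It uses a stopped $C/s$ envelope for all vertices at distance at least two, with deeper vertices dominated by their second-shell ancestor because cavity ratios are eventually below one, followed by exponential moments, a union bound and Borel--Cantelli for the $O(\log t)$ bound, and for the neighbor asymptotics it applies the counting inequalities~\eqref{eq:counting} to the realized compensator of each fixed pair $(h,j)$, intersecting with the hub event only afterward. The one small difference is that you take the upper bound for a fixed neighbor from Theorem~\ref{thm:second} (indeed $d_j(t)\le D^{\star\star}_t$ eventually already suffices), whereas the paper proves both tails directly for each pair from~\eqref{eq:counting}.
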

\begin{proof}
\smallskip\noindent\textbf{A uniform envelope beyond the first shell.}
	By Theorem~\ref{thm:sharp} and~\eqref{eq:cavity}, uniformly for
	$u\in L_2(t)$, we have
	\begin{align*}
		\frac{\ga v_u(t)}{S_t}
		=\frac{\ga}{(1+\theta)\gl_t^3}(1+o(1))
		=\frac{\theta+o(1)}t.
	\end{align*}
	Every cavity ratio is eventually less than one, so a vertex farther
	from $h$ has centrality no greater than that of its second-shell
	ancestor. Adding the uniform attachment term gives
	\begin{align}\label{eq:far-envelope}
		a_t(u)\leq(b_2+o(1))/t
		\text{ uniformly for }
        \dist_{T_t}(u,h)\geq2.
	\end{align}

\smallskip\noindent\textbf{An adapted logarithmic bound.}
	Fix a deterministic proposed root $h$, a rational constant $C>b_2$,
	and an integer $n\geq\max\{t_0,h\}$. Let $\tau$ be the first
	$s\geq n$ at which an existing vertex at distance at least two from
	$h$ has $a_s(u)>C/s$. This is a stopping time. For a deterministic
	label $u$, define
	\begin{align*}
		X_t(u)=\sum_{s=n}^{t-1}
		\ind_{\{s<\tau\}}\ind_{\{u\leq s\}}
		\ind_{\{\dist_{T_s}(u,h)\geq2\}}
		\ind_{\{J_{s+1}=u\}},
	\end{align*}
	where the distance indicator is taken to be zero when $u>s$.
	Each increment has conditional mean at most $C/s$. The one-step
	exponential bound underlying~\eqref{eq:counting} therefore yields,
	for every $\zeta>0$,
	\begin{align*}
		\E e^{\zeta X_t(u)}
		&\leq\exp\left(C(e^\zeta-1)
		\sum_{s=n}^{t-1}\frac1s\right)
		\leq K_{n,C,\zeta}\,t^{C(e^\zeta-1)},
	\end{align*}
	uniformly over labels $u\leq t$. Markov's inequality and a union
	bound give
	\begin{align*}
		\pr\left\{\max_{u\leq t}X_t(u)>A\log t\right\}
		\leq K_{n,C,\zeta}\,
		t^{1+C(e^\zeta-1)-\zeta A}.
	\end{align*}
	Choose $A$ with $\zeta A>C(e^\zeta-1)+2$. The bound is summable
	in $t$, so Borel--Cantelli lemma gives $\max_{u\leq t}X_t(u)=O(\log t)$
	almost surely. On $\{\tau=\infty\}$, every vertex at distance at
	least two has degree at most $n+X_t(u)$: older vertices have degree
	at most $n-1$ at time $n$, while later vertices have degree one at
	birth, and their distance from $h$ is then fixed.

	Take a countable intersection over the deterministic labels $h$,
	starting times $n$, and rational constants $C>b_2$. On every path
	where $h$ is the persistent hub,~\eqref{eq:far-envelope} ensures a
	starting time for which $\tau=\infty$. This proves the logarithmic
	bound under the original law, without conditioning a transition
	probability on eventual hub success. The argument includes $\gb=0$,
	when $b_2=\theta$.

\smallskip\noindent\textbf{The degree of each fixed hub neighbor.}
	Fix deterministic labels $(h,j)$ with $h\ne j$ and put
	$n_{h,j}=\max\{t_0,h,j\}$. Count selections and their compensator
	over exactly the same interval
	\begin{align*}
		N_j(t)&=\sum_{s=n_{h,j}}^{t-1}\ind_{\{J_{s+1}=j\}},\qquad
		\gL_j(t)=\sum_{s=n_{h,j}}^{t-1}a_s(j).
	\end{align*}
	On the event that $h$ is the persistent hub and $j$ is its neighbor,
	the cavity estimates and sharp scalar limits imply
	\begin{align*}
		a_s(j)&\sim
		\frac{\ga}{(1+\theta)\gc^2}s^{-\theta},\qquad
		\gL_j(t)\sim
		\frac{\ga}{(1+\theta)\gc^2(1-\theta)}t^{1-\theta}
		=2\gc t^{\theta/2}.
	\end{align*}
	Here $\gb/s=o(s^{-\theta})$, including when $\gb=0$.
	For $0<\eps<\gc$ and a sufficiently small $\zeta>0$,
	\eqref{eq:counting} bounds each of the joint events
	\begin{align*}
		\big\{N_j(t)&<(2\gc-2\eps)t^{\theta/2},
		\gL_j(t)\geq(2\gc-\eps)t^{\theta/2}\big\},\quad 
		\big\{N_j(t)>(2\gc+2\eps)t^{\theta/2},
		\gL_j(t)\leq(2\gc+\eps)t^{\theta/2}\big\}
	\end{align*}
	by $e^{-c_\eps t^{\theta/2}}$ for some $c_\eps>0$.
	Indeed, the exponents in the lower- and upper-tail bounds have
	respective positive coefficients
	\begin{align*}
		&(1-e^{-\zeta})(2\gc-\eps)
		-\zeta(2\gc-2\eps),\text{ and }
		\zeta(2\gc+2\eps)
		-(e^\zeta-1)(2\gc+\eps),
	\end{align*}
	each equal to $\zeta\eps+O(\zeta^2)$ as $\zeta\downarrow0$.
	Both bounds are summable. Borel--Cantelli lemma is applied before
	intersecting with the hub/neighbor event, on which the compensator
	asymptotic holds. Taking a countable intersection over fixed pairs
	and rational $\eps\downarrow0$ gives
	$N_j(t)/t^{\theta/2}\to2\gc$ for every fixed hub neighbor.
	Since $d_j(t)=d_j(n_{h,j})+N_j(t)$ and
	$\gl_t\sim\gc t^{\theta/2}$, this proves the second assertion.

	Finally, $h$ has a neighbor with a fixed finite label. Its degree is
	asymptotic to $2\gl_t$, whereas $\gl_t\gg\log t$. Together with
	the logarithmic bound, this shows that every nonhub degree maximizer
	eventually belongs to $L_1(t)$. 
\end{proof}

\bibliographystyle{alpha}
\bibliography{epan.bib}

@article{Bonacich1972,
  author={Bonacich, Phillip},
  title={Factoring and weighting approaches to status scores and clique identification},
  journal={The Journal of Mathematical Sociology},
  volume={2}, number={1}, pages={113--120}, year={1972},
  doi={10.1080/0022250X.1972.9989806},
  note={\href{https://doi.org/10.1080/0022250X.1972.9989806}{doi:10.1080/0022250X.1972.9989806}}
}

@article{BarabasiAlbert1999,
  author={Barab{\'a}si, Albert-L{\'a}szl{\'o} and Albert, R{\'e}ka},
  title={Emergence of Scaling in Random Networks},
  journal={Science}, volume={286}, number={5439},
  pages={509--512}, year={1999},
  doi={10.1126/science.286.5439.509},
  note={\href{https://doi.org/10.1126/science.286.5439.509}{doi:10.1126/science.286.5439.509}}
}

@article{Adami2024,
  author={Adami, Vadood and Ebadi, Zahra and Nattagh-Najafi, Morteza},
  title={A dandelion structure of eigenvector preferential attachment networks},
  journal={Scientific Reports}, volume={14}, pages={16994}, year={2024},
  doi={10.1038/s41598-024-67896-9},
  note={\href{https://doi.org/10.1038/s41598-024-67896-9}{doi:10.1038/s41598-024-67896-9}}
}

@article{Galashin2016,
  author={Galashin, Pavel},
  title={Existence of a persistent hub in the convex preferential attachment model},
  journal={Probability and Mathematical Statistics},
  volume={36}, number={1}, pages={59--74}, year={2016},
  eprint={1310.7513}, archivePrefix={arXiv},
  url={https://arxiv.org/abs/1310.7513},
  note={\href{https://arxiv.org/abs/1310.7513}{arXiv:1310.7513}}
}

@article{BanerjeeBhamidi2021,
  author={Banerjee, Sayan and Bhamidi, Shankar},
  title={Persistence of hubs in growing random networks},
  journal={Probability Theory and Related Fields},
  volume={180}, number={3--4}, pages={891--953}, year={2021},
  doi={10.1007/s00440-021-01066-0},
  note={\href{https://doi.org/10.1007/s00440-021-01066-0}{doi:10.1007/s00440-021-01066-0}}
}

@article{MartinZhangNewman2014,
  author = {Martin, Travis and Zhang, Xiao and Newman, M. E. J.},
  title = {Localization and centrality in networks},
  journal = {Physical Review E},
  volume = {90}, number = {5}, pages = {052808}, year = {2014},
  doi = {10.1103/PhysRevE.90.052808},
  url = {https://arxiv.org/abs/1401.5093},
  note = {\href{https://doi.org/10.1103/PhysRevE.90.052808}{doi:10.1103/PhysRevE.90.052808}}
}

@article{Pemantle1990,
  author = {Pemantle, Robin},
  title = {Nonconvergence to Unstable Points in Urn Models and Stochastic Approximations},
  journal = {The Annals of Probability},
  volume = {18}, number = {2}, pages = {698--712}, year = {1990},
  doi = {10.1214/aop/1176990853},
  url = {https://www2.math.upenn.edu/~pemantle/papers/nonconvergence.pdf},
  note = {\href{https://doi.org/10.1214/aop/1176990853}{doi:10.1214/aop/1176990853}}
}

@article{Benaim1999,
  author = {Bena{\"i}m, Michel},
  title = {Dynamics of stochastic approximation algorithms},
  journal = {S{\'e}minaire de Probabilit{\'e}s},
  volume = {33}, pages = {1--68}, year = {1999},
  url = {https://numdam.org/item/SPS_1999__33__1_0/},
  note = {\href{https://numdam.org/item/SPS_1999__33__1_0/}{Numdam record}; Lecture Notes in Mathematics 1709}
}

@article{JogLoh2018,
  author = {Jog, Varun and Loh, Po-Ling},
  title = {Persistence of centrality in random growing trees},
  journal = {Random Structures \& Algorithms},
  volume = {52}, number = {1}, pages = {136--157}, year = {2018},
  doi = {10.1002/rsa.20726},
  url = {https://arxiv.org/abs/1511.01975},
  note = {\href{https://doi.org/10.1002/rsa.20726}{doi:10.1002/rsa.20726}; first published online in 2017}
}

@article{RudasTothValko2007,
  author = {Rudas, Anna and T{\'o}th, B{\'a}lint and Valk{\'o}, Benedek},
  title = {Random trees and general branching processes},
  journal = {Random Structures \& Algorithms},
  volume = {31}, number = {2}, pages = {186--202}, year = {2007},
  doi = {10.1002/rsa.20137},
  url = {https://arxiv.org/abs/math/0503728},
  note = {\href{https://doi.org/10.1002/rsa.20137}{doi:10.1002/rsa.20137}}
}

@book{Williams1991,
  author = {Williams, David},
  title = {Probability with Martingales},
  publisher = {Cambridge University Press},
  series = {Cambridge Mathematical Textbooks},
  year = {1991},
  isbn = {9780521406055},
  doi = {10.1017/CBO9780511813658},
  note = {\href{https://doi.org/10.1017/CBO9780511813658}{doi:10.1017/CBO9780511813658}}
}

@article{BanerjeeBhamidi2022,
  author = {Banerjee, Sayan and Bhamidi, Shankar},
  title = {Root finding algorithms and persistence of {Jordan} centrality in growing random trees},
  journal = {The Annals of Applied Probability},
  volume = {32}, number = {3}, pages = {2180--2210}, year = {2022},
  doi = {10.1214/21-AAP1731},
  url = {https://arxiv.org/abs/2006.15609},
  note = {\href{https://doi.org/10.1214/21-AAP1731}{doi:10.1214/21-AAP1731}}
}
\end{document}